\definecolor{gray}{gray}{0.4}
\renewcommand{\k}{\kappa}
\renewcommand{\(}{\left\(}
\renewcommand{\)}{\right\)}
\renewcommand{\[}{\left\[}
\renewcommand{\]}{\right\]}
\renewcommand{\i}{\infty}
\numberwithin{equation}{section}
\theoremstyle{plain}
\newtheorem{theorem}{Theorem}[section]
\newtheorem{lemma}[theorem]{Lemma}
\newtheorem{remark}[]{Remark}
\newtheorem{corollary}[theorem]{Corollary}
\def\proof{\@ifnextchar[{\@oproof}{\@nproof}}
\def\@oproof[#1][#2]{\trivlist\item[\hskip\labelsep\textit{#2 Proof of\
		#1.}~]\ignorespaces}
\def\@nproof{\trivlist\item[\hskip\labelsep\textit{Proof.}~]\ignorespaces}
\begin{document}
	\title[A finite analogue of a $q$-series identity]{A finite analogue of a $q$-series identity of Bhoria, Eyyunni and Maji and its applications} 
	
	\author{Atul Dixit and Khushbu Patel}\thanks{2020 \textit{Mathematics Subject Classification.} Primary 11P81, 33D15; Secondary 11P84, 05A17.\\
		\textit{Keywords and phrases.} partitions, finite analogues, smallest parts function, rank, crank.}
	\address{Discipline of Mathematics, Indian Institute of Technology Gandhinagar, Palaj, Gandhinagar 382355, Gujarat, India} 
	\email{adixit@iitgn.ac.in; khushbup@iitgn.ac.in}
	\begin{abstract}
		Bhoria, Eyyunni and Maji recently obtained a four-parameter $q$-series identity which gives as special cases not only all five entries of Ramanujan on pages 354 and 355 of his second notebook but also allows them to obtain an analytical proof of a result of Bressoud and Subbarao. Here, we obtain a finite analogue of their identity which naturally gives finite analogues of Ramanujan's results. Using one of these finite analogues, we deduce an identity for a finite sum involving a ${}_2\phi_1$. This identity is then applied to obtain a generalization of the generating function version of Andrews' famous identity for the smallest parts function $\textup{spt}(n)$. The $q$-series which generalizes $\sum_{n=1}^{\infty}\textup{spt}(n)q^n$ is completely different from $S(z, q)$ considered by Andrews, Garvan and Liang. Further applications of our identity are given. Lastly we generalize a result of Andrews, Chan and Kim which involves the first odd moments of rank and crank. 
	\end{abstract}
\maketitle
\section{Introduction}\label{intro}
In \cite[p.~354]{ramanujanoriginalnotebook2}, \cite[p.~263, Entry 3]{bcbramforthnote}, Ramanujan gave the following beautiful identity for $a\neq 0, |a|<1,$ and $|b|<1$, namely,
\begin{align}\label{neglected}
	\sum_{n=1}^{\infty} \frac{ (b/a)_n a^n }{ (1- q^n) (b)_n } = \sum_{n=1}^{\infty} \frac{a^n - b^n }{ 1- q^n },
\end{align}
where, here and throughout the paper, we have used the standard $q$-series notation:
\begin{align*}
	(a)_0 &:=(a;q)_0 =1, \qquad \\
	(a)_n &:=(a;q)_n  = (1-a)(1-aq)\cdots(1-aq^{n-1}),
	\qquad n \geq 1, \\
	(a)_{\infty} &:=(a;q)_{\i}  = \lim_{n\to\i}(a;q)_n, \qquad |q|<1.
\end{align*}
Also, we will always consider $q\in\mathbb{C}$ such that $|q|<1$.

Maji and the first author \cite{dixitmaji18} obtained the following generalization of \eqref{neglected}:
\begin{theorem}\label{gen of Ramanujan's identity}
	Let $a, b, c$ be three complex numbers such that $|a|<1$ and $|cq|< 1$. Then
	\begin{align}\label{entry3gen}
		\sum_{n =1}^{\infty}  \frac{ (b/a)_n a^n }{ (1- c q^n) (b)_n } =  \sum_{m=0}^{\infty}\frac{(b/c)_mc^m}{(b)_m}\left(\frac{aq^m}{1-aq^m}-\frac{bq^m}{1-bq^m}\right).
	\end{align}
	Moreover, for $|a|<1$ and $|b|<\min(|c|,1)$,
	\begin{align*}
		\sum_{n =1}^{\infty}  \frac{ (b/a)_n a^n }{ (1- c q^n) (b)_n }=\frac{ ( b/c )_{\infty }}{(b)_{\infty} }   \sum_{n=0}^{\infty} \frac{(c)_n ( b/c)^n }{(q)_n}    \sum_{m=1}^{\infty} \frac{a^m - b^m }{1- c q^{m+n}  }.
	\end{align*}
\end{theorem}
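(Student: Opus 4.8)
The plan is to prove the first identity \eqref{entry3gen} by transforming its right-hand side into its left-hand side, using only Ramanujan's \eqref{neglected}, Abel summation, and the $q$-Gauss sum, and then to deduce the second identity from \eqref{entry3gen} by one application of the $q$-binomial theorem. For \eqref{entry3gen} I may assume $|b|<1$, both sides being meromorphic in $b$ with poles only at $b=q^{-k}$, so that the general case follows by analytic continuation. Put $\gamma_m:=(b/c)_m c^m/(b)_m$ and
$$\delta_m:=\frac{aq^m}{1-aq^m}-\frac{bq^m}{1-bq^m}=\frac{(a-b)q^m}{(1-aq^m)(1-bq^m)},$$
so that the right-hand side of \eqref{entry3gen} is $\sum_{m\ge 0}\gamma_m\delta_m$, which converges absolutely when $|cq|<1$ because $\delta_m=O(q^m)$ and $\gamma_m=O(c^m)$. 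Introduce the tails $\Delta_r:=\sum_{m\ge r}\delta_m=O(q^r)$; Abel summation then yields $\sum_{m\ge 0}\gamma_m\delta_m=\sum_{r\ge 0}(\gamma_r-\gamma_{r-1})\Delta_r$ with $\gamma_{-1}:=0$, the boundary term vanishing since $|cq|<1$. The point of passing to tails is that $\Delta_r$ is computable from \eqref{neglected}: since $\sum_{m\ge 0}\delta_m=\sum_{j\ge 1}(a^j-b^j)/(1-q^j)$, replacing $(a,b)$ by $(aq^r,bq^r)$ in \eqref{neglected} and using $(bq^r;q)_n=(b)_{r+n}/(b)_r$ gives
$$\Delta_r=(b)_r\sum_{n\ge 1}\frac{(b/a)_n a^n q^{rn}}{(1-q^n)(b)_{r+n}}.$$

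I would then simplify the coefficient: a short computation with the $q$-Pochhammer recurrences shows that $(\gamma_r-\gamma_{r-1})(b)_r$ equals $1$ for $r=0$ and $(c-1)c^{r-1}(b/c)_{r-1}$ for $r\ge 1$. Substituting and interchanging the order of summation turns the right-hand side of \eqref{entry3gen} into
$$\sum_{n\ge 1}\frac{(b/a)_n a^n}{1-q^n}\left(\frac{1}{(b)_n}+(c-1)q^n\sum_{s\ge 0}\frac{(b/c)_s(cq^n)^s}{(b)_{n+1+s}}\right).$$
The crux is the inner $s$-sum. Using $(b)_{n+1+s}=(b)_{n+1}\,(bq^{n+1};q)_s$, it equals $(b)_{n+1}^{-1}\,{}_2\phi_1(b/c,q;bq^{n+1};q,cq^n)$, and the argument $cq^n$ is precisely $C/(AB)$ for $A=b/c$, $B=q$, $C=bq^{n+1}$, so the $q$-Gauss sum applies:
$${}_2\phi_1(b/c,q;bq^{n+1};q,cq^n)=\frac{(cq^{n+1};q)_\infty\,(bq^n;q)_\infty}{(bq^{n+1};q)_\infty\,(cq^n;q)_\infty}=\frac{1-bq^n}{1-cq^n}.$$
Feeding this back and using $(b)_{n+1}=(b)_n(1-bq^n)$, the inner $s$-sum equals $\frac{1}{(b)_n(1-cq^n)}$, so the parenthesis collapses to $\frac{1-q^n}{(b)_n(1-cq^n)}$, and the whole expression becomes $\sum_{n\ge 1}\frac{(b/a)_n a^n}{(1-cq^n)(b)_n}$, the left-hand side of \eqref{entry3gen}. (If $c=1$ the identity is trivial and $a=b$ makes both sides vanish, so one may take $c\ne 1$, $a\ne b$.)

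To obtain the second identity, I would feed \eqref{entry3gen} through the $q$-binomial theorem: since $\frac{(b/c)_m}{(b)_m}=\frac{(b/c)_\infty}{(b)_\infty}\cdot\frac{(bq^m;q)_\infty}{(bq^m/c;q)_\infty}=\frac{(b/c)_\infty}{(b)_\infty}\sum_{n\ge 0}\frac{(c)_n}{(q)_n}(b/c)^n q^{mn}$ for $|b|<|c|$, substituting this into the right-hand side of \eqref{entry3gen}, interchanging the $m$- and $n$-summations, expanding $\delta_m=\sum_{j\ge 1}(a^j-b^j)q^{jm}$, and summing $\sum_{m\ge 0}(cq^{n+j})^m=(1-cq^{n+j})^{-1}$, the right-hand side of \eqref{entry3gen} becomes $\frac{(b/c)_\infty}{(b)_\infty}\sum_{n\ge 0}\frac{(c)_n(b/c)^n}{(q)_n}\sum_{j\ge 1}\frac{a^j-b^j}{1-cq^{n+j}}$, which with \eqref{entry3gen} is the assertion. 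I expect the only genuine obstacle to be the rigorous justification of the rearrangements—the Abel summation and the two interchanges of double series—which is exactly where the hypotheses $|a|<1$, $|cq|<1$ (and $|b|<\min(|c|,1)$ for the second identity) are used; the algebra itself is carried entirely by the single $q$-Gauss evaluation with argument $cq^n$.
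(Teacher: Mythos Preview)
Your argument is correct. The Abel-summation step works because $\gamma_M\Delta_{M+1}=O((cq)^M)\to 0$; the evaluation of the tail $\Delta_r$ by shifting $(a,b)\mapsto(aq^r,bq^r)$ in \eqref{neglected} is valid; the computation $(\gamma_r-\gamma_{r-1})(b)_r=(c-1)c^{r-1}(b/c)_{r-1}$ is right; and the inner $s$-sum is indeed a ${}_2\phi_1(b/c,q;bq^{n+1};q,cq^n)$ because the $(q)_s$ from the upper parameter cancels the $(q)_s$ in the basic-series denominator, after which $q$-Gauss applies with exactly the right argument. The derivation of the second identity by inserting the $q$-binomial expansion of $(bq^m)_\infty/(bq^m/c)_\infty$ into the right side of \eqref{entry3gen} is also fine under $|b|<\min(|c|,1)$.

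Your route differs substantially from what the paper does. In this paper Theorem~\ref{gen of Ramanujan's identity} is quoted from \cite{dixitmaji18} without proof; the paper's own contribution is the finite analogue (Theorem~\ref{fafour}, specialising to Theorem~\ref{finmainabc} at $d=1$), proved by a single application of Sears' ${}_4\phi_3$ transformation followed by elementary rewriting, and \eqref{entry3gen} is then recovered by letting $N\to\infty$. That approach is a top-down identity between terminating series and yields the stronger finite statement for free, but it imports a fairly heavy transformation. Your proof is bottom-up: it bootstraps from the $c=1$ case \eqref{neglected} via Abel summation and closes with a $q$-Gauss evaluation, so it is more self-contained and makes transparent exactly how the extra parameter $c$ enters (through the telescoping factor $(c-1)q^n/(1-cq^n)$). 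Moreover, the paper says nothing about the second (double-sum) identity, which you obtain cleanly from \eqref{entry3gen} by one $q$-binomial expansion; that part of your argument has no counterpart here.
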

Theorem \ref{gen of Ramanujan's identity} has many nice implications in partition theory which are discussed in \cite{dixitmaji18}. These include, in particular,  a new proof of the generating function version of Andrews' famous identity
\begin{equation*}
	\textup{spt}(n)=np(n)-\frac{1}{2}N_2(n).
\end{equation*}
Clearly, Ramanujan's identity \eqref{neglected} is the special case $c=1$ of \eqref{entry3gen}.

In a recent work \cite{dems}, Eyyunni, Maji, Sood and the first author obtained a finite analogue of Theorem \ref{gen of Ramanujan's identity} given below.
\begin{theorem}\label{finmainabc}
	Let $N\in\mathbb{N}$. For $a, b, c\neq q^{-n}, 1\leq n\leq N-1$, $c\neq q^{-N}$, and $a, b\neq 1$,
	\begin{align*}
		\sum_{n=1}^{N}\left[\begin{matrix} N\\n\end{matrix}\right]\frac{(\frac{b}{a})_{n}(q)_{n}(a)_{N-n}a^{n}}{(1-cq^{n})(b)_n(a)_N}=
		\sum_{n=1}^{N}\left[\begin{matrix} N\\n\end{matrix}\right]\frac{(\frac{b}{c})_{n-1}(q)_n (cq)_{N-n}c^{n-1}}{(b)_{n-1}(cq)_N}\left(\frac{aq^{n-1}}{1-aq^{n-1}}-\frac{bq^{n-1}}{1-bq^{n-1}}\right),
	\end{align*}
	where
	\begin{align*}
		\left[\begin{matrix} N\\n\end{matrix}\right]=\left[\begin{matrix} N\\n\end{matrix}\right]_q :=\begin{cases}
			\frac{(q;q)_N}{(q;q)_n (q;q)_{N-n}},\hspace{2mm}\text{if}\hspace{1mm}0\leq n\leq N,\\
			0,\hspace{2mm}\text{otherwise},
		\end{cases} 
	\end{align*}
	is the $q$-binomial coefficient.
\end{theorem}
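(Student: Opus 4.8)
The plan is to treat both sides as rational functions of the parameter $c$ (for fixed $N$ and generic $a,b$) and to match them by partial fractions. On the left each summand is a constant, in $c$, times $1/(1-cq^{n})$, so the left side is a sum of simple partial fractions with poles among $c=q^{-1},\dots,q^{-N}$, and it vanishes as $c\to\infty$. On the right one first replaces $\frac{aq^{n-1}}{1-aq^{n-1}}-\frac{bq^{n-1}}{1-bq^{n-1}}$ by $\frac{(a-b)q^{n-1}}{(1-aq^{n-1})(1-bq^{n-1})}$ and cancels $(cq)_{N-n}$ against $(cq)_{N}$, so that the $c$-dependence of the $n$-th summand is $\big(\prod_{i=0}^{n-2}(c-bq^{i})\big)\big/\prod_{j=N-n+1}^{N}(1-cq^{j})$; this is again a proper rational function of $c$ with simple poles among $c=q^{-1},\dots,q^{-N}$ and tending to $0$ as $c\to\infty$. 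Since a proper rational function vanishing at infinity is the sum of its principal parts, it is enough to check that the two sides agree after multiplying by $1-cq^{m}$ and setting $c=q^{-m}$, for each $m\in\{1,\dots,N\}$.

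Carrying this out, on the left only the term $n=m$ survives, leaving
\begin{equation*}
\left[\begin{matrix}N\\m\end{matrix}\right]\frac{(b/a)_{m}(q)_{m}(a)_{N-m}a^{m}}{(b)_{m}(a)_{N}} .
\end{equation*}
On the right, the $n$-th summand has a pole at $c=q^{-m}$ exactly when $N-m+1\le n\le N$, so the residue there is a \emph{finite} sum over these $m$ values of $n$. Evaluating $(b/c)_{n-1}c^{n-1}=\prod_{i=0}^{n-2}(c-bq^{i})$ at $c=q^{-m}$, rewriting the leftover product $\prod_{j\neq m}(1-q^{j-m})$ as $(q)_{N-m}$ times a product of the shape $\prod_{r=1}^{p-1}(1-q^{-r})=(-1)^{p-1}q^{-\binom{p}{2}}(q)_{p-1}$ (with $p=m-N+n$), and using the elementary $q$-binomial identities $\left[\begin{matrix}N\\n\end{matrix}\right](q)_{n}=(q)_{N}/(q)_{N-n}$ and $(x)_{N-n}=(x)_{N}/(xq^{N-n})_{n}$, this residue collapses to a single terminating $q$-hypergeometric series in the summation index.

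The main obstacle is precisely this last step: one must recognize the terminating series and sum it in closed form, checking that it equals the left-hand residue above. I expect it to be a direct consequence of a classical terminating summation — most likely the $q$-Pfaff--Saalsch\"utz sum, or, depending on how the three parameters $a$, $b$, $q^{-m}$ get distributed, the $q$-Chu--Vandermonde sum — so the genuine work is the bookkeeping needed to cast the residue in the exact form of a known evaluation. An alternative, possibly smoother, route is induction on $N$: the base case $N=1$ reduces both sides to $\frac{(1-q)(a-b)}{(1-cq)(1-a)(1-b)}$ by a one-line computation, and the inductive step is driven by the $q$-Pascal recurrence $\left[\begin{matrix}N\\n\end{matrix}\right]=\left[\begin{matrix}N-1\\n\end{matrix}\right]+q^{N-n}\left[\begin{matrix}N-1\\n-1\end{matrix}\right]$; the delicate point there is keeping track of how the ratios $(a)_{N-n}/(a)_{N}$ and $(cq)_{N-n}/(cq)_{N}$ transform when $N$ is decreased by one. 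In either approach, a reassuring consistency check is that letting $N\to\infty$ — so that $\left[\begin{matrix}N\\n\end{matrix}\right](q)_{n}\to1$ and $(a)_{N-n}/(a)_{N}$, $(cq)_{N-n}/(cq)_{N}\to1$ — returns the identity \eqref{entry3gen} of Theorem \ref{gen of Ramanujan's identity}.
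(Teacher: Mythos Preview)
The paper does not give its own proof of Theorem~\ref{finmainabc}; it is quoted from \cite{dems}. What the paper \emph{does} prove is the four-parameter Theorem~\ref{fafour}, of which Theorem~\ref{finmainabc} is the $d=1$ specialization (divide \eqref{eq:9} by $1-c$ and simplify $(c)_n/(cq)_n=(1-c)/(1-cq^n)$). The paper's argument for Theorem~\ref{fafour} is a one-shot substitution into Sears' balanced ${}_4\phi_3$ transformation \cite[(3.2.1)]{gasperrahman} followed by elementary rewriting via \eqref{ef}--\eqref{eq:11}. So the benchmark here is: invoke a known ${}_4\phi_3$ transformation and specialize.

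Your route is genuinely different --- partial fractions in $c$ --- and the set-up is sound: both sides are proper rational in $c$ with simple poles contained in $\{q^{-1},\dots,q^{-N}\}$ and vanish as $c\to\infty$, so equality is equivalent to matching the $N$ residues. The left residue at $c=q^{-m}$ is the single product you wrote down; the right residue is an $m$-term sum you correctly isolate.

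The gap is that you stop precisely at the point where the proof has to happen. Saying the residue sum ``collapses'' and ``I expect it to be $q$-Pfaff--Saalsch\"utz or $q$-Chu--Vandermonde'' is not a proof; neither is the induction alternative, for which you only write down the base case. In fact your expectation is reasonable --- since the full identity is a rewritten balanced ${}_4\phi_3$ transformation, freezing $c=q^{-m}$ should leave a balanced terminating ${}_3\phi_2$, i.e.\ exactly $q$-Pfaff--Saalsch\"utz --- but you have to \emph{exhibit} the match: after reindexing by $p=n-(N-m)$, organize the $p$-sum so that its parameters are visibly $(q^{-m},\ast,\ast;\ast,\ast)$ satisfying the balance condition, and then read off the product side and check it equals $\left[\begin{smallmatrix}N\\m\end{smallmatrix}\right](b/a)_m(q)_m(a)_{N-m}a^{m}/\bigl((b)_m(a)_N\bigr)$. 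Until that identification is written out, what you have is a strategy, not a proof; and note that the ``bookkeeping'' you defer is, in effect, re-proving a summation of the same strength as the transformation the paper simply quotes.
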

Letting $N\to\infty$ in the above theorem leads to \eqref{entry3gen}. The finite analogue is a generalization of \eqref{entry3gen}, for, it is valid for \emph{any} natural number $N$ apart from being valid in the limiting case $N\to\infty$.  A finite analogue is desirable whenever possible since it can be extended to the level of elliptic hypergeometric series. Elliptic extensions are essentially always finite, balanced and well-poised. Only finite analogues have chance to be extended to the elliptic setting.

It was shown in \cite{dems} that Theorem \ref{finmainabc} has many fruitful consequences in the theory of partitions including a representation for $\textup{spt}(n, N)$, the number of smallest parts in all partitions of $n$ whose corresponding largest parts are less than or equal to $N$, in terms of the restricted partition function $p(n, N)$, which counts the number of partitions of $n$ whose largest parts do not exceed $N$, and a finite analogue of the second Atkin-Garvan rank moment. See Theorem 2.4 of \cite{dems}. The work in \cite{dems} further extended the sparsely developed theory of $p(n, N)$. Note that $\lim_{N\to\infty}p(n, N)=p(n)$, the number of unrestricted partitions of a positive integer $n$.

Very recently, Bhoria, Eyyunni and Maji \cite[Theorem 2.1]{bem} further generalized Theorem \ref{gen of Ramanujan's identity} by obtaining the following result.
\begin{theorem}\label{Generalization of Dixit-Maji}
	Let $a, b, c, d$  be four complex numbers such that $|ad|<1$ and $|cq|<1$. Then
	\begin{equation*}
		\sum_{n = 1}^{\infty}  \frac{ (b/a)_n (c/d)_n (ad)^n }{ (b)_n (cq)_n } = 
		\frac{(a-b)(d-c)}{(ad-b)}\sum_{m=0}^{\infty}\frac{(a)_m (bd/c)_m c^m}{(b)_m (ad)_m}
		\left(\frac{adq^m}{1-adq^m}-\frac{bq^m}{1-bq^m}\right).
	\end{equation*}
\end{theorem}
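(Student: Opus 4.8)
The plan is to restore the trivial $n=0$ term of the series on the left, recognize the resulting object as a ${}_3\phi_2$ whose argument sits at the special value where a classical transformation applies, use that transformation, and finish by a one‑line telescoping. After multiplying the $n$‑th summand by $(q)_n/(q)_n$, the left‑hand side together with its $n=0$ term (which equals $1$) becomes
$$\sum_{n=0}^{\infty}\frac{(b/a)_n(c/d)_n(q)_n}{(q)_n(b)_n(cq)_n}(ad)^n={}_3\phi_2\!\left(\begin{matrix}b/a,\,c/d,\,q\\ b,\,cq\end{matrix};\,q,\,ad\right),$$
and the key point is that the argument $ad$ satisfies $ad=\frac{b\cdot cq}{q\cdot(b/a)\cdot(c/d)}$, i.e.\ it is exactly the special value at which one of the transformation formulas for ${}_3\phi_2$ series collected in Appendix III of Gasper and Rahman's \emph{Basic Hypergeometric Series} is available. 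Applying it with the numerator parameter $q$ in the distinguished slot and collapsing the infinite‑product prefactor via $\frac{(c;q)_{\infty}(adq;q)_{\infty}}{(cq;q)_{\infty}(ad;q)_{\infty}}=\frac{1-c}{1-ad}$, one obtains
$$1+\sum_{n=1}^{\infty}\frac{(b/a)_n(c/d)_n(ad)^n}{(b)_n(cq)_n}=\frac{1-c}{1-ad}\sum_{m=0}^{\infty}\frac{(a)_m(bd/c)_m\,c^m}{(b)_m(adq)_m},$$
valid for $|ad|<1$ and $|c|<1$ (the full range $|cq|<1$ then follows by analytic continuation in $c$).

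Next I would bring the right‑hand side of the theorem to the same shape. Combining the two terms in the parenthesis gives $\frac{(ad-b)q^m}{(1-adq^m)(1-bq^m)}$, so the prefactor $ad-b$ cancels; then the relations $(ad)_m(1-adq^m)=(ad)_{m+1}=(1-ad)(adq)_m$ and $(b)_m(1-bq^m)=(b)_{m+1}$ turn the right side of the theorem into $\frac{(a-b)(d-c)}{1-ad}\sum_{m\geq 0}\frac{(a)_m(bd/c)_m\,c^m q^m}{(b)_{m+1}(adq)_m}$. Together with the displayed identity just obtained, the theorem is therefore equivalent to the scalar series identity
$$(1-c)\sum_{m=0}^{\infty}\frac{(a)_m(bd/c)_m\,c^m}{(b)_m(adq)_m}-(1-ad)=(a-b)(d-c)\sum_{m=0}^{\infty}\frac{(a)_m(bd/c)_m\,c^m q^m}{(b)_{m+1}(adq)_m}.$$
Writing $v_m:=\frac{(a)_m(bd/c)_m\,c^m}{(b)_{m+1}(adq)_m}$, so that $\frac{(a)_m(bd/c)_m\,c^m}{(b)_m(adq)_m}=(1-bq^m)v_m$, this rearranges to $\sum_{m\geq 0}v_m\bigl[(1-c)(1-bq^m)-(a-b)(d-c)q^m\bigr]=1-ad$.

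To finish, I would prove this last identity by telescoping. Put $\alpha_m:=\frac{(a)_m(bd/c)_m\,(1-adq^m)\,c^m}{(b)_m(adq)_m}$, so that $\alpha_0=1-ad$ and $\alpha_m\to 0$ as $m\to\infty$ (using $|c|<1$). Computing $\alpha_m-\alpha_{m+1}$ and pulling out the common factor $v_m=\frac{(a)_m(bd/c)_m\,c^m}{(b)_m(1-bq^m)(adq)_m}$, the claim $v_m\bigl[(1-c)(1-bq^m)-(a-b)(d-c)q^m\bigr]=\alpha_m-\alpha_{m+1}$ reduces to the elementary polynomial identity
$$(1-adq^m)(1-bq^m)-(1-aq^m)(c-bdq^m)=(1-c)(1-bq^m)-(a-b)(d-c)q^m,$$
both sides of which expand to $1-c+(ac+bd-ad-b)q^m$. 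Hence the series telescopes to $\alpha_0=1-ad$, completing the proof. The main obstacle is the first step — spotting that the series on the left is a ${}_3\phi_2$ at exactly the argument where a known transformation applies, since otherwise there is no obvious handle on the left‑hand side; after the reduction to the scalar identity the only remaining ingenuity is guessing the antidifference $\alpha_m$, and the verification together with the analytic‑continuation step from $|c|<1$ to $|cq|<1$ is then routine.
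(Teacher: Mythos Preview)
Your proof is correct. The paper itself does not prove this theorem directly --- it is quoted from Bhoria, Eyyunni and Maji \cite{bem} --- but it does prove a finite analogue (the paper's Theorem~\ref{fafour}) whose $N\to\infty$ limit is precisely this statement, so that is the natural point of comparison. The paper's route applies Sears' terminating ${}_4\phi_3$ transformation \cite[(3.2.1)]{gasperrahman} with a specific choice of parameters, shifts the summation index by one, and then splits $\dfrac{(ad-b)q^{n-1}}{(1-bq^{n-1})(1-adq^{n-1})}$ by partial fractions; after this the two sides match term by term with no further work, and letting $N\to\infty$ gives the theorem. You instead recognize the left side (with its $n=0$ term restored) as a nonterminating ${}_3\phi_2$ at the special argument $de/(abc)$ and invoke the transformation (III.9) of Gasper--Rahman; because you retain the $n=0$ term rather than shift indices first, the transformed series does not line up immediately with the right side, and you close the remaining gap with the contiguous-relation/telescoping identity on $\alpha_m$. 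Both arguments sit in the same circle of ${}_3\phi_2$/${}_4\phi_3$ transformations, but the paper's version buys the stronger finite identity in one stroke (and hence the possibility of elliptic extension the paper emphasizes), while yours is more self-contained for the infinite case and makes the single ${}_3\phi_2$ input explicit. One small polish: since the bracketed factor on the right is $O(q^m)$, the right-hand series already converges for $|cq|<1$, so the analytic-continuation remark from $|c|<1$ to $|cq|<1$ can be tightened or absorbed into the final telescoping estimate.
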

 This theorem also has many beautiful consequences in $q$-series and partition theory. First of all, from this single identity, the authors of \cite{bem} could derive all five entries in the unorganized portion of Ramanujan's second and third notebooks \cite[pp.~354--355]{ramanujanoriginalnotebook2} (see also \cite[pp.~302--303]{ramanujanoriginalnotebook2tifr}). These entries can be found in Section \ref{fafive} of our paper. Moreover, using a differential operator to act on the partition-theoretic interpretation of the fourth of these entries, they were able to derive an important result of Bressoud and Subbarao \cite{bresub} which had defied a proof using analytical techniques for about 38 years. (Bressoud and Subbarao's proof was combinatorial). Moreover, the authors of \cite{bem} even generalized the result of Bressoud and Subbarao.
 
 In this paper, we obtain a finite analogue of Theorem \ref{Generalization of Dixit-Maji}, that is, the identity of Bhoria, Eyyunni and Maji. This allows to obtain finite analogues of all five entries of Ramanujan. We also derive a result involving a finite sum of a ${}_2\phi_1$ which has several important corollaries, for example, a generalization of the generating function version of Andrews' identity  \eqref{idspt} as well as of an identity involving the function $N_{\textup{SC}}(n)$ (see Section 5.2 for a definition). We also generalize an identity of Andrews, Chan and Kim from \cite{agl13} by considering finite analogues of first odd moments of rank and crank.
 
 \section{A finite analogue of a four parameter $q$-series identity}
 \begin{theorem}\label{fafour}
 	We have
 	\begin{align}\label{eq:9}
 		&\sum_{n=1}^{N}\left[\begin{array}{c}N\\n\end{array}\right] \frac{(q)_n\left(\frac{b}{a}\right)_n\left(\frac{c}{d}\right)_n(ad)_{N-n}(ad)^n}{(b)_n(cq)_n(ad)_{N}}\nonumber\\
 	&=\frac{(a-b)(d-c)}{(ad-b)}\displaystyle\sum_{n=1}^{N} \left[\begin{array}{c}N\\n\end{array}\right]\frac{(a)_{n-1}\left(\frac{bd}{c}\right)_{n-1}(q)_n\left(cq\right)_{N-n} c^{n-1}}{(b)_{n-1}(cq)_N(ad)_{n-1}}
 			\left(\frac{adq^{n-1}}{1-adq^{n-1}} - \frac{bq^{n-1}}{1-bq^{n-1}}\right).
 		\end{align}
 \end{theorem}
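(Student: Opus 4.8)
The plan is to bring both sides of \eqref{eq:9} into the standard shape of a terminating, balanced (Saalsch\"utzian) basic hypergeometric series, after which the identity becomes a transformation between two such series. (A direct induction on $N$ seems unattractive, since the several $N$-dependent Pochhammer symbols $(ad)_{N-n}$, $(cq)_{N-n}$, $(ad)_N$, $(cq)_N$ do not interact cleanly with the Pascal recurrence for the $q$-binomial coefficient.) For the left side I would use $\left[\begin{smallmatrix}N\\n\end{smallmatrix}\right](q)_n=(q)_N/(q)_{N-n}$, the factorization $(ad)_N=(ad)_{N-n}\,(adq^{N-n};q)_n$, and the reversal identity $(\alpha q^{N-n};q)_n=(-\alpha q^{N-1})^n q^{-n(n-1)/2}(\alpha^{-1}q^{1-N};q)_n$ (with $\alpha=q$ and $\alpha=ad$), which together collapse the prefactor: the quantity $\left[\begin{smallmatrix}N\\n\end{smallmatrix}\right](q)_n(ad)_{N-n}/(ad)_N$ equals $(q^{-N};q)_n\,q^{n}/\bigl((ad)^n\,(q^{1-N}/(ad);q)_n\bigr)$. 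Adjoining the trivial $n=0$ term (which equals $1$), the left side of \eqref{eq:9} becomes
\begin{equation*}
\sum_{n=0}^{N}\frac{(q^{-N};q)_n\,(b/a;q)_n\,(c/d;q)_n}{(b;q)_n\,(cq;q)_n\,(q^{1-N}/(ad);q)_n}\,q^{n}\;-\;1 .
\end{equation*}

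The right side is treated in the same spirit after two preparatory moves. First, $\tfrac{adq^{n-1}}{1-adq^{n-1}}-\tfrac{bq^{n-1}}{1-bq^{n-1}}=\tfrac{(ad-b)q^{n-1}}{(1-adq^{n-1})(1-bq^{n-1})}$, which cancels the prefactor $(ad-b)$ and lets $1-bq^{n-1}$ and $1-adq^{n-1}$ be absorbed into $(b)_{n-1}$ and $(ad)_{n-1}$, turning them into $(b)_n$ and $(ad)_n$. Second, shifting $n\mapsto n+1$, applying the Pochhammer collapse above (now with $(cq)_{N-n}/(cq)_N$), and peeling the leading factor out of each of $(q^{-N};q)_{m+1}$, $(q^{-N}/c;q)_{m+1}$, $(b)_{m+1}$, $(ad)_{m+1}$, the right side of \eqref{eq:9} becomes
\begin{equation*}
\frac{(a-b)(d-c)(1-q^{-N})}{(c-q^{-N})(1-b)(1-ad)}\sum_{m=0}^{N-1}\frac{(q^{1-N};q)_m\,(a;q)_m\,(bd/c;q)_m}{(q^{1-N}/c;q)_m\,(bq;q)_m\,(adq;q)_m}\,q^{m} .
\end{equation*}
Thus \eqref{eq:9} reduces to equating these two expressions, i.e.\ to an identity between two terminating Saalsch\"utzian series with three Pochhammer symbols upstairs and three downstairs and argument $q$ — in the usual notation, terminating balanced ${}_4\phi_3$'s carrying a ``spurious'' numerator parameter $q$ (the first with numerator data $q^{-N},q,b/a,c/d$ and denominator data $b,cq,q^{1-N}/(ad)$, the second with numerator data $q^{1-N},q,a,bd/c$ and denominator data $q^{1-N}/c,bq,adq$). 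I expect this last step to follow either from a classical transformation of Sears type for terminating balanced ${}_4\phi_3$ series, suitably specialized, or — more naturally within this circle of ideas — by adapting to the finite setting the method of \cite{bem} for establishing Theorem \ref{Generalization of Dixit-Maji}, with the finite analogue Theorem \ref{finmainabc} entering in place of the infinite identity \eqref{entry3gen}.

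The principal obstacle is bookkeeping rather than a single deep step: one must line up the four free parameters $a,b,c,d$ together with the rational prefactor $(a-b)(d-c)/(ad-b)$, track the index shift $n\mapsto n+1$ on the right (so that the right-hand sum terminates at $N-1$ while the left-hand one terminates at $N$), and correctly dispose of the extra $n=0$ term on the left — it is precisely this ``off by one'' that forces the required identity to be of Dixit--Maji type rather than a bare summation. As checks on the normalization I would first verify \eqref{eq:9} for $N=1$ directly, where both sides collapse to $\dfrac{(1-q)(a-b)(d-c)}{(1-b)(1-cq)(1-ad)}$, and then confirm that letting $N\to\infty$ in \eqref{eq:9} recovers Theorem \ref{Generalization of Dixit-Maji}, paralleling the way Theorem \ref{finmainabc} recovers \eqref{entry3gen}.
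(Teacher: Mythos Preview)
Your proposal is correct and is essentially the paper's own argument run in reverse: the paper starts from Sears' terminating balanced ${}_4\phi_3$ transformation \cite[p.~70, (3.2.1)]{gasperrahman} with the specialization $A=q$, $B=bq/a$, $C=cq/d$, $D=bq$, $E=cq^2$, then uses exactly the reversal identity you wrote (their \eqref{ef}--\eqref{eq:11}) to pass to $q$-binomial form, shifts $n\mapsto n-1$, and replaces $N$ by $N-1$ to obtain \eqref{eq:9}. Your ``Sears-type'' expectation is therefore precisely on target, and the ``off by one'' you flag is handled in the paper by making the Sears substitution with the $q$-shifted parameters $bq/a,\,cq/d,\,bq,\,cq^2$ rather than $b/a,\,c/d,\,b,\,cq$ (equivalently, by applying Sears at level $N$ and then replacing $N$ by $N-1$), so that the constant term and the index shift are absorbed automatically.
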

\begin{proof}
	From \cite[p. 70, Equation (3.2.1)]{gasperrahman},
	\begin{align*}
		{}_4\phi_{3}\left[\begin{array}{cccccc} q^{-N},&A,&B,&C\;\;\; ;&q\;,&q\\D,&E,&\frac{ABCq^{1-N}}{DE}& & & \end{array}\right]
		=\frac{\left(\frac{E}{A}\right)_{N}\left(\frac{DE}{BC}\right)_{N}}{\left(E\right)_{N}\left(\frac{DE}{ABC}\right)_{N}}{}_4\phi_{3}\left[\begin{array}{cccccc} q^{-N},&A,&\frac{D}{B},&\frac{D}{C}\;\;\; ;&q\;,&q\\D,&\frac{DE}{BC},&\frac{Aq^{1-N}}{E}& & & \end{array}\right].
	\end{align*}
Let $A=q,\;B=\displaystyle\frac{bq}{a},\;C=\frac{cq}{d},\;D=bq$ and $E=cq^2$ so as to have
	\begin{equation}\label{eq:10}
		\sum_{n=0}^{N}\frac{\left(q^{-N}\right)_{n}\left(\frac{bq}{a}\right)_{n}\left(\frac{cq}{d}\right)_{n}}{\left(bq\right)_{n}\left(\frac{q^{1-N}}{ad}\right)_{n}\left(cq^2\right)_{n}}q^{n}
		=\displaystyle \frac{(1-cq)(1-adq^N)}{(1-cq^{N+1})(1-ad)} \displaystyle\sum_{n=0}^{N} \frac{\left(q^{-N}\right)_n (a)_n \left(\frac{bd}{c}\right)_n}{(bq)_n (adq)_n \left(\frac{q^{-N}}{c}\right)_{n}}q^n.
	\end{equation}
	Using the elementary formula \cite[p. 15, Eq (4.1)]{dems}
	\begin{align}\label{ef}
	\left(\frac{q^{-N}}{x}\right)_{n} = \frac{(-1)^n (xq^{N-n+1})_n}{x^n q^{Nn}}q^{\frac{n(n-1)}{2}}
	\end{align}
to derive
\begin{align}
		\frac{\left(q^{-N}\right)_{n}}{\left(\frac{q^{-N}}{x}\right)_{n}}=\frac{\left(q^{N-n+1}\right)_{n}}{\left(xq^{N-n+1}\right)_{n}} x^{n}=\frac{\left(q\right)_{N}\left(xq\right)_{N-n}}{\left(q\right)_{N-n}\left(xq\right)_{N}}x^{n},\label{eq:11}
	\end{align}
 and then using the latter, with $x=ad$ for the left-hand side of \eqref{eq:10}, and then again with $x=c$ for the right-hand side, we see that
	\begin{equation*}\sum_{n=0}^{N}\frac{\left(\frac{bq}{a}\right)_{n}\left(\frac{cq}{d}\right)_{n}\left(ad\right)_{N-n}\left(q\right)_{N}}{\left(bq\right)_{n}\left(cq\right)_{n+1}\left(ad\right)_{N+1}\left(q\right)_{N-n}}\left(ad\right)^{n}=\sum_{n=0}^{N}\frac{\left(a\right)_{n}\left(\frac{bd}{c}\right)_{n}\left(q\right)_{N}\left(cq\right)_{N-n}(cq)^{n}}{\left(bq\right)_{n}\left(ad\right)_{n+1}\left(q\right)_{N-n}\left(cq\right)_{N+1}}\end{equation*}
	Replace $n$ by $n-1$ on both sides and multiply both sides of the resulting equation by  $ \frac{ad(1-\frac{b}{a})(1-\frac{c}{d})(1-q^{N+1})}{(1-b)}$ to get
	\begin{flushleft}
		$\displaystyle\sum_{n=1}^{N+1} \frac{\left(\frac{b}{a}\right)_n \left(\frac{c}{d}\right)_n (ad)_{N+1-n}(q)_{N+1}}{(b)_n (cq)_n (ad)_{N+1}(q)_{N+1-n}}(ad)^n \displaystyle ={(a-b)(d-c)}\displaystyle\sum_{n=1}^{N+1} \frac{(a)_{n-1} \left(\frac{bd}{c}\right)_{n-1}(q)_{N+1}(cq)_{N+1-n}}{{(b)_n(ad)_n (q)_{N+1-n} (cq)_{N+1}}}(cq)^{n-1}.$
	\end{flushleft}
	Replace $N$ by $N-1$ so that
\begin{align*}
&\sum_{n=1}^{N} \frac{\left(\frac{b}{a}\right)_n \left(\frac{c}{d}\right)_n (ad)_{N-n} (q)_N}{(b)_n (cq)_n (ad)_N (q)_{N-n}}(ad)^n \nonumber\\
& =\displaystyle\frac{(a-b)(d-c)}{(ad-b)} \displaystyle\sum_{n=1}^{N} \left(\frac{(a)_{n-1} \left(\frac{bd}{c}\right)_{n-1} (q)_N (cq)_{N-n} c^{n-1}}{(b)_{n-1} (ad)_{n-1} (q)_{N-n} (cq)_N}\frac{(ad-b) q^{n-1} }{(1-bq^{n-1}) (1-adq^{n-1})}\right).
\end{align*}
	This results in \eqref{eq:9} upon writing 
	\begin{equation*}
		\frac{(ad-b) q^{n-1} }{(1-bq^{n-1}) (1-adq^{n-1})}= \frac{adq^{n-1}}{1-adq^{n-1}}- \frac{bq^{n-1}}{1-bq^{n-1}}.
	\end{equation*}
\end{proof} 

\begin{corollary}
	\begin{equation}\label{eq:13}
		\sum_{n=1}^{N}\left[\begin{array}{c}N\\n\end{array}\right] \frac{(q)_n \left(\frac{c}{d}\right)_n (-zd)^n q^\frac{n(n+1)}{2}}{(zq)_n (cq)_n}
		= \frac{z}{c}(c-d)\displaystyle\sum_{n=1}^{N}\left[\begin{array}{c}N\\n\end{array}\right] \frac{(q)_n \left(\frac{zdq}{c}\right)_{n-1} (cq)_{N-n} (cq)^n}{(zq)_n (cq)_N}.\end{equation}
\end{corollary}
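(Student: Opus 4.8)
The plan is to read off \eqref{eq:13} from Theorem \ref{fafour} by a single parameter specialization followed by a limit: in \eqref{eq:9} I would replace $b$ by $zq$ and then let $a\to 0$. After the substitution $b=zq$ the factors $(b)_n,(b)_{n-1}$ become $(zq)_n,(zq)_{n-1}$ and $\left(\tfrac{bd}{c}\right)_{n-1}$ becomes $\left(\tfrac{zdq}{c}\right)_{n-1}$, so the parameters already match those appearing in \eqref{eq:13}; it remains only to pass to the limit $a\to0$. That limit is harmless because, after the substitution, both sides of \eqref{eq:9} are rational in $a$ and regular at $a=0$: on the right the only $a$-dependent denominators are $ad-b=ad-zq\to-zq\neq0$ and $1-adq^{n-1}\to1$, while on the left the apparent poles of $\left(\tfrac{zq}{a}\right)_n$ at $a=0$ are spurious, since the factor $a^{-n}$ is cancelled by the $a^n$ inside $(ad)^n=a^nd^n$.

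For the left-hand side I would rewrite $\left(\tfrac{zq}{a}\right)_n(ad)^n=d^n\prod_{j=0}^{n-1}(a-zq^{j+1})$, which tends to $d^n\prod_{j=0}^{n-1}(-zq^{j+1})=(-zd)^n q^{1+2+\cdots+n}=(-zd)^n q^{n(n+1)/2}$ as $a\to0$, while $(ad)_{N-n}\to1$ and $(ad)_N\to1$. Substituting these into the left side of \eqref{eq:9} produces exactly the left side of \eqref{eq:13}.

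For the right-hand side, as $a\to0$ with $b=zq$ one has $\dfrac{(a-b)(d-c)}{(ad-b)}\to d-c$, $(a)_{n-1}\to1$, $(ad)_{n-1}\to1$, and $\dfrac{adq^{n-1}}{1-adq^{n-1}}-\dfrac{bq^{n-1}}{1-bq^{n-1}}\to-\dfrac{zq^{n}}{1-zq^{n}}$. I would then use $(zq)_{n-1}(1-zq^{n})=(zq)_n$ to replace the denominator factor $(zq)_{n-1}(1-zq^{n})$ by $(zq)_n$, so that the $n$-th summand picks up the numerator factor $-zq^{n}c^{n-1}=-\tfrac{z}{c}(cq)^n$; pulling the constant $-\tfrac{z}{c}$ out and combining it with the overall factor $d-c$ gives $\tfrac{z}{c}(c-d)$, and what is left is precisely the right side of \eqref{eq:13}.

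The whole computation is routine bookkeeping with $q$-Pochhammer symbols; the only point worth a sentence is the justification that the limit $a\to0$ may be taken inside the finite sum, which, as noted, is immediate once $\left(\tfrac{zq}{a}\right)_n(ad)^n$ is written as the polynomial $d^n\prod_{j=0}^{n-1}(a-zq^{j+1})$ in $a$. Finally, I would carry over from Theorem \ref{fafour} the implicit nondegeneracy hypotheses under which \eqref{eq:13} is valid, namely $z,c,d\neq0$, $(zq)_n\neq0$ and $(cq)_n\neq0$ for $1\le n\le N$, and $(cq)_N\neq0$.
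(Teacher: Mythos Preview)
Your proof is correct and follows essentially the same route as the paper: substitute $b=zq$ in \eqref{eq:9} and let $a\to 0$, using that $\left(\tfrac{zq}{a}\right)_n a^n\to(-z)^nq^{n(n+1)/2}$. You have simply supplied more of the routine bookkeeping (in particular the simplification on the right-hand side) than the paper does.
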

\begin{proof}
	Let $a \longrightarrow 0 , b \longrightarrow zq$ in \eqref{eq:9} and simplify using the fact that
	\begin{align*}
		\lim\limits_{a\to0}\left(\frac{zq}{a}\right)_n (a)^n=\lim\limits_{a\to0}\left(1-\frac{zq}{a}\right)\cdots \left(1-\frac{zq^{n}}{a}\right) (a)^n
		=(-1)^n z^n q^\frac{n(n+1)}{2}.
	\end{align*}
\end{proof}
\begin{corollary}
	\begin{equation}\label{eq:14}
		\displaystyle\sum_{n=1}^{N}\left[\begin{array}{c}N\\n\end{array}\right] \frac{(q)_n (zc)^n q^{n^2}}{(zq)_n (cq)_n} = z \displaystyle\sum_{n=1}^{N} \left[\begin{array}{c}N\\n\end{array}\right]\frac{(q)_n (cq)_{N-n} (cq)^n}{(zq)_n (cq)_N}.
	\end{equation}
\end{corollary}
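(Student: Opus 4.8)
The plan is to obtain \eqref{eq:14} as the limiting case $d\to 0$ of the identity \eqref{eq:13} proved in the previous corollary. Both sides of \eqref{eq:13} are finite sums, so the limit passes inside the summation without any analytic subtlety, and it suffices to compute the $d\to 0$ limit of each summand on each side.

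For the left-hand side, the only factors carrying $d$ are $\left(\tfrac{c}{d}\right)_n$ and $(-zd)^n$. Writing $\left(\tfrac{c}{d}\right)_n=\prod_{j=0}^{n-1}\left(1-\tfrac{cq^j}{d}\right)$ and extracting the dominant term $-cq^j/d$ from each factor gives the elementary evaluation
\begin{equation*}
\lim_{d\to 0}\left(\frac{c}{d}\right)_n(-zd)^n=\lim_{d\to 0}\prod_{j=0}^{n-1}\left(1-\frac{cq^j}{d}\right)(-zd)^n=(zc)^n q^{n(n-1)/2},
\end{equation*}
which is the same type of computation used to prove \eqref{eq:13} (there with the parameter $a$ in place of $d$). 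Multiplying by the surviving factor $q^{n(n+1)/2}$ and combining the exponents, $q^{n(n-1)/2}\cdot q^{n(n+1)/2}=q^{n^2}$, so the $n$-th summand on the left of \eqref{eq:13} tends to $\left[\begin{matrix}N\\n\end{matrix}\right]\dfrac{(q)_n(zc)^n q^{n^2}}{(zq)_n(cq)_n}$, i.e.\ the $n$-th summand on the left of \eqref{eq:14}.

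For the right-hand side, the prefactor $\tfrac{z}{c}(c-d)$ tends to $z$, and within each summand the only remaining $d$-dependence is in $\left(\tfrac{zdq}{c}\right)_{n-1}$, which tends to $(0;q)_{n-1}=1$ for every $n\ge 1$. Hence the right side of \eqref{eq:13} converges to $z\sum_{n=1}^{N}\left[\begin{matrix}N\\n\end{matrix}\right]\dfrac{(q)_n(cq)_{N-n}(cq)^n}{(zq)_n(cq)_N}$, exactly the right side of \eqref{eq:14}. Equating the two limits completes the proof.

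There is essentially no genuine obstacle here: the interchange of limit and sum is automatic for a finite sum, and the only point requiring care is the bookkeeping of the powers of $q$ in the left-hand limit (checking that $q^{n(n-1)/2}$ from $\left(\tfrac{c}{d}\right)_n(-zd)^n$ combines with the standing $q^{n(n+1)/2}$ to give precisely $q^{n^2}$). As an alternative route one could bypass \eqref{eq:13} and instead let $a\to 0$, $b\to zq$ and $d\to 0$ simultaneously in \eqref{eq:9}, using $\lim_{a\to 0}\left(\tfrac{zq}{a}\right)_n a^n=(-1)^n z^n q^{n(n+1)/2}$ together with the limit displayed above; but the specialization of \eqref{eq:13} already in hand makes the first route shorter.
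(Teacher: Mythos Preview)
Your proof is correct and follows exactly the same approach as the paper, which simply lets $d\to 0$ in \eqref{eq:13}. You have merely supplied the routine details of that limit (the evaluation $\lim_{d\to 0}(c/d)_n(-zd)^n=(zc)^n q^{n(n-1)/2}$ and the trivializations on the right-hand side), which the paper leaves to the reader.
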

\begin{proof}
	Let $d\longrightarrow 0$ in \eqref{eq:13}.  
\end{proof}
\begin{remark}
Letting $c=1/z$ in the above identity, we get 
	\begin{equation*}
	\sum_{n=1}^{N}\left[\begin{array}{c}N\\n\end{array}\right] \frac{(q)_n q^{n^2}}{(zq)_n(z^{-1}q)_n} = z\sum_{n=1}^{N}\left[\begin{array}{c}N\\n\end{array}\right] \frac{(q)_n(z^{-1}q)_{N-n}(z^{-1}q)^n}{(zq)_n(z^{-1}q)_N} ,
	\end{equation*}
where $\displaystyle\sum_{n=1}^{N} \left[\begin{array}{c}N\\n\end{array}\right]\frac{(q)_n q^{n^2}}{(zq)_n(z^{-1}q)_n}$ 
is a finite analogue of the rank generating functions \cite[Theorem 2.2]{dems}. If we let $N\to\infty$ in \eqref{eq:14}, we recover an identity of Andrews \cite[p.~24, Corollary 2.2]{yesto}:
\begin{equation}\label{d=0analogue_THM2.2_DM}
	\sum_{n=1}^{\infty} \frac{ z^n  c^n q^{n^2}}{ (z q)_n (c q)_n } = z \sum_{n=1}^{\infty} \frac{(c q)^n }{(z q)_n}. 
\end{equation}
\end{remark}

\section{Finite analogues of five entries of Ramanujan}\label{fafive}
Our aim of this section is to derive the finite analogues of the five entries of Ramanujan mentioned in the introduction. The first of the five is
\begin{equation}\label{entry1}
	\frac{(-aq)_{\infty}}{(bq)_{\infty}}=\sum_{n=0}^{\infty}\frac{(-b/a)_na^nq^{n(n+1)/2}}{(q)_n(bq)_n}.
\end{equation}
Ramanujan has also recorded in his Lost Notebook \cite[p.~370]{lnb}. The finite analogue of this entry is
\begin{theorem}
(Finite analogue of  Entry 1)\\
The following identity holds:
	\begin{equation*}
	\sum_{n=0}^{N}\left[\begin{array}{c}N\\n\end{array}\right] \frac{\left(\frac{-b}{a}\right)_na^n q^\frac{n(n+1)}{2}}{(bq)_n}=\displaystyle\sum_{n=0}^{N}\left[\begin{array}{c}N\\n\end{array}\right]\frac{\left(\frac{-a}{b}\right)_n(bq)_{N-n} (bq)^n}{(bq)_N}.
	\end{equation*}
\end{theorem}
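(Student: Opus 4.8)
The plan is to obtain this finite analogue as a single specialization of Corollary~\eqref{eq:13}. First I would set $z=1$, $c=b$ and $d=-a$ in \eqref{eq:13}. On its left-hand side the factor $(zq)_n=(q)_n$ in the denominator cancels the $(q)_n$ in the numerator, while $(c/d)_n=(b/(-a))_n=(-b/a)_n$, $(-zd)^n=a^n$ and $(cq)_n=(bq)_n$; thus the left side of \eqref{eq:13} collapses to
\begin{equation*}
\sum_{n=1}^{N}\left[\begin{array}{c}N\\n\end{array}\right]\frac{\left(\frac{-b}{a}\right)_n a^n q^{n(n+1)/2}}{(bq)_n},
\end{equation*}
which is exactly the $n\ge 1$ part of the left-hand side of the asserted identity. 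Under the same substitution the scalar $\frac{z}{c}(c-d)$ becomes $\frac{a+b}{b}$ and, after the identical cancellation of $(q)_n$, the sum on the right-hand side of \eqref{eq:13} becomes $\sum_{n=1}^{N}\left[\begin{array}{c}N\\n\end{array}\right](-aq/b)_{n-1}(bq)_{N-n}(bq)^n/(bq)_N$.

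The next step is to absorb the prefactor $\frac{a+b}{b}=1+\frac{a}{b}$ using the elementary relation $\left(1+\frac{a}{b}\right)(-aq/b;q)_{n-1}=(-a/b;q)_n$, valid for every $n\ge 1$; this rewrites the right-hand side as $\sum_{n=1}^{N}\left[\begin{array}{c}N\\n\end{array}\right](-a/b)_n (bq)_{N-n}(bq)^n/(bq)_N$, that is, the $n\ge 1$ part of the right-hand side of the statement. It then remains only to restore the term $n=0$ to each sum. On the left this term equals $\left[\begin{array}{c}N\\0\end{array}\right](-b/a)_0 a^0 q^0/(bq)_0=1$, and on the right it equals $\left[\begin{array}{c}N\\0\end{array}\right](-a/b)_0(bq)_N(bq)^0/(bq)_N=1$; since these agree, adjoining them to the (already equal) sums over $1\le n\le N$ yields the claimed identity with both sums running from $n=0$ to $N$.

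I do not expect a genuine obstacle here: \eqref{eq:13}, and hence \eqref{eq:9}, is an identity of rational functions in the free parameters, and the choice $z=1$, $c=b$, $d=-a$ introduces no real pole because every potentially dangerous factor $(q)_n=(zq)_n$ is cancelled before the substitution takes effect. The only mild points to watch are locating this particular specialization and matching the summation range $1\le n\le N$ of \eqref{eq:13} with the range $0\le n\le N$ appearing in the statement. As a consistency check one may instead evaluate each side directly: the left-hand side is the terminating ${}_2\phi_1(q^{-N},\,-b/a;\,bq;\,q,\,-aq^{N+1})$, while the right-hand side, after the reflection $(bq^{N-n+1};q)_n=(-b)^n q^{Nn-n(n-1)/2}(q^{-N}/b;q)_n$, equals ${}_2\phi_1(q^{-N},\,-a/b;\,q^{-N}/b;\,q,\,q)$, and both sum to $(-aq;q)_N/(bq;q)_N$ by the two $q$-Chu--Vandermonde evaluations; letting $N\to\infty$ then recovers Ramanujan's Entry~\eqref{entry1}.
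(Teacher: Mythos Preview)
Your proof is correct and follows exactly the paper's approach: the paper's entire proof is the one-line instruction ``Let $z=1$, $d=-a$ and $c=b$ in \eqref{eq:13},'' and you have carried out precisely this substitution, supplying the details (the cancellation of $(q)_n$, the absorption $(1+a/b)(-aq/b)_{n-1}=(-a/b)_n$, and the reinstatement of the $n=0$ term) that the paper leaves implicit. Your additional consistency check via the two $q$-Chu--Vandermonde evaluations is a nice extra that the paper does not include.
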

\begin{proof}
Let  $z=1, d=-a$ and $c=b$ in \ref{eq:13}.
\end{proof}
Note that if we let $N\to\infty$ in the above theorem and use the $q$-binomial theorem on the resulting right-hand side, we obtain \eqref{entry1}.
Ramanujan's Entry 2 is
 \begin{equation*}
	(aq)_{\infty}\sum_{n=1}^{\infty}\frac{na^nq^{n^2}}{(q)_n(aq)_n}=\displaystyle\sum_{n=1}^{\infty}\frac{(-1)^{n-1}a^nq^\frac{n(n+1)}{2}}{1-q^n}.
\end{equation*}
The proof of this identity given in \cite{bem} begins with \eqref{d=0analogue_THM2.2_DM} and is quite long (see p.~446--449 of \cite{bem}). In what follows, we give a short proof of a finite analogue of this entry. Then letting $N\to\infty$ in this finite analogue immediately gives the entry itself.
\begin{theorem}
	(Finite analogue of  Entry 2)\\
We have
	\begin{equation}\label{correct}
		(aq)_N\sum_{n=1}^{N}\left[\begin{array}{c}N\\n\end{array}\right]\frac{na^nq^{n^2}}{(aq)_n}=\displaystyle\sum_{n=1}^{N}\left[\begin{array}{c}N\\n\end{array}\right]\frac{(q)_n(-1)^{n-1}a^nq^\frac{n(n+1)}{2}}{1-q^n}.
		\end{equation}
\end{theorem}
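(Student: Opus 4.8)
The plan is to derive the finite analogue of Entry 2 from Corollary \eqref{eq:14} by an appropriate choice of parameters followed by a differentiation trick, in direct parallel with how Bhoria, Eyyunni and Maji obtained the $N\to\infty$ version but carried out at the finite level, which should be considerably shorter. First I would set $c=1/z$ in \eqref{eq:14} (or, equivalently, start from the identity displayed in the Remark), obtaining a finite analogue of a rank-type identity
\[
\sum_{n=1}^{N}\left[\begin{array}{c}N\\n\end{array}\right]\frac{(q)_n q^{n^2}}{(zq)_n(z^{-1}q)_n}
= z\sum_{n=1}^{N}\left[\begin{array}{c}N\\n\end{array}\right]\frac{(q)_n(z^{-1}q)_{N-n}(z^{-1}q)^n}{(zq)_n(z^{-1}q)_N}.
\]
The key observation is that the left-hand side, after multiplying by $(z^{-1}q)_N$ say, becomes a function of $z$ whose behaviour near $z=1$ encodes the first moment weight $n$ that appears in \eqref{correct}: writing $z=1$ directly collapses the sum, so instead one extracts the linear term by applying the operator $\frac{\partial}{\partial z}$ (or the symmetric operator $z\frac{d}{dz}$) and then setting $z=1$.

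The main steps, in order, would be: (1) rewrite \eqref{eq:14} with $c = 1/z$ and clear denominators so that both sides are polynomials (or at least analytic) in $z$ near $z=1$; (2) note $\left.\frac{d}{dz}\right|_{z=1}\frac{1}{(zq)_n(z^{-1}q)_n}$ picks out, by logarithmic differentiation, a sum $\sum_{j=1}^{n}\left(\frac{q^j}{1-q^j}-\frac{-q^j}{1-q^j}\right)\cdot(\cdots)$ which, combined with the explicit $z$ in front of the right side and the $q^{n^2}$ factor, produces the weight $n$ — more precisely one uses that $\frac{d}{dz}\big[z\cdot(z^{-1}q)_n\big]\big|_{z=1}$ and the matching term on the left differ exactly by the factor $n$ times $(q)_n$; (3) on the right-hand side of the $c=1/z$ identity, differentiate the factor $z\,(z^{-1}q)_{N-n}(z^{-1}q)^n/(z^{-1}q)_N$ at $z=1$; many of the Pochhammer factors $(q)_{N-n}$, $(q)_N$, $(q)^n$ survive and the $z$-derivatives of the $(z^{-1}q)$-symbols again telescope, leaving the sign $(-1)^{n-1}$ and the power $q^{n(n+1)/2}$ after collecting $q^{n^2}\cdot q^{-\binom{n}{2}}$-type exponent bookkeeping; (4) divide through by the leftover constant and recognize the two sides as $(aq)_N\sum\cdots$ and $\sum\cdots\frac{(q)_n(-1)^{n-1}a^nq^{n(n+1)/2}}{1-q^n}$ after renaming $z\mapsto$ the variable playing the role of $a$ — here one must be slightly careful because \eqref{correct} carries an honest parameter $a$ whereas the specialization gives $a=1$, so in fact the cleaner route is to keep $z$ free, differentiate in an auxiliary variable, and only at the end substitute to reintroduce $a$; alternatively, and probably more transparently, one differentiates \eqref{eq:14} itself in $z$ at a general point and then specializes $c$, which keeps $a$ (the old $z$) alive throughout.

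I expect the main obstacle to be the exponent and sign bookkeeping in step (3): matching $q^{n^2}$ coming from \eqref{eq:14} against the target $q^{n(n+1)/2}$ requires the $\frac{n(n-1)}{2}$ discrepancy to be absorbed by the $q$-derivatives of $(z^{-1}q)_{N-n}$ and $(z^{-1}q)_N$ and by the $(z^{-1}q)^n = z^{-n}q^{n(n+1)/2}$ factor, and ensuring the alternating sign $(-1)^{n-1}$ emerges cleanly (rather than $(-1)^n$ or with an extra factor) will need care with the $z\to 0$ versus $z\to 1$ limits. A secondary concern is justifying that differentiation commutes with the finite sum — trivial here since everything is a rational function of $z$ regular at the evaluation point, so no analytic subtleties arise. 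Once the bookkeeping is pinned down, the identity \eqref{correct} should follow in a few lines, and letting $N\to\infty$ with the $q$-binomial theorem recovers Ramanujan's Entry 2, giving the short proof promised in the text.
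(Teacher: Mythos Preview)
Your primary approach --- specialize \eqref{eq:14} to $c=1/z$ and then differentiate at $z=1$ --- has a structural gap that prevents it from producing the weight $n$. After setting $c=1/z$, the left-hand side
\[
\sum_{n=1}^{N}\left[\begin{matrix}N\\n\end{matrix}\right]\frac{(q)_n q^{n^2}}{(zq)_n(z^{-1}q)_n}
\]
is invariant under $z\mapsto z^{-1}$, so its $z$-derivative at $z=1$ is identically zero. Concretely, logarithmic differentiation of $1/\big((zq)_n(z^{-1}q)_n\big)$ at $z=1$ yields $\sum_{j=1}^{n}\frac{q^j}{1-q^j}-\sum_{j=1}^{n}\frac{q^j}{1-q^j}=0$, not the $n$ you claim in step~(2); the sign in your displayed expression $\frac{q^j}{1-q^j}-\frac{-q^j}{1-q^j}$ is wrong. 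Hence both sides differentiate to zero and you recover only the trivial identity $0=0$, not the $a=1$ case of \eqref{correct}. Your fallback --- keep $c$ free in \eqref{eq:14} and differentiate in $z$ --- also does not isolate the weight cleanly: since $z$ sits in $(zq)_n$ on \emph{both} sides, $z\frac{d}{dz}$ produces an extra $\sum_{k=1}^{n}\frac{zq^k}{1-zq^k}$ term on each side, and these do not cancel termwise because the coefficients differ. No specialization of $c$ afterwards removes that obstruction while simultaneously reintroducing a free parameter $a$.

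The paper's proof avoids this by starting not from \eqref{eq:14} but from Andrews' finite Heine transformation, which after sending two of its parameters to infinity yields the two-variable identity
\[
\sum_{n=0}^{N}\left[\begin{matrix}N\\n\end{matrix}\right]\frac{b^{n}q^{n^2}}{(aq)_n}=\frac{1}{(aq)_N}\sum_{n=0}^{N}\left[\begin{matrix}N\\n\end{matrix}\right]\left(\tfrac{b}{a}\right)_n(-1)^na^nq^{n(n+1)/2}.
\]
Here $b$ appears \emph{only} as $b^n$ on the left and only inside $(b/a)_n$ on the right, so $\partial/\partial b$ followed by $b\to a$ gives exactly the factor $n$ on the left and $(q)_{n-1}$ on the right, with no residual logarithmic sums. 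The key structural feature you are missing is an auxiliary variable that enters one side purely as a power; neither $z$ nor $c$ in \eqref{eq:14} has this property.
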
    
\begin{proof}
	Andrews' finite Heine transformation \cite[Cor. 3]{andfinheine} is
	\begin{equation*}
		{}_{3}\phi_{2}\left[\begin{array}{ccccc} q^{-N},&a,&b\;\;\; ;&q\;,&q\\c,&\frac{q^{1-N}}{t}& & & \end{array}\right]=\frac{\left(\frac{c}{b}\right)_{N}\left(bt\right)_{N}}{\left(c\right)_{N}\left(t\right)_{N}}\;{}_{3}\phi_{2}\left[\begin{array}{ccccc} q^{-N},&\frac{abt}{c},b,\;\;\; ;&q\;,&q\\bt,&\frac{bq^{1-N}}{c} & & & \end{array}\right],
		\end{equation*}
	that is,
	\begin{equation*}
		\sum_{n=0}^{N}\frac{(q^{-N})_n(a)_n(b)_n}{(c)_n\left(\frac{q^{1-N}}{t}\right)_n(q)_n}q^n =\frac{\left(\frac{c}{b}\right)_N(bt)_N}{(c)_N(t)_N} \displaystyle\sum_{n=0}^{N}\frac{(q^{-N})_n\left(\frac{abt}{c}\right)_n(b)_n}{\left(\frac{bq^{1-N}}{c}\right)_n(bt)_n(q)_n}q^n.
		\end{equation*}
Employing \eqref{eq:11}, and then replacing $t$ by $\frac{t}{ab}$ , we get 
	\begin{equation*}
		\sum_{n=0}^{N}\frac{(a)_n(b)_n(q)_N\left(\frac{t}{ab}\right)_{N-n}}{(c)_n(q)_n(q)_{N-n}\left(\frac{t}{ab}\right)_{N}}\left(\frac{t}{ab}\right)^n=\frac{\left(\frac{c}{b}\right)_N\left(\frac{t}{a}\right)_N}{(c)_N\left(\frac{t}{ab}\right)_N}\displaystyle\sum_{n=0}^{N}\frac{\left(\frac{t}{c}\right)_n(b)_n(q)_N\left(\frac{c}{b}\right)_{N-n}}{\left(\frac{t}{a}\right)_{n}(q)_n(q)_{N-n}\left(\frac{c}{b}\right)_N}\left(\frac{c}{b}\right)^n.
		\end{equation*}
	Now let $a,b \to \infty$ and  use the fact $\lim_{x\to \infty}(x)_{n}/x^n=(-1)^nq^\frac{n(n-1)}{2}$ twice to derive
	\begin{equation*}
		\sum_{n=0}^{N}\left[\begin{array}{c}N\\n\end{array}\right] \frac{t^nq^{n(n-1)}}{(c)_n}=\frac{1}{(c)_N} \sum_{n=0}^{N}\left[\begin{array}{c}N\\n\end{array}\right]\left(\frac{t}{c}\right)_n(-1)^nc^nq^\frac{n(n-1)}{2}.
		\end{equation*}
	Next, replace $t$ by $bq$ and $c$ by $aq$ so that
	\begin{equation*}
		\sum_{n=0}^{N}\left[\begin{array}{c}N\\n\end{array}\right] \frac{b^nq^{n^2}}{(aq)_n}=\frac{1}{(aq)_N} \sum_{n=0}^{N}\left[\begin{array}{c}N\\n\end{array}\right]\left(\frac{b}{a}\right)_n(-1)^na^nq^\frac{n(n+1)}{2}.
		\end{equation*} 
	Now differentiate this identity with respect to $b$ to get
	\begin{equation*}
		(aq)_N\sum_{n=1}^{N}\left[\begin{array}{c}N\\n\end{array}\right] \frac{nab^{n-1}q^{n^2}}{(aq)_n}=\displaystyle \sum_{n=1}^{N}\left[\begin{array}{c}N\\n\end{array}\right]\left(\frac{b}{a}\right)_n(-1)^{n-1}a^nq^\frac{n(n+1)}{2}\sum_{k=0}^{n-1}\left(\frac{q^k}{1-bq^k/a}\right).
		\end{equation*}
	Now let $ b \to a $ and note that
	\begin{align*}
		\lim\limits_{b\to a}\left(\frac{b}{a}\right)_{n}\sum_{k=0}^{n-1}\left(\frac{q^k}{1-\frac{bq^k}{a}}\right) 
		&=\lim\limits_{b\to a}\left[\left(\frac{b}{a}\right)_n\frac{1}{1-\frac{b}{a}} + \left(\frac{b}{a}\right)_n\left(\frac{q}{1-\frac{bq}{a}}+\cdots+\frac{q^{n-1}}{1-\frac{bq^{n-1}}{a}}\right)\right]\\
		&=(q)_{n-1}. 
	\end{align*}
This results in \eqref{correct}.
\end{proof}    
The third entry of Ramanujan is
\begin{equation}
\label{entry3}
	\sum_{n=1}^{\infty}\frac{(\frac{b}{a})_na^n}{(1-q^n)(b)_n}=\sum_{n=1}^{\infty}\frac{a^n-b^n}{1-q^n}.
	\end{equation}
We now give its finite analogue.
\begin{theorem}
(Finite analogue of  Entry 3)\\
	We have
\begin{equation}\label{entry3fin}
\sum_{n=1}^{N}\left[\begin{array}{c}N\\n\end{array}\right] \frac{(q)_n\left(\frac{b}{a}\right)_n(a)_{N-n}a^n}{(b)_n(1-q^n)(a)_{N}}=\displaystyle\sum_{m=1}^{\infty}\frac{(a^m-b^m)(1-q^{mN})}{1-q^m}.
		\end{equation}
\end{theorem}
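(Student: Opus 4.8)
The plan is to deduce \eqref{entry3fin} as a double specialization of the finite four‑parameter identity \eqref{eq:9}, mirroring the way Ramanujan's Entry 3 \eqref{entry3} falls out of the Dixit--Maji identity \eqref{entry3gen} upon setting $c=1$. First I would put $d=1$ in \eqref{eq:9}. On the left this turns $(ad)_{N-n},(ad)_N,(ad)^n,\left(\tfrac{c}{d}\right)_n$ into $(a)_{N-n},(a)_N,a^n,(c)_n$, and since $\dfrac{(c)_n}{(cq)_n}=\dfrac{1-c}{1-cq^n}$, the left side becomes $(1-c)$ times $\displaystyle\sum_{n=1}^{N}\left[\begin{array}{c}N\\n\end{array}\right]\dfrac{(q)_n(b/a)_n(a)_{N-n}a^n}{(b)_n(1-cq^n)(a)_N}$. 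On the right, the prefactor $\dfrac{(a-b)(d-c)}{(ad-b)}$ collapses to $1-c$, while $\left(\tfrac{bd}{c}\right)_{n-1},(ad)_{n-1},adq^{n-1}$ become $\left(\tfrac{b}{c}\right)_{n-1},(a)_{n-1},aq^{n-1}$. Cancelling the common factor $1-c$ (legitimate for $c\neq1$, and the surviving identity is rational in $c$, hence holds at $c=1$ by continuity) recovers exactly Theorem~\ref{finmainabc}, so no new input beyond \eqref{eq:9} is needed.

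Next I would set $c=1$ in that identity. Its left side is already the left side of \eqref{entry3fin}. On the right, $\dfrac{(b)_{n-1}}{(b)_{n-1}}=1$ and $\left[\begin{array}{c}N\\n\end{array}\right]\dfrac{(q)_n(q)_{N-n}}{(q)_N}=1$, so the right side reduces to $\displaystyle\sum_{n=1}^{N}\left(\dfrac{aq^{n-1}}{1-aq^{n-1}}-\dfrac{bq^{n-1}}{1-bq^{n-1}}\right)$. Finally, expanding $\dfrac{aq^{n-1}}{1-aq^{n-1}}=\sum_{m\ge1}a^mq^{(n-1)m}$ and likewise for $b$ (valid since $|q|<1$ gives $|aq^{n-1}|\le|a|<1$ for $1\le n\le N$), and interchanging the finite $n$-sum with the geometric $m$-series, one obtains $\sum_{m\ge1}(a^m-b^m)\sum_{n=1}^{N}q^{(n-1)m}=\sum_{m\ge1}(a^m-b^m)\dfrac{1-q^{mN}}{1-q^m}$, which is precisely the right side of \eqref{entry3fin}.

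There is no genuine obstacle here; the only points deserving a word of care are cancelling the $1-c$ factor before specializing $c=1$, and the (absolutely convergent, hence harmless) interchange of the sum over $n$ with the geometric series in $m$, which requires $|a|,|b|<1$. One could equally well quote Theorem~\ref{finmainabc} directly with $c=1$ and proceed straight to the geometric-series step; I prefer routing through \eqref{eq:9} to keep the section self-contained. As a consistency check, letting $N\to\infty$ sends $1-q^{mN}\to1$ and returns Entry 3 \eqref{entry3}.
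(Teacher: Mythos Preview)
Your proposal is correct and follows essentially the same route as the paper: the paper divides both sides of \eqref{eq:9} by $1-c/d$ and then lets $c=d=1$, which is the same two-parameter specialization you perform (you set $d=1$, cancel the common factor $1-c$, then let $c\to1$), and both of you finish with the identical geometric-series computation $\sum_{n=1}^{N}\bigl(\tfrac{aq^{n-1}}{1-aq^{n-1}}-\tfrac{bq^{n-1}}{1-bq^{n-1}}\bigr)=\sum_{m\ge1}(a^m-b^m)\tfrac{1-q^{mN}}{1-q^m}$. The only cosmetic difference is the order in which the limits are taken.
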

\begin{proof}
	Divide both sides of \eqref{eq:9} by $1-c/d$ and then let $c=d=1$ thereby obtaining
	\begin{align}\label{entry3int}
		\sum_{n=1}^{N}\left[\begin{array}{c}N\\n\end{array}\right] \frac{(q)_n\left(\frac{b}{a}\right)_n(a)_{N-n}a^n}{(b)_n(1-q^n)(a)_{N}}
	&=\sum_{n=1}^{N}\left(\frac{aq^{n-1}}{1-aq^{n-1}}-\frac{bq^{n-1}}{1-bq^{n-1}}\right)\nonumber\\
	&=\sum_{m=1}^{\infty}(a^m-b^m)\sum_{n=0}^{N-1}(q^n)^m\nonumber\\
	&=\sum_{m=1}^{\infty}\frac{(a^m-b^m)(1-q^{mN})}{1-q^m}.
	\end{align}
\end{proof}    
The limiting case $N \longrightarrow \infty $ of \eqref{entry3fin} gives  \eqref{entry3}.

\begin{theorem}
	(Finite analogue of  Entry 4)\\
The following identity is valid:
\begin{equation*}
	\sum_{n=1}^{N}\left[\begin{array}{c}N\\n\end{array}\right]\frac{(-1)^{n-1}a^nq^\frac{n(n+1)}{2}(q)_n}{(1-q^n)(aq)_n}=\sum_{n=1}^{N}\frac{aq^n}{1-aq^n}.
		\end{equation*}
\end{theorem}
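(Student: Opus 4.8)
The plan is to deduce the identity from Corollary \eqref{eq:13} via a confluent ($0/0$) limit, together with the already established finite analogue of Entry~3.

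First I would set $z=1$ in \eqref{eq:13}; the factor $(q)_n$ in each numerator then cancels against $(zq)_n=(q)_n$ in each denominator, and \eqref{eq:13} collapses to
\begin{equation*}
\sum_{n=1}^{N}\left[\begin{array}{c}N\\n\end{array}\right]\frac{\left(\frac{c}{d};q\right)_n(-d)^nq^{n(n+1)/2}}{(cq)_n}=\frac{c-d}{c}\sum_{n=1}^{N}\left[\begin{array}{c}N\\n\end{array}\right]\frac{\left(\frac{dq}{c};q\right)_{n-1}(cq)_{N-n}(cq)^n}{(cq)_N}.
\end{equation*}
Both sides vanish to first order as $c\to d$: the right one because of the prefactor $c-d$, the left one because the factor $1-\frac cd$ sits inside $\left(\frac cd;q\right)_n$. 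I would therefore first divide through by $\frac{c-d}{c}$, and, using $\left(\frac cd;q\right)_n=\left(1-\frac cd\right)\left(\frac{cq}{d};q\right)_{n-1}$ together with $1-\frac cd=-\frac{c-d}{d}$, rewrite $\frac{c}{c-d}\left(\frac cd;q\right)_n=-\frac cd\left(\frac{cq}{d};q\right)_{n-1}$, which is regular at $c=d$ and equals $-(q)_{n-1}$ there.

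Carrying out the limit $c\to d$ on both sides, then writing $(-d)^n=(-1)^nd^n$, $-(-1)^n=(-1)^{n-1}$, $(q)_{n-1}=(q)_n/(1-q^n)$, and renaming $d$ as $a$, I would arrive at
\begin{equation*}
\sum_{n=1}^{N}\left[\begin{array}{c}N\\n\end{array}\right]\frac{(-1)^{n-1}a^nq^{n(n+1)/2}(q)_n}{(1-q^n)(aq)_n}=\sum_{n=1}^{N}\left[\begin{array}{c}N\\n\end{array}\right]\frac{(q)_n(aq)_{N-n}(aq)^n}{(1-q^n)(aq)_N}.
\end{equation*}
The left-hand side here is exactly the one in the statement, so it only remains to evaluate the right-hand side. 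But this right-hand side is precisely the left side of \eqref{entry3fin} with $b=0$ and $a$ replaced by $aq$; hence, by \eqref{entry3int}, it equals $\sum_{m=1}^{\infty}\frac{(aq)^m(1-q^{mN})}{1-q^m}=\sum_{m=1}^{\infty}(aq)^m\sum_{j=0}^{N-1}q^{jm}=\sum_{j=0}^{N-1}\frac{aq^{j+1}}{1-aq^{j+1}}=\sum_{n=1}^{N}\frac{aq^n}{1-aq^n}$, which completes the proof. (Letting $N\to\infty$ and using $\left[\begin{array}{c}N\\n\end{array}\right]\to 1/(q)_n$ then recovers the classical Entry~4.)

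The only genuinely delicate step is the confluent limit $c\to d$: one has to notice that both sides of \eqref{eq:13} (at $z=1$) have a simple zero there and to strip the vanishing factor $1-\frac cd$ out of $\left(\frac cd;q\right)_n$ before passing to the limit; everything else is routine $q$-Pochhammer bookkeeping followed by a direct appeal to \eqref{entry3fin}. Should one wish to avoid invoking Entry~3, the right-hand side above can be evaluated by a short independent computation (for instance by induction on $N$), but reusing \eqref{entry3fin}/\eqref{entry3int} is the cleanest route.
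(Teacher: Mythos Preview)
Your proof is correct. Both you and the paper derive the identity by specializing the master identity \eqref{eq:9}, but in different orders. The paper first takes $c=d=1$ (this is \eqref{entry3int}), then lets $a\to 0$ in the \emph{first} equality of \eqref{entry3int} and relabels $b\mapsto aq$; because the right-hand side there is already the elementary sum $\sum_{n=1}^{N}\bigl(\tfrac{aq^{n-1}}{1-aq^{n-1}}-\tfrac{bq^{n-1}}{1-bq^{n-1}}\bigr)$, the target $\sum_{n=1}^{N}\tfrac{aq^n}{1-aq^n}$ drops out immediately. You instead start from \eqref{eq:13} (the $a\to 0$, $b=zq$ specialization), set $z=1$, and then perform the confluent limit $c\to d$; this leaves a nontrivial $q$-binomial sum on the right, which you then have to evaluate by a second appeal to \eqref{entry3fin}/\eqref{entry3int}. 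So your route is sound but slightly more roundabout: the extra step is precisely the computation that the paper's order of specialization avoids. Incidentally, your detour through the infinite series can be shortened: the first equality of \eqref{entry3int} with $b=0$, $a\mapsto aq$ already yields $\sum_{n=1}^{N}\tfrac{aq^n}{1-aq^n}$ directly, without expanding into $\sum_{m\ge 1}(aq)^m(1-q^{mN})/(1-q^m)$.
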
    
\begin{proof}
	Let $a \longrightarrow 0$ in the first equality of \eqref{entry3int} and then replace $b$ by $aq$.
\end{proof}
The limiting case $N \longrightarrow \infty $, in particular, gives Entry 4:
\begin{equation}\label{entry4}
	\sum_{n=1}^{\infty}\frac{(-1)^{n-1}a^nq^\frac{n(n+1)}{2}}{(1-q^n)(aq)_n}=\sum_{n=1}^{\infty}\frac{a^nq^n}{1-q^n}.
	\end{equation}
\begin{theorem}
	(Finite analogue of  Entry 5)\\
	We have
\begin{equation}\label{entry5fin}
\sum_{n=1}^{N}\left[\begin{array}{c}N\\n\end{array}\right] \frac{(q)_n(q)_{n-1}(a)_{N-n}a^n}{(a)_n(1-q^n)(a)_{N}}=\displaystyle\sum_{n=1}^{N}\frac{aq^{n-1}}{(1-aq^{n-1})^2}.
		\end{equation}
\end{theorem}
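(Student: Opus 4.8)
The plan is to derive \eqref{entry5fin} from the first equality in \eqref{entry3int} --- established while proving the finite analogue of Entry 3 --- by differentiating with respect to $b$ and then letting $b\to a$, precisely the device already used for the finite analogue of Entry 2. I would begin from
\begin{equation*}
\sum_{n=1}^{N}\left[\begin{array}{c}N\\n\end{array}\right] \frac{(q)_n\left(\frac{b}{a}\right)_n(a)_{N-n}a^n}{(b)_n(1-q^n)(a)_{N}}=\sum_{n=1}^{N}\left(\frac{aq^{n-1}}{1-aq^{n-1}}-\frac{bq^{n-1}}{1-bq^{n-1}}\right),
\end{equation*}
which holds for $b$ near $a$; both sides are finite sums of functions holomorphic in $b$ there (assuming $(a;q)_n\neq 0$), so term-by-term differentiation in $b$ followed by the specialization $b=a$ is legitimate.

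The right-hand side is immediate: $\frac{d}{db}\left(-\frac{bq^{n-1}}{1-bq^{n-1}}\right)=-\frac{q^{n-1}}{(1-bq^{n-1})^2}$, which at $b=a$ contributes $-\sum_{n=1}^{N}\frac{q^{n-1}}{(1-aq^{n-1})^2}$. The only real computation is on the left, where one must evaluate $\frac{d}{db}\left[\frac{(b/a)_n}{(b)_n}\right]$ at $b=a$. The crucial observation is that $(b/a)_n=\prod_{k=0}^{n-1}(1-bq^{k}/a)$ vanishes at $b=a$ for every $n\ge 1$, because of its $k=0$ factor $1-b/a$. Hence in the quotient rule the term involving the derivative of $(b)_n$ drops out at $b=a$, while in $\frac{d}{db}(b/a)_n$ only the summand in which the factor $1-b/a$ is differentiated survives; a short calculation then gives $\frac{d}{db}\left[\frac{(b/a)_n}{(b)_n}\right]$ at $b=a$ equal to $-\frac{(q)_{n-1}}{a\,(a)_n}$.

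Substituting this, the $b$-derivative of the left-hand side at $b=a$ equals $-\frac{1}{a}\sum_{n=1}^{N}\left[\begin{array}{c}N\\n\end{array}\right]\frac{(q)_n(q)_{n-1}(a)_{N-n}a^n}{(a)_n(1-q^n)(a)_N}$; equating this with the right-hand derivative found above and multiplying through by $-a$ produces exactly \eqref{entry5fin}. The main --- and essentially only --- obstacle is the apparent $0/0$ indeterminacy in the $b\to a$ limit on the left, which is dissolved by the vanishing of $(b/a)_n$ at $b=a$ noted above, so no L'H\^{o}pital-type iteration beyond the single differentiation is needed. Finally, sending $N\to\infty$ --- writing out the $q$-binomial coefficient and using $\frac{(q)_N}{(q)_{N-n}}\to 1$ and $\frac{(a)_{N-n}}{(a)_N}\to 1$ --- reduces \eqref{entry5fin} to the classical Entry 5, $\sum_{n=1}^{\infty}\frac{(q)_{n-1}a^n}{(1-q^n)(a)_n}=\sum_{n=1}^{\infty}\frac{aq^{n-1}}{(1-aq^{n-1})^2}$.
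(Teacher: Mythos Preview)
Your argument is correct. Differentiating the first equality of \eqref{entry3int} in $b$ and then setting $b=a$ does exactly what you claim: the vanishing of $(b/a)_n$ at $b=a$ kills the second quotient-rule term, the surviving derivative of $(b/a)_n$ at $b=a$ is $-(q)_{n-1}/a$, and the right-hand side differentiates termwise to $-q^{n-1}/(1-aq^{n-1})^2$; multiplying by $-a$ yields \eqref{entry5fin}.

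The paper takes a different but closely related route. Rather than starting from the two-parameter Entry~3 analogue and differentiating, it returns to the four-parameter master identity \eqref{eq:9}, sets $d=1$ while keeping $c$ free, and resolves the $b\to a$ indeterminacy \emph{algebraically} by dividing both sides by $1-b/a$ (using that $(b/a)_n=(1-b/a)(bq/a)_{n-1}$ and that the right-hand difference factors as $(a-b)q^{n-1}/[(1-aq^{n-1})(1-bq^{n-1})]$), and only then specializes $b=a$, $c=1$. Because both sides of \eqref{entry3int} have a simple zero at $b=a$, ``divide by $1-b/a$ and set $b=a$'' and ``differentiate in $b$ at $b=a$'' are the same operation up to the scalar $-1/a$, so the two arguments compute the same limit. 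What your approach buys is economy: it uses only the already-proved finite Entry~3 and the differentiation device already deployed for Entry~2, without revisiting the four-parameter identity. What the paper's approach buys is the intermediate two-parameter identity (in $b$ and $c$) displayed just before specializing, which may have independent interest.
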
    
\begin{proof}
	Let $d=1$ in \ref{eq:9} and then divide both sides by $1-\frac{b}{a}$ so that
	\begin{equation*}
		\sum_{n=1}^{N}\left[\begin{array}{c}N\\n\end{array}\right] \frac{(q)_n\left(\frac{bq}{a}\right)_{n-1}\left(cq\right)_{n-1}(a)_{N-n}a^n}{(b)_n(cq)_n(a)_{N}}=\displaystyle\sum_{n=1}^{N} \left[\begin{array}{c}N\\n\end{array}\right]\frac{\left(\frac{b}{c}\right)_{n-1}(q)_n(cq)_{N-n} c^{n-1}}{(b)_{n-1}(cq)_N}\frac{aq^{n-1}}{(1-aq^{n-1})(1-bq^{n-1})}.
		\end{equation*}
	Now let $b=a$ and $c=1$ to arrive at \eqref{entry5fin}.
\end{proof}
Let $N \longrightarrow \infty $ in \eqref{entry5fin} to have
\begin{align*}
	\sum_{n=1}^{\infty}\frac{(q)_{n-1}a^n}{(1-q^n)(a)_n}&=\sum_{n=1}^{\infty}\frac{aq^{n-1}}{(1-aq^{n-1})^2}\nonumber\\
	&=\sum_{m=1}^{\infty}m\left(\frac{a}{q}\right)^m\sum_{n=1}^{\infty}q^{mn}\nonumber\\
	&=\sum_{m=1}^{\infty}\frac{ma^m}{1-q^m}.
	\end{align*}

\section{An identity involving a finite sum of a ${}_2\phi_{1}$}\label{fa2p1}
In this section, we derive an identity for a finite sum of a ${}_2\phi_{1}$ that is instrumental in proving all of the results in Section \ref{app}. These include, among other things, a new generalization of the generating function version of Andrews' identity for $\textup{spt}(n)$, that is, \eqref{idspt}.
We begin with a lemma.

\begin{lemma}\label{supheinelemma}
We have
\begin{align}\label{supheine}
	&\sum_{n=1}^{N}\frac{(-1)^{n-1}\left(\frac{c}{d} \right)_{n}d^n  q^{\frac{n(n+1)}{2}}}{(q)_n (q)_{N-n} (cq)_{n}}  \sum_{k=1}^n \frac{q^k}{1- q^k} \nonumber\\
	&=\frac{(\frac{c}{d})_{\infty}(dq)_{\infty}}{(q)_{N}(cq)_{\infty}(dq^{N+1})_{\infty}} \sum_{k=1}^{N}\left[\begin{array}{c}N\\k\end{array}\right]\frac{d^kq^{k(k+1)}}{(dq)_k(1-q^k)}{_2}\phi_{1}\left[\begin{array}{ccc} dq,&dq^{N+1};\; &\frac{cq^k}{d} \\dq^{k+1}& \end{array}\right] .
	\end{align}
\end{lemma}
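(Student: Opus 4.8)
The plan is to start from the finite analogue identity \eqref{eq:13} (the first corollary of Theorem \ref{fafour}) and manipulate it into a form amenable to a Heine-type transformation, since the appearance of a ${}_2\phi_1$ on the right together with the $\sum_{k=1}^n \frac{q^k}{1-q^k}$ weight on the left strongly suggests that a derivative of a $q$-Gauss/Heine transformation is the mechanism. First I would take \eqref{eq:13}, and on the right-hand side rewrite the inner factor so that the $\sum_{k=1}^n q^k/(1-q^k)$ appears: the standard way to generate such a logarithmic-derivative weight is to differentiate an identity with respect to a parameter sitting inside a Pochhammer symbol $(x;q)_n$, using $\frac{d}{dx}\log(x;q)_n = -\sum_{k=0}^{n-1} q^k/(1-xq^k)$, and then specialize $x$ to a value (typically $x=q$, which turns $-\sum_{k=0}^{n-1}q^k/(1-q^{k+1})$ into $-\sum_{k=1}^{n}q^k/(1-q^k)$). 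So I expect the proof to run: pick an auxiliary identity — most likely a specialization of \eqref{eq:13} or of Andrews' finite Heine transformation used earlier in the paper (the one with the $\left(\frac{c}{b}\right)_N(bt)_N/((c)_N(t)_N)$ prefactor) — carrying a free parameter; differentiate in that parameter; then specialize.

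Concretely, I would look for a finite identity of the shape $\sum_{n=1}^{N}\frac{(c/d)_n d^n q^{n(n+1)/2}(\cdot)_n}{(q)_n(q)_{N-n}(cq)_n} = (\text{prefactor})\cdot {}_2\phi_1[\cdots]$ with an extra parameter, say $b$, occuring as $(b)_n$ on the left and correspondingly inside the ${}_2\phi_1$ and its prefactor on the right. Differentiating both sides with respect to $b$ and then setting $b=q$ produces $(q)_{n-1}\sum_{k=1}^{n}\frac{q^k}{1-q^k}$ on the left (using the limit computation already demonstrated in the proof of the finite analogue of Entry 2, namely $\lim_{b\to a}(b/a)_n\sum_{k=0}^{n-1}q^k/(1-bq^k/a)=(q)_{n-1}$, here with $a=1$), while on the right the $b$-derivative of the prefactor $\frac{(c/d)_\infty(dq)_\infty}{(q)_N(cq)_\infty(dq^{N+1})_\infty}$-type quantity and of the ${}_2\phi_1$ combine; since the ${}_2\phi_1$ on the right of the target has no $b$-dependence left, the differentiation must hit only the prefactor, which after simplification yields the $\left[\begin{array}{c}N\\k\end{array}\right]\frac{d^k q^{k(k+1)}}{(dq)_k(1-q^k)}$ weight and the single ${}_2\phi_1\left[\begin{array}{ccc} dq,&dq^{N+1};\; &\frac{cq^k}{d} \\ dq^{k+1}& \end{array}\right]$. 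The $q$-binomial coefficient and the $q^{k(k+1)}$ there are exactly the fingerprints of differentiating a product like $(q)_N/((q)_k(q)_{N-k})$ times a geometric-type factor.

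I would carry out the steps in this order: (1) identify/derive the parametrized finite Heine-type identity that reduces to \eqref{eq:13} (or to the $d\to 0$ case \eqref{eq:14}) in an appropriate limit — essentially re-deriving \eqref{eq:13} from Andrews' finite Heine transformation but keeping the extra parameter $b$ alive; (2) rewrite the $q$-series on both sides with the factors $(b;q)_n$ and the corresponding parameters in the ${}_2\phi_1$; (3) differentiate both sides with respect to $b$; (4) take the limit $b\to q$ on the left using the Pochhammer-derivative limit from the Entry 2 proof, and on the right observe that only the prefactor survives the differentiation (or, if a residual ${}_2\phi_1$ term appears, show it vanishes or collapses); (5) clean up Pochhammer symbols, convert finite products $(xq)_{N-n}/(xq)_N$ into $q$-binomial form, and send any remaining index to $\infty$ where the identity already holds, obtaining \eqref{supheine}.

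The main obstacle I expect is step (4): controlling the $b$-derivative of the right-hand side. Differentiating a ${}_2\phi_1$ with a parameter in a numerator Pochhammer produces a double sum (an inner $\sum q^k/(1-\cdot)$), and one must show that after specialization this double sum telescopes or re-indexes into the single clean ${}_2\phi_1$ with argument $cq^k/d$ and the $q^{k(k+1)}/((dq)_k(1-q^k))$ weight — in particular getting the shift of the $q$-argument from $q$ to $q^k$ and the base-point shift $dq\to dq^{k+1}$ to match requires a careful Pochhammer splitting $(x)_n = (x)_k (xq^k)_{n-k}$ and a swap of summation order. A secondary technical point is making sure the infinite-product prefactor $\frac{(c/d)_\infty(dq)_\infty}{(q)_N(cq)_\infty(dq^{N+1})_\infty}$ emerges correctly, i.e. that the $N$-dependence lives only in $(q)_N$ and $(dq^{N+1})_\infty$; this should follow from the standard identity $(x)_N (xq^N)_\infty = (x)_\infty$ applied to the finite products $(cq)_{N-n}$, $(dq)_{N-n}$ that appear along the way.
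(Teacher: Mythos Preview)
Your differentiation strategy is not how the paper proves this lemma, and as you yourself flag, the main obstacle in your plan is real and unresolved. The paper's argument is far more direct: it substitutes van Hamme's identity
\[
\sum_{k=1}^{n}\frac{q^k}{1-q^k}=\sum_{k=1}^{n}\begin{bmatrix}n\\k\end{bmatrix}\frac{(-1)^{k-1}q^{k(k+1)/2}}{1-q^k}
\]
into the left-hand side, swaps the order of summation so that $k$ is the outer index, shifts the inner index by $n=m+k$, rewrites the resulting finite inner sum as a terminating ${}_2\phi_1$ using the elementary relation \eqref{ef}, and then applies the ordinary Heine transformation \eqref{heine} once to that inner ${}_2\phi_1$. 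The $k$-indexed sum of ${}_2\phi_1$'s with arguments $cq^k/d$ and lower parameter $dq^{k+1}$ that you were worried about producing is thus present from the start: it comes straight from van Hamme's $k$-sum, not from any differentiation.

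Your plan, by contrast, never explains how a single $b$-derivative of a single ${}_2\phi_1$ identity would generate an outer sum over $k$ of \emph{different} ${}_2\phi_1$'s with $k$-dependent argument and $k$-dependent lower parameter. Differentiating a Pochhammer $(b)_n$ inside a series produces a weight $\sum_{j}q^j/(1-bq^j)$ attached to each term, not a new summation index paired with a shifted hypergeometric; your step (4), where you hope the derivative ``hits only the prefactor'' or else ``collapses,'' is exactly where the argument would have to do all the work, and you give no mechanism for it. So the proposal has a genuine gap at its core. If you want to salvage a differentiation route, you would still need an interchange-of-summation step essentially equivalent to the paper's use of van Hamme, at which point differentiating is superfluous.
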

\begin{proof}
Using van Hamme's identity 
\begin{align*}
\sum_{k=1}^{n} \frac{q^k}{1 - q^k} = \sum_{k=1}^{n}\left[\begin{matrix} n\\k\end{matrix}\right]\frac{(-1)^{k-1} q^{k(k+1)/2}}{(1-q^k)},
\end{align*}
in the first step below, we have
\begin{align}\label{befheine}
&\sum_{n=1}^{N}\frac{(-1)^{n-1}\left(\frac{c}{d} \right)_{n}d^n  q^{\frac{n(n+1)}{2}}}{(q)_n (q)_{N-n} (cq)_{n}} \sum_{k=1}^n \frac{q^k}{1- q^k}\nonumber\\
 & =   \displaystyle\sum_{n=1}^{N}\frac{(-1)^{n-1}\left(\frac{c}{d} \right)_{n}d^n  q^{\frac{n(n+1)}{2}}}{(q)_n (q)_{N-n}(cq)_{n}} \sum_{k=1}^{n}\left[\begin{matrix} n\\k\end{matrix}\right]\frac{(-1)^{k-1} q^{k(k+1)/2}}{(1-q^k)} \nonumber\\ 
& =\sum_{k=1}^{N}\frac{(-1)^{k-1} q^{k(k+1)/2}}{(q)_k (1-q^k)}\sum_{n=k}^{N}\frac{(-1)^{n-1}d^{n}\left(\frac{c}{d} \right)_{n}  q^{\frac{n(n+1)}{2}}}{(cq)_{n} (q)_{N-n}(q)_{n-k}}\nonumber\\
	&=\sum_{k=1}^{N}\frac{ q^{k(k+1)}}{(q)_k  (1-q^k)} \sum_{m=0}^{N-k} \frac{(-1)^m d^{k+m}\left(\frac{c}{d} \right)_{k+m} q^{\frac{m(m+1)}{2}+mk}}{(cq)_{k+m}(q)_{N-m-k}(q)_m }\nonumber\\
	&=\sum_{k=1}^{N}\frac{ q^{k(k+1)}}{(q)_k (q)_{N-k} (1-q^k)}\sum_{m=0}^{N-k}  \frac{\left(\frac{c}{d} \right)_{k+m}d^{k+m}\left( q^{-(N-k)} \right)_mq^{(N+1)m} }{(q)_m(cq)_{m+k} }\nonumber\\
	&=\sum_{k=1}^{N}\frac{\left(\frac{c}{d}\right)_k d^kq^{k(k+1)}}{\left(cq\right)_k(q)_k (q)_{N-k} (1-q^k)}\sum_{m=0}^{N-k}  \frac{\left(\frac{cq^{k}}{d} \right)_{m}\left( q^{-(N-k)} \right)_m(dq^{N+1})^m }{(q)_m\left(cq^{k+1}\right)_{m} },
\end{align}
where in the second-last step, we invoked  \eqref{ef} with $x=1$ and $N$ replaced by $N-k$.

The inner sum is now handled using Heine's transformation \cite[p.~359, Equations (III.1), (III.2)]{gasperrahman}, namely, for $|z|<1$ and $|\gamma|<|\beta|<1$,
\begin{equation}\label{heine}
	_{2}\phi_{1}\left[\begin{array}{ccc} \alpha,&\beta\; ;&z\\\gamma&  \end{array}\right]
=\frac{(\beta)_{\infty}(\alpha z)_{\infty}}{(\gamma)_{\infty}(z)_{\infty}}\;_{2}\phi_{1}\left[\begin{array}{ccc} \frac{\gamma}{\beta},&z\; ;&\beta\\\alpha z&  \end{array}\right].
\end{equation}
Let $\alpha=q^{-(N-k)} $, $\beta=\frac{cq^{k}}{d} $, $\gamma=cq^{k+1}$ and $z=dq^{N+1} $ in \eqref{heine} so that
\begin{align}\label{aftheine}
	&\sum_{m=0}^{N-k}  \frac{\left(\frac{cq^{k}}{d} \right)_{m}\left( q^{-(N-k)} \right)_m(dq^{N+1})^m }{(q)_m\left(cq^{k+1}\right)_{m} }
&=\frac{(dq^{k+1})_{\infty}(\frac{cq^{k}}{d})_{\infty}}{(cq^{k+1})_{\infty}(dq^{N+1})_{\infty}}\;_{2}\phi_{1}\left[\begin{array}{ccc} dq,&dq^{N+1};\; &\frac{cq^k}{d} \\dq^{k+1}& \end{array}\right].
\end{align}
Substituting \eqref{aftheine} in \eqref{befheine} results in \eqref{supheine} upon simplification.
\end{proof}
We also need the following lemma whose proof is similar to that of Corollary 4.1 of \cite{dems}. Hence we give only the outline of the proof.
\begin{lemma}\label{corlemma}
We have
	\begin{equation*}
		\sum_{n=1}^{N}\left[\begin{array}{c}N\\n\end{array}\right] \frac{\left(\frac{c}{d}\right)_n(d)^n(-1)^{n-1} q^\frac{n(n+1)}{2}}{(cq)_n} =\left(1- \frac{(dq)_N}{(c q)_N} \right).
		\end{equation*}
\end{lemma}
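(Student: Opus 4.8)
The plan is to collapse the sum into a single terminating ${}_{2}\phi_{1}$ and then evaluate it by the terminating $q$-Vandermonde summation. First I would put the $q$-binomial coefficient into hypergeometric form: applying \eqref{ef} with $x=1$ gives $(q^{-N};q)_{n}=(-1)^{n}q^{n(n-1)/2-Nn}(q;q)_{N}/(q;q)_{N-n}$, so that
\begin{equation*}
\left[\begin{array}{c}N\\n\end{array}\right]=\frac{(-1)^{n}q^{Nn-n(n-1)/2}\,(q^{-N};q)_{n}}{(q;q)_{n}}.
\end{equation*}

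Substituting this into the left side of Lemma \ref{corlemma}, the two sign factors combine to $(-1)^{2n-1}=-1$ and the powers of $q$ collapse through $q^{Nn-n(n-1)/2}q^{n(n+1)/2}=q^{(N+1)n}$, giving
\begin{align*}
\sum_{n=1}^{N}\left[\begin{array}{c}N\\n\end{array}\right]\frac{(c/d;q)_{n}d^{n}(-1)^{n-1}q^{n(n+1)/2}}{(cq;q)_{n}}
&=-\sum_{n=1}^{N}\frac{(q^{-N};q)_{n}(c/d;q)_{n}}{(q;q)_{n}(cq;q)_{n}}(dq^{N+1})^{n}\\
&=1-{}_{2}\phi_{1}\!\left(q^{-N},\,c/d;\,cq;\,q,\,dq^{N+1}\right),
\end{align*}
where in the last step I have restored the $n=0$ term, which equals $1$. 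The remaining sum is exactly the displayed terminating ${}_{2}\phi_{1}$.

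To finish I would apply the terminating $q$-Vandermonde summation \cite[Appendix~(II.7)]{gasperrahman}, namely ${}_{2}\phi_{1}(q^{-n},b;c;q,cq^{n}/b)=(c/b;q)_{n}/(c;q)_{n}$, taken with $n=N$, $b=c/d$ and lower parameter $cq$. The argument matches precisely, since $(cq)q^{N}/(c/d)=dq^{N+1}$, and the evaluation is $(cq/(c/d);q)_{N}/(cq;q)_{N}=(dq;q)_{N}/(cq;q)_{N}$. Plugging this back into the previous display yields $1-(dq;q)_{N}/(cq;q)_{N}$, which is the claim.

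There is no genuine obstacle here; the whole argument is a short chain of standard manipulations. The one place to be careful is the bookkeeping of the signs and the $q$-exponents after the substitution for the $q$-binomial coefficient, together with recognising that the resulting ${}_{2}\phi_{1}$ has exactly the argument demanded by the form (II.7) of $q$-Vandermonde rather than (II.6). Alternatively, one may bypass the ${}_{2}\phi_{1}$ entirely: taking $a=-d$ and $b=c$ in the Finite analogue of Entry~1 proved above, the left-hand side there equals, up to an overall sign, the $n\geq 1$ part of the sum in Lemma \ref{corlemma}, while its right-hand side simplifies to $(dq;q)_{N}/(cq;q)_{N}$ by a standard $q$-binomial summation; separating off the $n=0$ terms then gives the result.
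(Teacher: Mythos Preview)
Your proof is correct. The paper argues differently: it first specializes \eqref{eq:13} at $z=1$, which transforms the left-hand side into
\[
-\sum_{n=1}^{N}\frac{(d/c;q)_{n}\,(q^{-N};q)_{n}}{(q;q)_{n}\,(q^{-N}/c;q)_{n}}\,q^{n},
\]
and only then evaluates this by the $q$-Chu--Vandermonde summation in the form \eqref{q-Chu-Vandermonde} (i.e.\ Gasper--Rahman (II.6)) with $a=d/c$, $x=q^{-N}/c$. So the paper passes through its main identity to reach a ${}_{2}\phi_{1}$ with argument $q$, whereas you skip that step entirely and recognise the original sum directly as a terminating ${}_{2}\phi_{1}$ with argument $dq^{N+1}=cq\cdot q^{N}/(c/d)$, to which (II.7) applies on the nose. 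Your route is shorter and self-contained (it does not rely on \eqref{eq:13} or Theorem~\ref{fafour}), while the paper's route is natural in context since \eqref{eq:13} is already available and Lemma~\ref{corlemma} is being used precisely to handle the $z=1$ case of that identity. Your closing remark about the alternative through the finite analogue of Entry~1 is essentially the paper's approach in disguise: the right-hand side there still has to be summed by $q$-Chu--Vandermonde, which is exactly what the paper does after setting $z=1$ in \eqref{eq:13}.
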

\begin{proof}
 Let  $z=1$ in \eqref{eq:13}. Then use the fact $(c q)_{N-n}/(cq)_N = 1/(cq^{N+1-n})_n$ so that
	\begin{align*}
	\sum_{n=1}^{N}\left[\begin{array}{c}N\\n\end{array}\right] \frac{\left(\frac{c}{d}\right)_n(d)^n(-1)^{n-1} q^\frac{n(n+1)}{2}}{(cq)_n} 
		=-\sum_{n=1}^{N}  \frac{\left(\frac{d}{c}\right)_{n}(q^{-N})_nq^n} {(q)_n\left(\frac{q^{-N}}{c}\right)_n},
	\end{align*}
Now use the  $q$-Chu-Vandermonde identity \cite[p.~354, II(6)]{gasperrahman}
\begin{equation}\label{q-Chu-Vandermonde}
		{}_{2}\phi_{1}\left[ \begin{matrix} a,  q^{-N} \\
			x \end{matrix} \, ; q, q  \right] 
		= \frac{\left(\frac{x}{a}\right)_N a^N}{(x)_N}  
		\end{equation}
with $x= q^{-N}/c$ and $a = d/c$ to write the right-hand side as a $q$-product which completes the proof.
	\end{proof}

\begin{theorem}\label{2phi1id}
The following identity holds:
	\begin{align}\label{2phi1ideqn}
	&\sum_{n=1}^{N}\frac{n(-1)^{n-1}\left(\frac{c}{d} \right)_{n}d^n  q^{\frac{n(n+1)}{2}}}{(q)_n (q)_{N-n} (cq)_{n}}  + \frac{(\frac{c}{d})_{\infty}(dq)_{\infty}}{(q)_{N}(cq)_{\infty}(dq^{N+1})_{\infty}} \sum_{k=1}^{N}\bigg[\begin{array}{c}N\\k\end{array}\bigg]\frac{d^kq^{k(k+1)} }{(dq)_k(1-q^k)}{_2}\phi_{1}\bigg(\begin{array}{ccc} dq,&dq^{N+1};\; &\frac{cq^k}{d} \\dq^{k+1}& \end{array}\bigg)\nonumber\\
	&= \frac{c}{(c-d)(q)_N}\left(1-\frac{(dq)_N}{(cq)_N}\right) +\frac{1}{(cq)_N} \sum_{k=1}^N \frac{\left(\frac{cq}{d}\right)_k (dq)_{N-k}(dq)^k}{(q)_k (q)_{N-k}(1-q^k) }.
	\end{align}
\end{theorem}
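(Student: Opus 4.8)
The plan is to obtain \eqref{2phi1ideqn} by differentiating the Corollary identity \eqref{eq:13} in the parameter $z$ and setting $z=1$. First I would simplify the left side of \eqref{2phi1ideqn}: by Lemma \ref{supheinelemma} the ${}_2\phi_1$-sum there equals $\sum_{n=1}^N\frac{(-1)^{n-1}(c/d)_nd^nq^{n(n+1)/2}}{(q)_n(q)_{N-n}(cq)_n}\sum_{k=1}^n\frac{q^k}{1-q^k}$, and adding to it the first (weight-$n$) sum of \eqref{2phi1ideqn} merges the two inner sums through $n+\sum_{k=1}^n\frac{q^k}{1-q^k}=\sum_{k=1}^n\frac{1}{1-q^k}$, so that the whole left side becomes $\sum_{n=1}^N\frac{(-1)^{n-1}(c/d)_nd^nq^{n(n+1)/2}}{(q)_n(q)_{N-n}(cq)_n}\sum_{k=1}^n\frac{1}{1-q^k}$. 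Writing the left-hand side of \eqref{eq:13} as $(q)_N\sum_n\frac{(c/d)_n(-d)^nq^{n(n+1)/2}}{(q)_{N-n}(cq)_n}\cdot\frac{z^n}{(zq)_n}$ and using $\frac{d}{dz}\bigl(\frac{z^n}{(zq)_n}\bigr)\big|_{z=1}=\frac{1}{(q)_n}\sum_{k=1}^n\frac{1}{1-q^k}$, one sees that this collapsed left side of \eqref{2phi1ideqn} is exactly $-\frac{1}{(q)_N}\frac{d}{dz}\big|_{z=1}$ of the left-hand side of \eqref{eq:13}; by \eqref{eq:13} it therefore also equals $-\frac{1}{(q)_N}\frac{d}{dz}\big|_{z=1}$ of the \emph{right}-hand side of \eqref{eq:13}.

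The crux is to evaluate this last quantity. I would first rewrite the right-hand side of \eqref{eq:13} in a form fit for differentiation: applying the elementary identity \eqref{eq:11} with $x=c$ to turn $\bigl[\begin{smallmatrix}N\\n\end{smallmatrix}\bigr]\frac{(q)_n(cq)_{N-n}}{(cq)_N}$ into $\frac{(q^{-N})_n}{(q^{-N}/c)_n\,c^n}$, together with $(zdq/c)_{n-1}=\frac{(zd/c)_n}{1-zd/c}$, it becomes $\frac{z(c-d)}{c-zd}\,S(z)$, where $S(z):=\sum_{n=1}^N\frac{(q^{-N})_n(zd/c)_nq^n}{(zq)_n(q^{-N}/c)_n}$. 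Next I would apply a terminating hypergeometric transformation of the same type used in the proof of Theorem \ref{fafour} (namely the ${}_4\phi_3$ transformation with a numerator parameter equal to $q$), followed by the elementary simplification $\frac{(z;q)_n}{(zq;q)_n}=\frac{1-z}{1-zq^n}$, so as to bring $S(z)+1=\sum_{n=0}^N\frac{(q^{-N})_n(zd/c)_nq^n}{(zq)_n(q^{-N}/c)_n}$ into the shape $\frac{(dq)_N}{(cq)_N}\bigl(1+(1-z)R(z)\bigr)$ with $R(z):=\sum_{n=1}^N\frac{(q^{-N})_n(cq/d)_nq^n}{(q)_n(q^{-N}/d)_n(1-zq^n)}$. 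The explicit factor $(1-z)$ is the whole point: at $z=1$ it gives $S(1)=\frac{(dq)_N}{(cq)_N}-1$ (that is, Lemma \ref{corlemma}), while differentiating annihilates the $(1-z)R'(z)$ term, leaving $S'(1)=-\frac{(dq)_N}{(cq)_N}R(1)$.

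Finally I would assemble the pieces. Since $\frac{d}{dz}\bigl(\frac{z(c-d)}{c-zd}\bigr)\big|_{z=1}=\frac{c}{c-d}$ and $\frac{z(c-d)}{c-zd}\big|_{z=1}=1$, the product rule gives $\frac{d}{dz}\big|_{z=1}$ of the right-hand side of \eqref{eq:13} equal to $\frac{c}{c-d}S(1)+S'(1)$, so $-\frac{1}{(q)_N}$ of it is $\frac{c}{(c-d)(q)_N}\bigl(1-\frac{(dq)_N}{(cq)_N}\bigr)+\frac{(dq)_N}{(q)_N(cq)_N}R(1)$; the first summand is precisely the first term on the right of \eqref{2phi1ideqn}, and simplifying $R(1)$ with \eqref{eq:11} (now with $x=d$, applied to $\frac{(q^{-N})_n}{(q^{-N}/d)_n}$) turns $\frac{(dq)_N}{(q)_N(cq)_N}R(1)$ into $\frac{1}{(cq)_N}\sum_{n=1}^N\frac{(cq/d)_n(dq)_{N-n}(dq)^n}{(q)_n(q)_{N-n}(1-q^n)}$, which is the second term, completing the proof. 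I expect the main obstacle to be the hypergeometric transformation step: one must recognize that the right transformation of $S(z)+1$ is the one that exhibits the factor $(1-z)$ (so that a single differentiation at $z=1$ does all the work), and then track the $q$-powers and Pochhammer reflections carefully enough that $R(1)$ comes out as exactly the $(cq/d)_n(dq)_{N-n}(dq)^n$-summand of the target. A quick sanity check at $N=1$, where both sides of \eqref{2phi1ideqn} equal $\frac{(d-c)q}{(1-q)^2(1-cq)}$, is reassuring.
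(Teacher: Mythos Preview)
Your proposal is correct and, while it shares the paper's opening move (differentiate \eqref{eq:13} in $z$ at $z=1$ and invoke Lemma~\ref{supheinelemma} for the second piece on the left), it treats the right-hand side by a genuinely different and cleaner mechanism.

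The paper differentiates the right side of \eqref{eq:13} \emph{as it stands}, obtaining $T_1+T_2$ where $T_2$ carries the awkward inner sum $-\sum_{k=1}^{n-1}\frac{q^kd/c}{1-q^kd/c}+\sum_{k=1}^{n}\frac{q^k}{1-q^k}$; it then calls on the Guo--Zhang identity \eqref{m0} to collapse that inner sum, interchanges the order of the resulting double sum, and finishes with the $q$-Chu--Vandermonde summation \eqref{q-Chu-Vandermonde}. Your route instead rewrites the right side as $\tfrac{z(c-d)}{c-zd}\,S(z)$, then transforms $S(z)+1$ \emph{before} differentiating so that a factor $(1-z)$ appears explicitly; a single product-rule evaluation at $z=1$ then does all the work, and the Guo--Zhang lemma and the sum interchange are bypassed entirely. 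One small correction of terminology: the transformation you need for $S(z)+1$, viewing the left side as ${}_3\phi_2\bigl[\begin{smallmatrix}q^{-N},\,zd/c,\,q\\ zq,\,q^{-N}/c\end{smallmatrix};q,q\bigr]$ with the inert numerator parameter $q$, is precisely the terminating ${}_3\phi_2$ transformation \cite[(III.13)]{gasperrahman} (a Sears-type identity, hence your ${}_4\phi_3$ description is morally right but slightly imprecise). With $b=zd/c,\ c=q,\ d=zq,\ e=q^{-N}/c$ there, the prefactor simplifies via \eqref{ef} to $(dq)_N/(cq)_N$ and the transformed series is exactly ${}_3\phi_2\bigl[\begin{smallmatrix}q^{-N},\,cq/d,\,z\\ zq,\,q^{-N}/d\end{smallmatrix};q,q\bigr]$, after which your $(z)_n/(zq)_n=(1-z)/(1-zq^n)$ step and the final reduction of $R(1)$ via \eqref{eq:11} go through verbatim. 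What your approach buys is the elimination of the external lemma \eqref{m0} and the double-sum manipulation; what the paper's approach buys is that it never leaves the realm of elementary partial-fraction algebra once \eqref{m0} is granted.
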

\begin{proof}
	
	Differentiate both sides of \eqref{eq:13} w.r.t $z$ and then let $z=1$ to obtain
	\begin{align}\label{Putting_z=1}
		 \sum_{n=1}^{N}\frac{(-1)^{n-1}\left(\frac{c}{d} \right)_{n}d^n  q^{\frac{n(n+1)}{2}}}{(q)_n (q)_{N-n} (cq)_{n}}  \left(n + \sum_{k=1}^n \frac{q^k}{1- q^k}  \right) =:T_1+T_2,
		\end{align}
	where
	\begin{align*}
		T_1&  =\frac{d-c}{c}\sum_{n=1}^N \frac{\left(\frac{dq}{c}\right)_{n-1} (c q)_{N-n} (cq)^n }{(q)_n (c q)_N (q)_{N-n} } \nonumber\\
		T_2&= \frac{d-c}{c}  \sum_{n=1}^N \frac{\left(\frac{dq}{c}\right)_{n-1} (c q)_{N-n} (cq)^n }{(q)_n (c q)_N (q)_{N-n} } \left(-\sum_{k=1}^{n-1} \frac{q^kd/c}{1- q^kd/c} + \sum_{k=1}^n \frac{q^k}{1-q^k} \right).
	\end{align*}
	Using \eqref{eq:13} with $z=1$ and then invoking Lemma \ref{corlemma}, we see that
	\begin{equation}\label{s1}
		T_1=\frac{1}{(q)_N}\left(1-\frac{(dq)_N}{(cq)_N}\right).
	\end{equation}
	In \cite[Corollary 3.1]{guozhang}, Guo and Zhang  have shown that if $n\geq 0$ and $0\leq m\leq n$, then
	\begin{align*}
		&\sum_{k=0\atop k\neq m}^{n}\left[\begin{matrix} n\\k\end{matrix}\right]\frac{\left(\frac{q}{x}\right)_k(x)_{n-k}}{1-q^{k-m}}x^k=(-1)^mq^{\frac{m(m+1)}{2}}\left[\begin{matrix} n\\m\end{matrix}\right](xq^{-m})_n\left(\sum_{k=0}^{n-1}\frac{xq^{k-m}}{1-xq^{k-m}}-\sum_{k=0\atop k\neq m}^{n}\frac{q^{k-m}}{1-q^{k-m}}\right).
	\end{align*}
	The $m=0$ case of the above identity gives
	\begin{align}\label{m0}
		\sum_{k=1}^{n}\frac{q^k}{1-q^k}-\sum_{k=1}^{n-1}\frac{xq^k}{1-xq^k}=\frac{x}{1-x}-\frac{1}{(x)_n}\sum_{k=1}^{n}\left[\begin{matrix} n\\k\end{matrix}\right]\frac{\left(\frac{q}{x}\right)_k(x)_{n-k}x^k}{1-q^k}.
	\end{align}
	Now invoke \eqref{m0} with $x=d/c$ to simplify the expression within the parentheses in $T_2$ to see that
	\begin{align*}
		T_2&=\frac{d-c}{c}  \sum_{n=1}^N \frac{\left(\frac{dq}{c}\right)_{n-1} (c q)_{N-n} (cq)^n }{(q)_n (c q)_N (q)_{N-n} }\left( \frac{d}{c-d} - \frac{1}{\left(\frac{d}{c}\right)_n} \sum_{k=1}^n \left[\begin{matrix} n\\k\end{matrix}\right] \frac{\left(\frac{qc}{d}\right)_k \left(\frac{d}{c}\right)_{n-k} \left(\frac{d}{c}\right)^{k}}{1- q^k}\right)\nonumber\\
		&=\frac{d}{(c-d)(q)_N}\left(1-\frac{(dq)_N}{(cq)_N}\right) +T_{2}^{*},
	\end{align*}
	where 
	\begin{equation}\label{s2s}
		T_2^{*}:=\frac{1}{(cq)_N}  \sum_{n=1}^N \frac{ \left(cq\right)_{N-n} (cq)^n }{(q)_n  (q)_{N-n}  } \sum_{k=1}^n \left[\begin{matrix} n\\k\end{matrix}\right] \frac{\left(\frac{qc}{d}\right)_k \left(\frac{d}{c}\right)_{n-k}  \left(\frac{d}{c}\right)^{k}}{1- q^k},
	\end{equation}
and in the last step, we used \eqref{s1}. Now
	\begin{align*}
		T_2^{*}& =\frac{1}{(cq)_N} \sum_{k=1}^N \frac{\left(\frac{cq}{d}\right)_k (dq)^k}{(q)_k (1-q^k) } \sum_{j=0}^{N-k} \frac{\left(\frac{d}{c}\right)_j (cq)^j (cq)_{N-j-k}}{(q)_j(q)_{N-j-k}} \nonumber \\
		& =  \frac{1}{(cq)_N} \sum_{k=1}^N \frac{\left(\frac{cq}{d}\right)_k (cq)_{N-k}(dq)^k}{(q)_k (1-q^k)(q)_{N-k} } \sum_{j=0}^{N-k} \frac{\left(\frac{d}{c}\right)_j q^j \left(q^{-(N-k)}\right)_j}{(q)_j \left(q^{-(N-k)}/c\right)_j},
	\end{align*}
	where in the last step, we used \eqref{eq:11} with $N$ replaced by $N-k$ and $n$ replaced by $j$. Now apply \eqref{q-Chu-Vandermonde} with $a = d/c$, $x= q^{-(N-k)}/c$ and $N$ replaced by $N-k$ to see that
	\begin{equation}\label{appl_Chu}
		\sum_{j=0}^{N-k} \frac{\left(\frac{d}{c}\right)_j q^j \left(q^{-(N-k)}\right)_j}{(q)_j \left(q^{-(N-k)}/c\right)_j}=\frac{(dq)_{N-k}}{(cq)_{N-k}}.
	\end{equation}
 From  \eqref{appl_Chu} and \eqref{s2s},
 \begin{align*}
T_2^{*}=\frac{1}{(cq)_N} \sum_{k=1}^N \frac{\left(\frac{cq}{d}\right)_k (dq)^k(dq)_{N-k}}{(q)_k (1-q^k)(q)_{N-k} },
 \end{align*}
which, along with \eqref{s1}, implies
	\begin{align}\label{final_second term}
		T_2= \frac{d}{(c-d)(q)_N}\left(1-\frac{(dq)_N}{(cq)_N}\right) +\frac{1}{(cq)_N} \sum_{k=1}^N \frac{\left(\frac{cq}{d}\right)_k (dq)^k(dq)_{N-k}}{(q)_k (1-q^k)(q)_{N-k} }.
	\end{align}
	Hence from \eqref{Putting_z=1}, \eqref{s1} and \eqref{final_second term},
	\begin{align*}
		& \sum_{n=1}^{N}\frac{(-1)^{n-1}\left(\frac{c}{d} \right)_{n}d^n  q^{\frac{n(n+1)}{2}}}{(q)_n (q)_{N-n} (cq)_{n}}  \left(n + \sum_{k=1}^n \frac{q^k}{1- q^k}  \right) \\ 
		&=\frac{c}{(c-d)(q)_N}\left(1-\frac{(dq)_N}{(cq)_N}\right) +\frac{1}{(cq)_N} \sum_{k=1}^N \frac{\left(\frac{cq}{d}\right)_k (dq)^k(dq)_{N-k}}{(q)_k (1-q^k)(q)_{N-k} }
	\end{align*}
Finally from the above equation and Lemma \ref{supheinelemma}, we arrive at \eqref{2phi1ideqn}.
\end{proof}

\section{Applications of Theorem \ref{2phi1id}}\label{app}
There are several applications of Theorem \ref{2phi1id}. We begin with the most appealing one generalizing  the generating function version of Andrews' famous identity 
\begin{equation} \label{idspt}
	\textup{spt}(n)=np(n)-\frac{1}{2}N_2(n).
\end{equation}

\subsection{A generalization of Andrews' identity}

\begin{theorem}\label{sptgenthm}
	We have
	\begin{align}\label{sptgen}
		\frac{1	}{(q)_{\infty}}\sum_{n=1}^{\infty}\frac{n(-d)^{n-1}(q/d)_{n-1}q^{n(n+1)/2}}{(q)_{n}^2}
		&=\frac{1}{(q)_{\infty}}\sum_{n=1}^{\infty}\frac{nq^n(dq)_{n-1}}{(q)_n}\nonumber\\
		&\quad-\frac{(dq)_{\infty}}{(q)_{\infty}}\sum_{j=1}^{\infty}\frac{q^{j^2}}{(q)_{j}^{2}}\sum_{n=1}^{j}\frac{q^n}{(1-dq^n)(1-q^n)}.
	\end{align}	
\end{theorem}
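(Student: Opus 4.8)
The plan is to specialize Theorem~\ref{2phi1id} at $c=1$ and then let $N\to\infty$. Setting $c=1$ in \eqref{2phi1ideqn} causes the two main simplifications we want: the factor $(c/d)_n = (1/d)_n$ appears on the left, and on the right-hand side the term $\frac{c}{(c-d)(q)_N}\left(1-\frac{(dq)_N}{(cq)_N}\right)$ becomes $\frac{1}{(1-d)(q)_N}\left(1-\frac{(dq)_N}{(q)_N}\right)$, while $(cq)_N=(q)_N$ throughout. I would first rewrite the finite identity cleanly with $c=1$, keeping all four pieces: the ``$n(-1)^{n-1}\cdots$'' sum, the ${}_2\phi_1$ sum, the boundary term, and the $\sum_{k=1}^N \frac{(q/d)_k(dq)_{N-k}(dq)^k}{(q)_k(q)_{N-k}(1-q^k)}$ term.

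Next I would take the limit $N\to\infty$ term by term. For the first sum, $(q)_{N-n}\to (q)_\infty$, so $\frac{1}{(q)_n(q)_{N-n}} \to \frac{1}{(q)_n(q)_\infty}$, and with $(1/d)_n d^n = (-1)^n d^{n}(1/d)_n$; one should be careful to match this against $(-d)^{n-1}(q/d)_{n-1}$ in the target — here I would use the elementary rewriting $(1/d)_n = (1-1/d)(q/d)_{n-1} = -\frac{1-d}{d}(q/d)_{n-1}$, which produces the factor $(q/d)_{n-1}$ and an extra $(1-d)$ that must be tracked. For the ${}_2\phi_1$ sum, as $N\to\infty$ we have $dq^{N+1}\to 0$, so ${}_2\phi_1\!\left[\begin{smallmatrix} dq, & dq^{N+1}; & q^k/d \\ & dq^{k+1} & \end{smallmatrix}\right] \to 1$ (the series collapses to its $m=0$ term), the prefactor $\frac{(1/d)_\infty (dq)_\infty}{(q)_N (q)_\infty (dq^{N+1})_\infty}$ tends to $\frac{(1/d)_\infty(dq)_\infty}{(q)_\infty^2}$ after dividing the whole identity appropriately, and $\left[\begin{smallmatrix}N\\k\end{smallmatrix}\right]\to \frac{1}{(q)_k}$. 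The boundary term: since $d$ is a formal/small parameter, $(dq)_N/(q)_N \to (dq)_\infty/(q)_\infty$, so $\frac{1}{(1-d)(q)_N}(1-(dq)_N/(q)_N)\to 0$ — this term vanishes in the limit, which is why it does not appear in \eqref{sptgen}. Finally the last sum: $(dq)_{N-k}/(q)_{N-k}\to (dq)_\infty/(q)_\infty$ (after extracting the tail) gives $\frac{(dq)_\infty}{(q)_\infty}\sum_{k\ge 1}\frac{(q/d)_k (dq)^k}{(q)_k(1-q^k)}$ — but this is not yet in the target form, so I would apply a further transformation.

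The remaining work is to massage the surviving right-hand side into exactly the shape of \eqref{sptgen}. The target's first right-hand term is $\frac{1}{(q)_\infty}\sum_{n\ge1}\frac{nq^n(dq)_{n-1}}{(q)_n}$ and the second is $-\frac{(dq)_\infty}{(q)_\infty}\sum_{j\ge1}\frac{q^{j^2}}{(q)_j^2}\sum_{n=1}^j \frac{q^n}{(1-dq^n)(1-q^n)}$. Comparing with what the limit produces, I expect the ${}_2\phi_1$-sum (which becomes $\frac{(1/d)_\infty(dq)_\infty}{(q)_\infty^2}\sum_{k\ge1}\frac{d^k q^{k(k+1)}}{(dq)_k(1-q^k)}\cdot\frac{1}{(q)_k}$) and the $\sum \frac{(q/d)_k(dq)^k}{(q)_k(1-q^k)}$ term to combine, via $q$-series identities (most likely the $q$-Gauss sum \eqref{q-Chu-Vandermonde} in its $N\to\infty$ form, or Heine's transformation \eqref{heine}), into the single nested sum $\sum_j \frac{q^{j^2}}{(q)_j^2}\sum_{n=1}^j\frac{q^n}{(1-dq^n)(1-q^n)}$; the key mechanism is that $\frac{(1/d)_\infty}{(dq)_k}$-type ratios telescope against $(q/d)_k$ to give $\frac{1}{(1-dq^n)}$ factors. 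The left side similarly needs the $(1-d)$ from $(1/d)_n = -\frac{1-d}{d}(q/d)_{n-1}$ to cancel against a $(1-d)$ coming out of the boundary manipulation or a common normalization. The main obstacle I anticipate is precisely this last bookkeeping: correctly combining the ${}_2\phi_1$ term and the final finite sum after the limit and recognizing the double-sum form — identifying which classical $q$-series transformation (Heine, $q$-Gauss, or a Bailey-type manipulation) converts $\sum_k \frac{(q/d)_k (dq)^k}{(q)_k(1-q^k)}$-shaped expressions into $\sum_j \frac{q^{j^2}}{(q)_j^2}\sum_{n\le j}(\cdots)$, since the single-versus-double sum restructuring is the nontrivial analytic content; everything else is routine limit-taking and algebra.
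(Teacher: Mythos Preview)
Your overall plan---set $c=1$ in Theorem~\ref{2phi1id}, let $N\to\infty$, then simplify---matches the paper's approach. But two of your limit computations are wrong, and they are exactly the steps that carry the nontrivial content.

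First, the ${}_2\phi_{1}$ does \emph{not} collapse to~$1$. As $N\to\infty$ the parameter $dq^{N+1}\to 0$, but $(0)_m=1$ for every $m\ge 0$, so
\[
{}_2\phi_{1}\!\left[\begin{matrix} dq,\ dq^{N+1}\\ dq^{k+1}\end{matrix};\,q,\ \frac{q^k}{d}\right]
\ \longrightarrow\
\sum_{j=0}^{\infty}\frac{(dq)_j}{(dq^{k+1})_j(q)_j}\left(\frac{q^k}{d}\right)^{j},
\]
a genuine infinite series, not~$1$. In the paper this surviving inner sum is precisely where the double-sum structure $\sum_{j}\frac{q^{j^2}}{(q)_j^{2}}\sum_{n\le j}(\cdots)$ comes from: one applies Jackson's transformation~\eqref{jackson} to it, reindexes to expose~$q^{(j+k)^2}$, swaps the order of summation, and then invokes the Guo--Zhang identity~\eqref{m0} to collapse the inner sum to $\sum_{n=1}^{j}\frac{q^n}{(1-dq^n)(1-q^n)}$. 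None of this is visible if you set the ${}_2\phi_{1}$ equal to~$1$.

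Second, the ``boundary term'' does not vanish either: $\frac{1}{(1-d)(q)_N}\bigl(1-(dq)_N/(q)_N\bigr)$ tends to the nonzero quantity $\frac{1}{(1-d)(q)_\infty}\bigl(1-(dq)_\infty/(q)_\infty\bigr)$. This term and the surviving $\sum_{k\ge1}\frac{(q/d)_k(dq)^k}{(q)_k(1-q^k)}$ term together form the right-hand side $F_d(q)$, which the paper then rewrites---using \cite[Corollary~2.4]{dixitmaji18} and Andrews' generalization of Uchimura's identity \eqref{andrewsgen}---as $\frac{1}{(q)_\infty}\sum_{n\ge1}\frac{nq^n(dq)_{n-1}}{(q)_n}$. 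So the first target term on the right of \eqref{sptgen} arises from these two pieces combined, not from either alone, and certainly not with the boundary piece discarded.

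In short, your bookkeeping after the limit is off in both places, and the transformations you would need (Jackson~\eqref{jackson}, the identity~\eqref{m0}, and Andrews--Uchimura~\eqref{andrewsgen}) are more specific than a generic Heine or $q$-Gauss move.
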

\begin{proof}
Let $c=1$ in Theorem \ref{2phi1id} and then divide both sides by $d-1$ to obtain after simplification
	\begin{align}\label{befmmain}
	&\sum_{n=1}^{N}\frac{n(-d)^{n-1}\left(\frac{q}{d} \right)_{n-1}q^{\frac{n(n+1)}{2}}}{(q)_n^2 (q)_{N-n}}  + \frac{(\frac{q}{d})_{\infty}(dq)_{\infty}}{d(q)_{N}(dq^{N+1})_{\infty}(q)_{\infty}} \sum_{k=1}^{N}\bigg[\begin{array}{c}N\\k\end{array}\bigg]\frac{d^kq^{k(k+1)} }{(dq)_k(1-q^k)}{_2}\phi_{1}\bigg(\begin{array}{ccc} dq,&dq^{N+1};\; &\frac{q^k}{d} \\dq^{k+1}& \end{array}\bigg)\nonumber\\
	&= \frac{1}{(1-d)^2(q)_N}\left(\frac{(dq)_N}{(q)_N}-1\right) +\frac{1}{(d-1)(q)_N} \sum_{k=1}^N \frac{\left(\frac{q}{d}\right)_k (dq)_{N-k}(dq)^k}{(q)_k (q)_{N-k}(1-q^k) }.
\end{align}
Letting $N\to\infty$ gives
	\begin{align}\label{mmain}
	&\frac{1}{(q)_{\infty}}\sum_{n=1}^{\infty}\frac{n(-d)^{n-1}\left(\frac{q}{d} \right)_{n-1}q^{\frac{n(n+1)}{2}}}{(q)_n^2}  + \frac{(\frac{q}{d})_{\infty}(dq)_{\infty}}{d(q)_{\infty}^{2}} \sum_{k=1}^{\infty}\frac{d^kq^{k(k+1)} }{(q)_{k}(dq)_k(1-q^k)}\sum_{j=1}^{\infty}\frac{(dq)_{j}(q^{k}/d)^{j}}{(dq^{k+1})_j(q)_j}\nonumber\\
	&= \frac{1}{(1-d)^2(q)_{\infty}}\left(\frac{(dq)_{\infty}}{(q)_{\infty}}-1\right) +\frac{(dq)_{\infty}}{(d-1)(q)_{\infty}^2} \sum_{k=1}^{\infty} \frac{\left(\frac{q}{d}\right)_k (dq)^k}{(q)_k (1-q^k) }.
\end{align}
Now using Jackson's transformation \cite[p.~526]{aar}
\begin{equation}\label{jackson}
	\sum_{n=0}^{\infty}\frac{(\alpha)_n(\beta)_n}{(\gamma)_n(q)_n}z^n=\frac{(\alpha z)_{\infty}}{(z)_{\infty}}\sum_{n=0}^{\infty}\frac{(\alpha)_n(\gamma/\beta)_n(-\beta z)^nq^{n(n-1)/2}}{(\gamma)_n(\alpha z)_n(q)_n}
\end{equation}
with $\alpha=dq, \gamma=dq^{k+1}, \beta=0$ and $z=q^k/d$, we see that the inner sum in \eqref{mmain} transforms as
\begin{align*}
	\sum_{j=0}^{\infty}\frac{(dq)_{j}(q^{k}/d)^{j}}{(dq^{k+1})_j(q)_j}=\frac{(q^{k+1})_{\infty}}{(q^{k}/d)_{\infty}}\sum_{j=0}^{\infty}\frac{(dq)_{j}q^{j^2+2jk}}{(dq^{k+1})_j(q^{k+1})_j(q)_j}.
	\end{align*}
 Thus, the second expression on the left-hand side of \eqref{mmain} can be written as
 \begin{align*}
 	&\frac{(\frac{q}{d})_{\infty}(dq)_{\infty}}{d(q)_{\infty}^{2}} \sum_{k=1}^{\infty}\frac{d^kq^{k(k+1)} }{(q)_{k}(dq)_k(1-q^k)}\sum_{j=0}^{\infty}\frac{(dq)_{j}(q^{k}/d)^{j}}{(dq^{k+1})_j(q)_j}\nonumber\\
 	&=	\frac{(\frac{q}{d})_{\infty}(dq)_{\infty}}{d(q)_{\infty}^{2}} \sum_{k=1}^{\infty}\frac{d^kq^{k(k+1)} }{(q)_{k}(dq)_k(1-q^k)}\frac{(q^{k+1})_{\infty}}{(q^{k}/d)_{\infty}}\sum_{j=0}^{\infty}\frac{(dq)_{j}q^{j^2+2jk}}{(dq^{k+1})_j(q^{k+1})_j(q)_j}\nonumber\\
 	&=\frac{(dq)_{\infty}}{d(q)_{\infty}}\sum_{k=1}^{\infty}\frac{(dq)^k(q/d)_{k-1}}{(q)_k(1-q^k)}\sum_{j=0}^{\infty}\frac{(dq)_{j}q^{(j+k)^2}}{(dq)_{j+k}(q)_{j+k}(q)_j}\nonumber\\
 		&=\frac{(dq)_{\infty}}{d(q)_{\infty}}\sum_{k=1}^{\infty}\frac{(dq)^k(q/d)_{k-1}}{(q)_k(1-q^k)}\sum_{j=k}^{\infty}\frac{(dq)_{j-k}q^{j^2}}{(dq)_{j}(q)_{j}(q)_{j-k}}\nonumber\\
 		&=\frac{(dq)_{\infty}}{d(q)_{\infty}}\sum_{j=1}^{\infty}\frac{q^{j^2}}{(dq)_j(q)_j}\sum_{k=1}^{j}\frac{(dq)^k(q/d)_{k-1}(dq)_{j-k}}{(q)_k(q)_{j-k}(1-q^k)}.
 \end{align*}
Next, using \eqref{m0} with $x=dq$ and simplifying, we find that
\begin{align}\label{m0app}
	\sum_{k=1}^{j}\frac{(dq)^k(q/d)_{k-1}(dq)_{j-k}}{(q)_k(q)_{j-k}(1-q^k)}=\frac{(dq)_j}{(1-1/d)(q)_j}\left(\sum_{k=1}^{j}\frac{dq^k}{1-dq^k}-\sum_{k=1}^{j}\frac{q^k}{1-q^k}\right).
\end{align}
Substituting \eqref{m0app} in the last expression in \eqref{m0bef}, the second expression on the left-hand side of \eqref{mmain} finally simplifies to
 \begin{align}\label{m0bef}
	\frac{(\frac{q}{d})_{\infty}(dq)_{\infty}}{d(q)_{\infty}^{2}} \sum_{k=1}^{\infty}\frac{d^kq^{k(k+1)} }{(q)_{k}(dq)_k(1-q^k)}\sum_{j=1}^{\infty}\frac{(dq)_{j}(q^{k}/d)^{j}}{(dq^{k+1})_j(q)_j}&=\frac{(dq)_{\infty}}{(d-1)(q)_{\infty}}\sum_{j=1}^{\infty}\frac{q^{j^2}}{(q)_j^{2}}\sum_{k=1}^{j}\left(\frac{dq^k}{1-dq^k}-\frac{q^k}{1-q^k}\right)\nonumber\\
	&=\frac{(dq)_{\infty}}{(q)_{\infty}}\sum_{j=1}^{\infty}\frac{q^{j^2}}{(q)_j^{2}}\sum_{k=1}^{j}\frac{q^k}{(1-dq^k)(1-q^k)}.
	\end{align}
Next, let $F_d(q)$ denote the expression on the right-hand side of \eqref{mmain}. Using the identity \cite[Corollary 2.4]{dixitmaji18}, namely
\begin{align*}
	\sum_{k=1}^{\infty}\frac{d^{k-1}(q/d)_{k-1} q^k}{(q)_k}=\frac{1}{1-d} \left(1- \frac{(q)_{\infty} }{(d q)_\infty }  \right),
\end{align*}
in the second step below, we see that
\begin{align}\label{m0last0}
	F_d(q)&=\frac{(dq)_{\infty}}{(d-1)(q)_{\infty}^{2}}\left\{\frac{1}{(1-d)}\left(\frac{(q)_{\infty}}{(dq)_{\infty}}-1\right)+\sum_{k=1}^{\infty} \frac{\left(\frac{q}{d}\right)_k (dq)^k}{(q)_k (1-q^k) }\right\}\nonumber\\
	&=\frac{(dq)_{\infty}}{(d-1)(q)_{\infty}^{2}}\left\{-\frac{1}{d}\sum_{k=1}^{\infty}\frac{(q/d)_{k-1} (dq)^k}{(q)_k}+\sum_{k=1}^{\infty} \frac{\left(\frac{q}{d}\right)_k (dq)^k}{(q)_k (1-q^k) }\right\}\nonumber\\
	&=\frac{(dq)_{\infty}}{(d-1)(q)_{\infty}^{2}}\sum_{k=1}^{\infty}\frac{(q/d)_{k-1} (dq)^k}{(q)_k}\left(-\frac{1}{d}+\frac{1-q^k/d}{1-q^k}\right)\nonumber\\
	&=\frac{(dq)_{\infty}}{d(q)_{\infty}^{2}}\sum_{k=1}^{\infty}\frac{(q/d)_{k-1} (dq)^k}{(q)_k(1-q^k)}\nonumber\\
	&=\frac{(dq)_{\infty}}{(q)_{\infty}^2}\sum_{n=1}^{\infty}\frac{q^n}{(1-dq^n)(1-q^n)},
\end{align} 
where in the last step we used the special case $z=1$ of the identity
\begin{align*}
	\sum_{n=1}^{\infty}   \frac{(q)_{n-1} z^nq^n }{ (1-d q^n) (zq)_n } &=z\sum_{n=1}^{\infty}\frac{(zq/d)_{n-1}}{(zq)_n}\frac{d^{n-1}q^n}{1-zq^{n}}
\end{align*}
which is valid for $|zq|<1$ and $|dq|<1$. 

For $|q|<1$ and $d\neq d^{-m}, m\in\mathbb{N}$, Andrews \cite[p.~159]{ms_problem} has generalized Uchimura's identity \cite[Theorem 2]{uchimura} as follows:
\begin{align}\label{andrewsgen}
\sum_{n=1}^{\infty}\frac{q^n}{(1-dq^n)(1-q^n)}=\sum_{n=1}^{\infty}\frac{nq^n(q^{n+1})_{\infty}}{(dq^n)_{\infty}}.
\end{align}
Hence substituting \eqref{andrewsgen} in \eqref{m0last0} leads to
\begin{align}\label{m0last}
	F_d(q)=\frac{1}{(q)_{\infty}}\sum_{n=1}^{\infty}\frac{nq^n(dq)_{n-1}}{(q)_n}.
	\end{align}
Substituting \eqref{m0bef} and \eqref{m0last} in \eqref{mmain} and rearranging, we arrive at \eqref{sptgen}.
\end{proof}

\begin{corollary}
The identity in \eqref{idspt} holds.
\end{corollary}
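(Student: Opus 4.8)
The plan is to recover \eqref{idspt} from Theorem \ref{sptgenthm} by specializing $d=1$ and then interpreting each of the three resulting series combinatorially. First I would set $d=1$ in \eqref{sptgen}. On the left-hand side, $(q/d)_{n-1}=(q)_{n-1}$ at $d=1$, so the numerator becomes $n(-1)^{n-1}(q)_{n-1}q^{n(n+1)/2}$; dividing by $(q)_n^2=(q)_{n-1}^2(1-q^n)$ and using van Hamme's identity (stated in the proof of Lemma \ref{supheinelemma}) in the form $\sum_{k=1}^n q^k/(1-q^k)=\sum_{k=1}^n\binom{n}{k}(-1)^{k-1}q^{k(k+1)/2}/(1-q^k)$ — actually, more directly, one recognizes $\frac{1}{(q)_\infty}\sum_{n\ge1}\frac{n(-1)^{n-1}q^{n(n+1)/2}}{(q)_n(1-q^n)}$ as a known generating function. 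The cleanest route is to note that at $d=1$ the left-hand side of \eqref{sptgen} is exactly Andrews' series $S(1,q)$-type object whose coefficients are $\mathrm{spt}(n)$; indeed Andrews' original identity expresses $\sum_{n\ge1}\mathrm{spt}(n)q^n$ precisely as $\frac{1}{(q)_\infty}\sum_{n\ge1}\frac{n(-1)^{n-1}q^{n(n+1)/2}}{(q)_n(1-q^n)}$ after simplification.

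Next I would identify the two terms on the right-hand side of \eqref{sptgen} at $d=1$. The first term becomes $\frac{1}{(q)_\infty}\sum_{n\ge1}\frac{nq^n(q)_{n-1}}{(q)_n}=\frac{1}{(q)_\infty}\sum_{n\ge1}\frac{nq^n}{1-q^n}$, and since $\sum_{n\ge1}\frac{nq^n}{1-q^n}=\sum_{n\ge1}\sigma(n)q^n$ while $\frac{1}{(q)_\infty}=\sum p(m)q^m$, a well-known identity (Euler's, via $\sum np(n)q^n=\frac{1}{(q)_\infty}\sum_{n\ge1}\frac{nq^n}{1-q^n}$) shows this term equals $\sum_{n\ge1}np(n)q^n$. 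The second term at $d=1$ is $\frac{(q)_\infty}{(q)_\infty}\sum_{j\ge1}\frac{q^{j^2}}{(q)_j^2}\sum_{n=1}^j\frac{q^n}{(1-q^n)^2}=\sum_{j\ge1}\frac{q^{j^2}}{(q)_j^2}\sum_{n=1}^j\frac{q^n}{(1-q^n)^2}$; I would recognize $\frac{q^{j^2}}{(q)_j^2}$ as the generating function for partitions whose Durfee square has side $j$, and the inner factor $\sum_{n=1}^j\frac{q^n}{(1-q^n)^2}$ as contributing a weight counting, across such partitions, a quantity that Andrews, Garvan and Liang's crank/rank analysis identifies with $\tfrac12 N_2(n)$. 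The identity $\sum_{n\ge1}\frac12 N_2(n)q^n=\sum_{j\ge1}\frac{q^{j^2}}{(q)_j^2}\sum_{n=1}^j\frac{q^n}{(1-q^n)^2}$ is exactly the representation of the second rank moment via the Durfee square decomposition that appears in the $d=1$ case of the Dixit–Maji work.

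Assembling these three identifications, \eqref{sptgen} at $d=1$ reads $\sum_{n\ge1}\mathrm{spt}(n)q^n = \sum_{n\ge1}np(n)q^n - \tfrac12\sum_{n\ge1}N_2(n)q^n$, and comparing coefficients of $q^n$ gives $\mathrm{spt}(n)=np(n)-\tfrac12 N_2(n)$, which is \eqref{idspt}. The main obstacle I anticipate is not the specialization itself — that is routine — but confirming the combinatorial identifications of the second and third series, namely that $\frac{1}{(q)_\infty}\sum_{n\ge1}\frac{n(-1)^{n-1}q^{n(n+1)/2}}{(q)_n(1-q^n)}$ is the generating function of $\mathrm{spt}(n)$ and that the Durfee-square sum equals $\tfrac12\sum N_2(n)q^n$; both are classical (the former is Andrews' original definition-level identity, the latter is the standard rank-moment/Durfee-square computation), so in the write-up I would either cite Andrews' paper and \cite{dixitmaji18} for these or reproduce the short generating-function manipulations. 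One should also double-check that taking $d\to1$ in \eqref{sptgen} is legitimate, i.e.\ that no spurious pole appears; since every term in \eqref{sptgen} is manifestly regular at $d=1$ (the problematic $1/(1-d)$ factors were already cleared in passing from \eqref{befmmain} to \eqref{sptgen}), the limit may simply be substituted.
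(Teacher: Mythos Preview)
Your proposal is correct and follows essentially the same approach as the paper: specialize $d=1$ in \eqref{sptgen}, then identify the resulting left-hand side as $\sum_{n\ge1}\textup{spt}(n)q^n$ (the paper cites \cite[Theorem 3.8]{agl13}), the first right-hand term as $\sum_{n\ge1}np(n)q^n$ (the paper cites \cite[Equation (3.3)]{andrews08}), and the second right-hand term as $\tfrac12\sum_{n\ge1}N_2(n)q^n$ (the paper cites \cite[Equation (7.14)]{dixitmaji18}). Your observation that the substitution $d=1$ is harmless because the $1/(1-d)$ factors were already cleared is also implicit in the paper's treatment.
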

\begin{proof}
	Let $d=1$ in \eqref{sptgen}. This gives
		\begin{align}\label{mmaind1}
		\frac{1}{(q)_{\infty}}\sum_{n=1}^{\infty}\frac{n(-1)^{n-1}q^{\frac{n(n+1)}{2}}}{(q)_n(1-q^n)}  &=\frac{1}{(q)_{\infty}}\sum_{n=1}^{\infty}\frac{q^n}{(1-q^n)^2}-\sum_{j=1}^{\infty}\frac{q^{j^2}}{(q)_{j}^{2}}\sum_{n=1}^{j}\frac{q^n}{(1-q^n)^2}.
	\end{align}
From \cite[Theorem 3.8]{agl13},
\begin{equation}\label{1}
	\sum_{n=1}^{\infty}\textup{spt}(n)q^n=	\frac{1}{(q)_{\infty}}\sum_{n=1}^{\infty}\frac{n(-1)^{n-1}q^{\frac{n(n+1)}{2}}}{(q)_n(1-q^n)} ,
\end{equation}
whereas, from \cite[Equation (3.3)]{andrews08},
\begin{align}\label{2}
	\sum_{n=1}^{\infty}np(n)q^n&=\frac{1}{(q)_{\infty}}\sum_{n=1}^{\infty}\frac{nq^n}{1-q^n}.
\end{align}
Moreover, from \cite[Equation (7.14)]{dixitmaji18},
\begin{align}\label{3}
	\sum_{j=1}^{\infty}\frac{q^{j^2}}{(q)_{j}^{2}}\sum_{n=1}^{j}\frac{q^n}{(1-q^n)^2}=\frac{1}{2}\left.\frac{d^2}{dz^2}\sum_{j=0}^{\infty}\frac{q^{j^2}}{(zq)_j(z^{-1}q)_j}\right|_{z=1}.
\end{align}
From \eqref{mmaind1}, \eqref{1}, \eqref{2} and \eqref{3}, we arrive at 
\begin{align*}
\sum_{n=1}^{\infty}\textup{spt}(n)q^n=\sum_{n=1}^{\infty}np(n)q^n-\frac{1}{2}\sum_{n=1}^{\infty}N_2(n)q^n,
\end{align*}
which establishes \eqref{idspt} upon comparing the coefficients of $q^n$ on both sides.
\end{proof}

\begin{corollary}
We have
\begin{align*}
\sum_{n=1}^{\infty}\frac{n(-q)_{n-1}q^{n(n+1)/2}}{(q)_n^{2}}=\sum_{n=1}^{\infty}\frac{nq^n(-q)_{n-1}}{(q)_n}-(-q)_{\infty}\sum_{j=1}^{\infty}\frac{q^{j^2}}{(q)_{j}^{2}}\sum_{n=1}^{j}\frac{q^n}{(1-q^{2n})}.
\end{align*}
\end{corollary}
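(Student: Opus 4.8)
The plan is to obtain this corollary as the special case $d=-1$ of Theorem~\ref{sptgenthm}. First I would substitute $d=-1$ directly into \eqref{sptgen}. On the left-hand side the factor $(-d)^{n-1}$ collapses to $1$ and the Pochhammer symbol $(q/d)_{n-1}$ becomes $(-q)_{n-1}$, so the left-hand side reduces to $\frac{1}{(q)_{\infty}}\sum_{n=1}^{\infty}\frac{n(-q)_{n-1}q^{n(n+1)/2}}{(q)_n^2}$. On the right-hand side, $(dq)_{n-1}$ and $(dq)_{\infty}$ become $(-q)_{n-1}$ and $(-q)_{\infty}$ respectively, while the denominator $(1-dq^n)(1-q^n)$ simplifies to $(1+q^n)(1-q^n)=1-q^{2n}$. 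Multiplying the resulting identity through by $(q)_{\infty}$ then produces precisely the asserted formula.

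The only point requiring attention is that $d=-1$ be an admissible value throughout the derivation of Theorem~\ref{sptgenthm}. The divisions by $d-1$, by $1-d$, and by $(1-d)^2$ that occur along the way are all harmless at $d=-1$ since $d-1=-2\neq 0$; likewise the hypothesis needed for Andrews' generalization \eqref{andrewsgen} of Uchimura's identity holds automatically here, as its forbidden values are of the form $q^{-m}$ and $q^{-m}=-1$ is impossible when $|q|<1$. Hence no genuine obstacle arises: the entire content of the corollary is already contained in Theorem~\ref{sptgenthm}, and the statement merely records the pleasant closed form produced by this choice of the free parameter $d$.
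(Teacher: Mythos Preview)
Your proposal is correct and follows exactly the paper's own proof: set $d=-1$ in \eqref{sptgen} and multiply through by $(q)_{\infty}$. Your additional remarks on the admissibility of $d=-1$ are a welcome clarification but go beyond what the paper records.
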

\begin{proof}
Let $d=-1$ in \eqref{sptgen} and multiply the resulting identity by $(q)_{\infty}$.
\end{proof}
\begin{remark}
We observe that $\displaystyle\sum_{n=1}^{\infty}\frac{nq^n(-q)_{n-1}}{(q)_n}$ is the generating function of the sum of largest parts (counted with multiplicty $1$) in those overpartitions of a positive integer whose largest part is always overlined. For example, there are seven overpartitions of $4$ whose largest part is always overlined, namely, $\overline{4}, \overline{3}+1, \overline{3}+\overline{1}, \overline{2}+\overline{2}, \overline{2}+\overline{1}+1, \overline{2}+1+1$ and $\overline{1}+\overline{1}+\overline{1}+\overline{1}$. Then the coefficient $q^4$ in the series expansion of $\displaystyle\sum_{n=1}^{\infty}\frac{nq^n(-q)_{n-1}}{(q)_n}$  is 4+3+3+2+2+2+1=17.
\end{remark}

\subsection{A generalization of an identity for the generating function of $N_{\textup{SC}}(n)$ }

Let $V$ be the set of vector partitions, that is, $V=\mathcal{D}\times\mathcal{P}\times\mathcal{P}$, where $\mathcal{P}$ denotes the set of unrestricted partitions and $\mathcal{D}$ denotes the set of partitions into distinct parts. Define $S$ to be the set of vector partitions given below:
\begin{equation*}
	S:=\{\vec{\pi}=(\pi_1, \pi_2, \pi_3)\in V: 1\leq s(\pi_1)<\infty\hspace{1mm}\text{and}\hspace{1mm}s(\pi_1)\leq\min(s(\pi_2), s(\pi_3))\}.
\end{equation*}
Let $\omega_1(\vec{\pi})=(-1)^{\#(\pi_1)-1}$. Let $\imath: S\to S$ be the involution map defined by
\begin{equation*}
	\imath(\vec{\pi})=\imath(\pi_1, \pi_2, \pi_3)=\imath(\pi_1, \pi_3, \pi_2).
\end{equation*}
The partitions $\vec{\pi}=(\pi_1, \pi_2, \pi_3)$ from the set $S$  are simply called $S$-partitions. Define an $S$-partition $\vec{\pi}=(\pi_1, \pi_2, \pi_3)$  to be self-conjugate $S$-partition if it is a fixed point of $\imath$, that is, if and only if $\pi_2=\pi_3$. Moreover, let $N_{\textup{SC}}(n)$ denote the number of self-conjugate $S$-partitions counted according to the weight $\omega_1$, that is,
\begin{equation*}
	N_{\textup{SC}}(n)=\sum_{\vec{\pi}\in S, |\vec{\pi}|=n \atop \imath(\vec{\pi})=\vec{\pi}}\omega_1(\vec{\pi}).
\end{equation*}
Andrews, Garvan and Liang \cite[Theorem 3.8, Equation (3.25)]{agl13} showed that
\begin{align*}
	\sum_{n=1}^{\infty}N_{\textup{SC}}(n)q^n=\frac{1}{(q)_{\infty}}\sum_{n=1}^{\infty}\frac{n(-1)^{n-1}q^{n(n+1)/2}}{(q)_n(1+q^n)}.
\end{align*}
In \cite[Corollary 2.12]{dixitmaji18}, the following result for the generating function of $N_{\textup{SC}}(n)$ was proved:
\begin{align}\label{nsceqn}
	&(q)_{\infty}\sum_{n=1}^{\infty}N_{\textup{SC}}(n)q^n+\frac{1}{2}\frac{(q)_{\infty}}{(-q)_{\infty}}\sum_{n=1}^{\infty}\frac{q^{\frac{n(n+1)}{2}}}{(1-q^n)(q)_n}\left(\frac{(-q)_n}{(q)_n}-1\right)\nonumber\\
	&=\frac{1}{4}-\frac{1}{4}\frac{(q)_{\infty}}{(-q)_{\infty}}+\frac{1}{2}\frac{(q)_{\infty}}{(-q)_{\infty}}\sum_{n=1}^{\infty}\frac{(-q)_n}{(q)_n}\frac{q^n}{1-q^n}.
\end{align}
In what follows, we generalize the above identity by means of an extra parameter $d$.
\begin{theorem}
We have
	\begin{align}\label{nscgen}
&\sum_{n=1}^{\infty}\frac{n(-1)^{n-1}\left(\frac{-1}{d}\right)_{n}d^nq^{n(n+1)/2}}{(q^2;q^2)_n}+\frac{(\frac{-1}{d})_{\infty}(dq)_{\infty}}{(-q)_{\infty}} \sum_{k=1}^{\infty}\frac{d^kq^{k(k+1)}}{(q)_k(dq)_k(1-q^k)}\sum_{n=0}^{\infty} \frac{(dq)_{n}}{(dq^{k+1} )_{n}(q)_{n}}\left(\frac{-q^{k}}{d} \right)^{n}\nonumber\\
&=\frac{1}{(1+d)}\left(1-\frac{(dq)_{\infty}}{(-q)_{\infty}}\right)+\frac{(dq)_{\infty}}{(-q)_{\infty}}\sum_{n=1}^{\infty}\frac{(-q/d)_n(dq)^n}{(q)_n(1-q^n)}.
	\end{align}
\end{theorem}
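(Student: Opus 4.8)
The plan is to specialize the four-parameter identity of Theorem~\ref{2phi1id}, equation~\eqref{2phi1ideqn}, at $c=-1$, in exact parallel with the proof of Theorem~\ref{sptgenthm}, where the choice $c=1$ was made. The value $c=-1$ is the one producing the factors $1+q^{n}$, equivalently $(q)_{n}(-q)_{n}=(q^{2};q^{2})_{n}$, which are the hallmark of the generating function of $N_{\textup{SC}}(n)$ occurring in \eqref{nscgen}.

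First I would put $c=-1$ throughout \eqref{2phi1ideqn} and tidy up. In the first sum on the left the denominator $(q)_{n}(q)_{N-n}(cq)_{n}$ becomes $(q)_{N-n}(q^{2};q^{2})_{n}$. The second term on the left retains its shape: it is a $k$-sum of a ${}_{2}\phi_{1}$ with argument $-q^{k}/d$, now with $(-q)_{\infty}$ and $(-q)_{N}$ in place of $(cq)_{\infty}$ and $(cq)_{N}$. On the right, the factor $c/(c-d)$ becomes $-1/(-1-d)=1/(1+d)$, so the first right-hand term reads $\frac{1}{(1+d)(q)_{N}}\bigl(1-\frac{(dq)_{N}}{(-q)_{N}}\bigr)$, while the last sum becomes $\frac{1}{(-q)_{N}}\sum_{k=1}^{N}\frac{(-q/d)_{k}(dq)_{N-k}(dq)^{k}}{(q)_{k}(q)_{N-k}(1-q^{k})}$.

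Next I would send $N\to\infty$. One uses the elementary limits $(q)_{N-k}\to(q)_{\infty}$, $(dq^{N+1})_{\infty}\to1$, and $\left[\begin{smallmatrix}N\\k\end{smallmatrix}\right]\big/(q)_{N}=1/\bigl((q)_{k}(q)_{N-k}\bigr)\to 1/\bigl((q)_{k}(q)_{\infty}\bigr)$, together with the only point needing a little care: the upper parameter $dq^{N+1}$ of the inner ${}_{2}\phi_{1}$ tends to $0$, so that ${}_{2}\phi_{1}\bigl(dq,dq^{N+1};dq^{k+1};q,-q^{k}/d\bigr)$ degenerates into $\sum_{n\geq0}\frac{(dq)_{n}}{(dq^{k+1})_{n}(q)_{n}}(-q^{k}/d)^{n}$, which is exactly the inner sum displayed in \eqref{nscgen}. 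Every term of the resulting identity then carries the common factor $1/(q)_{\infty}$; multiplying both sides by $(q)_{\infty}$ clears it. The first left-hand sum becomes $\sum_{n\geq1}\frac{n(-1)^{n-1}(-1/d)_{n}d^{n}q^{n(n+1)/2}}{(q^{2};q^{2})_{n}}$, the second becomes the stated double sum of ${}_{2}\phi_{1}$'s, and the right-hand side collapses to $\frac{1}{1+d}\bigl(1-\frac{(dq)_{\infty}}{(-q)_{\infty}}\bigr)+\frac{(dq)_{\infty}}{(-q)_{\infty}}\sum_{n\geq1}\frac{(-q/d)_{n}(dq)^{n}}{(q)_{n}(1-q^{n})}$, which is precisely \eqref{nscgen}.

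The main obstacle is analytic rather than algebraic: justifying that the limit $N\to\infty$ may be taken termwise inside the sum over $k$ and inside the series represented by the ${}_{2}\phi_{1}$. This rests on the convergence hypotheses already implicit in Theorem~\ref{2phi1id} and in the use of Heine's transformation in Lemma~\ref{supheinelemma}; under, say, $|dq|<1$ and $|q|<|d|$ (so that $|q^{k}/d|<1$ for every $k\geq1$) the doubly infinite sum converges absolutely, the factors $q^{k(k+1)}/(q)_{k}$ force rapid decay in $k$, and a dominated-convergence (Tannery) argument legitimises the interchange. With this in hand nothing further is required: \eqref{nscgen} is simply the $c=-1$, $N\to\infty$ specialization of \eqref{2phi1ideqn}, multiplied through by $(q)_{\infty}$.
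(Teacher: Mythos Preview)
Your proposal is correct and follows exactly the same route as the paper: the paper's proof is the single sentence ``Let $c=-1$ in Theorem~\ref{2phi1id} and then let $N\to\infty$,'' and you have spelled out precisely that specialization and limit, together with the final multiplication by $(q)_{\infty}$.
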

\begin{proof}
	Let $c=-1$ in Theorem \ref{2phi1id} and then let $N\to\infty$.
\end{proof}
\begin{corollary}
	Identity \eqref{nsceqn} holds.
\end{corollary}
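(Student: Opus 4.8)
The plan is to specialize $d=1$ in \eqref{nscgen} and match the outcome against \eqref{nsceqn}. First I would record the elementary relations $(-1;q)_n=2(-q;q)_{n-1}$ and $(q^2;q^2)_n=(q)_n(-q)_n$ for $n\ge1$, together with $(-1;q)_\infty=2(-q;q)_\infty$. With these, the opening sum on the left of \eqref{nscgen} at $d=1$ becomes
\begin{equation*}
\sum_{n=1}^{\infty}\frac{n(-1)^{n-1}(-1;q)_nq^{n(n+1)/2}}{(q^2;q^2)_n}
=2\sum_{n=1}^{\infty}\frac{n(-1)^{n-1}q^{n(n+1)/2}}{(q)_n(1+q^n)}
=2(q)_\infty\sum_{n=1}^{\infty}N_{\textup{SC}}(n)q^n,
\end{equation*}
the last step using the Andrews--Garvan--Liang formula for the generating function of $N_{\textup{SC}}(n)$ recalled above; the prefactor $(-1/d)_\infty(dq)_\infty/(-q)_\infty$ of the second sum collapses to $2(q)_\infty$; and the right side of \eqref{nscgen} at $d=1$ is exactly twice the right side of \eqref{nsceqn}. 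Dividing the specialized identity by $2$, we are reduced to proving
\begin{equation*}
(q)_\infty\sum_{k=1}^{\infty}\frac{q^{k(k+1)}}{(q)_k^{2}(1-q^k)}\sum_{m=0}^{\infty}\frac{(-q^k)^m}{(q^{k+1})_m}
=\frac12\,\frac{(q)_\infty}{(-q)_\infty}\sum_{n=1}^{\infty}\frac{q^{n(n+1)/2}}{(1-q^n)(q)_n}\left(\frac{(-q)_n}{(q)_n}-1\right),
\end{equation*}
which we call $(\star)$.

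To prove $(\star)$, I would first undo the Heine transformation used in Lemma \ref{supheinelemma}: its $N\to\infty$ limit with $c=-1$ and $d=1$ identifies the left side of $(\star)$ with $\sum_{m\ge1}\frac{(-1)^{m-1}q^{m(m+1)/2}}{(q)_m(1+q^m)}\sum_{k=1}^{m}\frac{q^k}{1-q^k}$ (equivalently, one may work directly from the ``pre-Heine'' identity displayed in the proof of Theorem \ref{2phi1id} with $c=-1$ and $d=1$, then $N\to\infty$). Next, apply van Hamme's identity to the inner sum $\sum_{k=1}^{m}\frac{q^k}{1-q^k}$, interchange the order of summation, and evaluate the resulting inner sum by the $q$-Chu--Vandermonde identity \eqref{q-Chu-Vandermonde} together with a geometric-series summation — this is the same chain of moves that powers the proof of Theorem \ref{2phi1id}, and it brings the left side of $(\star)$ into the form on its right. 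Since this computation coincides with the one carried out in the proof of \cite[Corollary 2.12]{dixitmaji18}, one may alternatively just invoke that result.

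I expect the verification of $(\star)$ to be the main obstacle. In the $c=1$ case, which gave Andrews' $\textup{spt}$ identity in Theorem \ref{sptgenthm}, the corresponding inner sum telescoped cleanly under \eqref{q-Chu-Vandermonde}; here the extra denominators $1+q^n$ coming from $c=-1$ block a direct hypergeometric evaluation, so one must first split them — either by the geometric expansion $1/(1+q^n)=\sum_{i\ge0}(-1)^iq^{ni}$ or by telescoping the products $(q^n;q)_\infty/(-q^n;q)_\infty$ — and then carefully track the alternating signs through the Chu--Vandermonde step. Once $(\star)$ is established, \eqref{nsceqn} follows at once, which is the content of the corollary.
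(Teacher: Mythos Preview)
Your approach is essentially the same as the paper's: both set $d=1$ in \eqref{nscgen}, perform the elementary simplifications you list, and reduce everything to the auxiliary identity you call $(\star)$. The paper does not re-prove $(\star)$ but simply quotes it as \cite[Equation~(7.26)]{dixitmaji18} (written there with the notation $F(0;q^k;-q^k)$ for the inner sum $\sum_{m\ge0}(-q^k)^m/(q^{k+1})_m$); your reference to the proof of \cite[Corollary~2.12]{dixitmaji18} points to the same place, since (7.26) is an intermediate step in that proof, though citing the specific equation is cleaner and avoids any appearance of circularity.
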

\begin{proof}
	Let $d=1$ in \eqref{nscgen}. This results in
	\begin{align}\label{aux0}
	&\sum_{n=1}^{\infty}\frac{n(-1)^{n-1}q^{n(n+1)/2}}{(q)_n(1+q^n)}+(q)_{\infty}\sum_{k=1}^{\infty}\frac{q^{k(k+1)}}{(q)_k^{2}(1-q^k)}F(0;q^k;-q^k)\nonumber\\
	&=\frac{1}{4}\left(1-\frac{(q)_{\infty}}{(-q)_{\infty}}\right)+\frac{1}{2}\frac{(q)_{\infty}}{(-q)_{\infty}}\sum_{n=1}^{\infty}\frac{(-q)_n}{(q)_n}\frac{q^n}{1-q^n}.
	\end{align}
But from \cite[Equation (7.26)]{dixitmaji18}, we have
\begin{align}\label{aux}
	(q)_{\infty}\sum_{k=1}^{\infty}\frac{q^{k(k+1)}}{(q)_k^{2}(1-q^k)}F(0;q^k;-q^k)=\frac{1}{2}\frac{(q)_{\infty}}{(-q)_{\infty}}\sum_{k=1}^{\infty}\frac{q^{k(k+1)/2}}{(q)_k(1-q^k)}\left(\frac{(-q)_k}{(q)_k}-1\right).
\end{align}
Substituting \eqref{aux} in \eqref{aux0}, we arrive at \eqref{nsceqn}.
\end{proof}
\begin{remark}
If we let $d=-1$ in \eqref{nscgen}, we simply obtain the \eqref{entry4} with $a=-1$.
\end{remark}
\subsection{Other special cases}
\begin{theorem}
We have
\begin{align}\label{c=0}
	\frac{1}{(dq)_{\infty}}\sum_{n=1}^{\infty}\frac{n(-1)^{n-1}d^nq^{n(n+1)/2}}{(q)_n}+\sum_{n=1}^{\infty}\frac{d^nq^{n(n+1)}}{(q)_n(dq)_n(1-q^n)}=\sum_{n=1}^{\infty}\frac{(dq)^n}{(q)_n(1-q^n)}.
\end{align}
\end{theorem}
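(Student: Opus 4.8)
The plan is to specialize Theorem \ref{2phi1id} at $c=0$ and then pass to the limit $N\to\infty$. First I would put $c=0$ in \eqref{2phi1ideqn}. Then $\left(\tfrac{c}{d}\right)_n$, $(cq)_n$, $\left(\tfrac{c}{d}\right)_\infty$ and $(cq)_\infty$ all become $1$; the ${}_2\phi_1$ on the left has base argument $\tfrac{cq^k}{d}=0$ and hence reduces to its $m=0$ term, namely $1$; the term $\tfrac{c}{(c-d)(q)_N}\bigl(1-\tfrac{(dq)_N}{(cq)_N}\bigr)$ on the right vanishes because of the explicit factor $c$ (here $d\neq 0$); and $\left(\tfrac{cq}{d}\right)_k\to 1$, $(cq)_N\to 1$ on the right. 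Using $(dq)_\infty=(dq)_N(dq^{N+1})_\infty$ to rewrite the prefactor $\tfrac{(c/d)_\infty (dq)_\infty}{(q)_N(cq)_\infty(dq^{N+1})_\infty}$ as $\tfrac{(dq)_N}{(q)_N}$, and then absorbing the $q$-binomial coefficient through $\tfrac{(dq)_N}{(q)_N}\left[\begin{smallmatrix}N\\k\end{smallmatrix}\right]=\tfrac{(dq)_N}{(q)_k(q)_{N-k}}$, one is led to the finite identity
$$\sum_{n=1}^{N}\frac{n(-1)^{n-1}d^nq^{n(n+1)/2}}{(q)_n(q)_{N-n}}+(dq)_N\sum_{k=1}^{N}\frac{d^kq^{k(k+1)}}{(q)_k(q)_{N-k}(dq)_k(1-q^k)}=\sum_{k=1}^{N}\frac{(dq)_{N-k}(dq)^k}{(q)_k(q)_{N-k}(1-q^k)}.$$

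Next I would let $N\to\infty$. Termwise one has $(q)_{N-n}\to(q)_\infty$, $(q)_{N-k}\to(q)_\infty$, $(dq)_{N-k}\to(dq)_\infty$ and $(dq)_N\to(dq)_\infty$; since $|q|<1$ the partial products are bounded away from $0$ and $\infty$ uniformly, so each of the three series has terms of size $O\!\bigl(n|d|^n|q|^{n(n+1)/2}\bigr)$ or $O\!\bigl(|dq|^n\bigr)$ uniformly in $N$, and dominated convergence permits interchanging the limit with the summations. This yields
$$\frac{1}{(q)_\infty}\sum_{n=1}^\infty\frac{n(-1)^{n-1}d^nq^{n(n+1)/2}}{(q)_n}+\frac{(dq)_\infty}{(q)_\infty}\sum_{n=1}^\infty\frac{d^nq^{n(n+1)}}{(q)_n(dq)_n(1-q^n)}=\frac{(dq)_\infty}{(q)_\infty}\sum_{n=1}^\infty\frac{(dq)^n}{(q)_n(1-q^n)},$$
and multiplying through by $(q)_\infty/(dq)_\infty$ gives exactly \eqref{c=0}.

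I do not expect a genuine obstacle here; the computation is a clean specialization. The one point meriting a word of care is that Theorem \ref{2phi1id} was obtained by differentiating \eqref{eq:13} in $z$ and evaluating at $z=1$, while \eqref{eq:13} itself degenerates to the tautology $\mathrm{LHS}=\mathrm{LHS}$ at $c=0$ (the pole $1/c$ on its right side is cancelled by the limiting behaviour of $\bigl(\tfrac{zdq}{c}\bigr)_{n-1}(cq)^n$). Nevertheless every quantity appearing in the \emph{statement} \eqref{2phi1ideqn} — in particular the ${}_2\phi_1$ and the infinite products $(c/d)_\infty$, $(cq)_\infty$ — is analytic in $c$ in a neighbourhood of $c=0$, so the identity persists there by continuity; equivalently, the observations in the first paragraph show directly that all $c\to 0$ limits in \eqref{2phi1ideqn} exist and equal the substituted values, which legitimizes the specialization.
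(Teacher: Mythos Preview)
Your proposal is correct and follows essentially the same route as the paper: set $c=0$ in Theorem~\ref{2phi1id} to obtain a finite identity (your version is the paper's divided through by $(q)_N$, since $\left[\begin{smallmatrix}N\\n\end{smallmatrix}\right]=\tfrac{(q)_N}{(q)_n(q)_{N-n}}$), and then let $N\to\infty$. You supply more detail than the paper does on the $c\to 0$ specialization and on the passage to the limit, but the argument is the same.
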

\begin{proof}
	Let $c=0$ in Theorem \ref{2phi1id} to obtain
	\begin{align*}
		\sum_{n=1}^{N}\left[\begin{array}{c}N\\n\end{array}\right]n(-1)^{n-1}d^nq^{n(n+1)/2}+(dq)_N\sum_{n=1}^{N}\left[\begin{array}{c}N\\n\end{array}\right]\frac{d^nq^{n(n+1)}}{(dq)_n(1-q^n)}=\sum_{n=1}^{N}\left[\begin{array}{c}N\\n\end{array}\right]\frac{(dq)^n(dq)_{N-n}}{1-q^n}.
	\end{align*}
Now let $N\to\infty$ in the above identity to arrive at \eqref{c=0}.
\end{proof}
\begin{corollary}
We have
\begin{align*}
	\frac{1}{(q)_{\infty}}\sum_{n=1}^{\infty}\frac{n(-1)^{n-1}q^{n(n+1)/2}}{(q)_n}+\sum_{n=1}^{\infty}\frac{q^{n(n+1)}}{(q)_n^{2}(1-q^n)}=\sum_{n=1}^{\infty}\frac{q^n}{(q)_n(1-q^n)}.
\end{align*}
\end{corollary}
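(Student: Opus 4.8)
The final statement is the Corollary asserting
\[
\frac{1}{(q)_{\infty}}\sum_{n=1}^{\infty}\frac{n(-1)^{n-1}q^{n(n+1)/2}}{(q)_n}+\sum_{n=1}^{\infty}\frac{q^{n(n+1)}}{(q)_n^{2}(1-q^n)}=\sum_{n=1}^{\infty}\frac{q^n}{(q)_n(1-q^n)}.
\]
The plan is simply to specialize the immediately preceding Theorem, equation \eqref{c=0}, which reads
\[
\frac{1}{(dq)_{\infty}}\sum_{n=1}^{\infty}\frac{n(-1)^{n-1}d^nq^{n(n+1)/2}}{(q)_n}+\sum_{n=1}^{\infty}\frac{d^nq^{n(n+1)}}{(q)_n(dq)_n(1-q^n)}=\sum_{n=1}^{\infty}\frac{(dq)^n}{(q)_n(1-q^n)},
\]
by setting $d=1$. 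First I would check that every term behaves well under this substitution: the prefactor $1/(dq)_{\infty}$ becomes $1/(q)_{\infty}$, the factor $d^n$ becomes $1$, and $(dq)_n$ becomes $(q)_n$, so the three sums collapse exactly to the three displayed in the corollary. There is no convergence subtlety since all series are absolutely convergent for $|q|<1$ and $d=1$ lies inside the region $|dq|<1$ where \eqref{c=0} was established (and in any case \eqref{c=0} is a formal identity in $q$-series).

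The only thing to verify with a little care is that $d=1$ is genuinely admissible, i.e.\ that \eqref{c=0} does not secretly require $d\neq 1$. Tracing back, \eqref{c=0} comes from letting $N\to\infty$ in the finite identity obtained by putting $c=0$ in Theorem \ref{2phi1id}, and that finite identity involves only polynomial-type factors in $d$ with no denominator vanishing at $d=1$; hence $d=1$ is fine. So the proof is a one-line specialization, and I expect no real obstacle — the ``hard work'' has already been done in deriving \eqref{c=0} from Theorem \ref{2phi1id}.

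Thus the proof reads: \emph{Set $d=1$ in \eqref{c=0}.}

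\begin{proof}
Let $d=1$ in \eqref{c=0}.
\end{proof}
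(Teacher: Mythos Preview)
Your proof is correct and matches the paper's own proof exactly: the paper also simply writes ``Let $d=1$ in \eqref{c=0}.''
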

\begin{proof}
	Let $d=1$ in \eqref{c=0}.
\end{proof}
\begin{remark}
	Comparing with \cite[Corollary 2.10]{dixitmaji18}, we readily see that
	\begin{equation*}
\frac{1}{(q)_{\infty}}\sum_{n=1}^{\infty}\frac{n(-1)^{n-1}q^{n(n+1)/2}}{(q)_n}=\sum_{n=1}^{\infty}\frac{q^n}{1-q^n}.
	\end{equation*}
\end{remark}
\begin{corollary}
We have
\begin{align*}
\frac{-1}{(-q)_{\infty}}\sum_{n=1}^{\infty}\frac{nq^{n(n+1)/2}}{(q)_n}+\sum_{n=1}^{\infty}\frac{(-1)^nq^{n(n+1)}}{(q^2;q^2)_n(1-q^n)}=\sum_{n=1}^{\infty}\frac{(-q)^n}{(q)_n(1-q^n)}.
\end{align*}
\end{corollary}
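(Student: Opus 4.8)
The final statement to prove is the last corollary:
\begin{align*}
\frac{-1}{(-q)_{\infty}}\sum_{n=1}^{\infty}\frac{nq^{n(n+1)/2}}{(q)_n}+\sum_{n=1}^{\infty}\frac{(-1)^nq^{n(n+1)}}{(q^2;q^2)_n(1-q^n)}=\sum_{n=1}^{\infty}\frac{(-q)^n}{(q)_n(1-q^n)}.
\end{align*}

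The plan is to derive this as the special case $d=-1$ of Theorem~\eqref{c=0}, mirroring how the preceding corollary obtained its statement by setting $d=1$. Setting $d=-1$ in \eqref{c=0}, the first term becomes $\frac{1}{(-q)_{\infty}}\sum_{n=1}^{\infty}\frac{n(-1)^{n-1}(-1)^nq^{n(n+1)/2}}{(q)_n}=\frac{-1}{(-q)_{\infty}}\sum_{n=1}^{\infty}\frac{nq^{n(n+1)/2}}{(q)_n}$, which matches the first term on the left. For the second term on the left of \eqref{c=0}, with $d=-1$ we get $\sum_{n=1}^{\infty}\frac{(-1)^nq^{n(n+1)}}{(q)_n(-q)_n(1-q^n)}$; the key simplification is $(q)_n(-q)_n=(q^2;q^2)_n$, which converts it into $\sum_{n=1}^{\infty}\frac{(-1)^nq^{n(n+1)}}{(q^2;q^2)_n(1-q^n)}$, exactly the second term on the left. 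Finally the right-hand side of \eqref{c=0} with $d=-1$ is $\sum_{n=1}^{\infty}\frac{(-q)^n}{(q)_n(1-q^n)}$, which is precisely the right-hand side of the claimed identity.

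There is essentially no obstacle here; the only thing to check carefully is that $d=-1$ is an admissible choice in \eqref{c=0}. Tracing back, \eqref{c=0} came from Theorem~\ref{2phi1id} by letting $c=0$ and then $N\to\infty$, and Theorem~\ref{2phi1id} in turn was obtained by differentiating \eqref{eq:13} in $z$ and setting $z=1$. The parameter $d$ enters only through factors like $(dq)_N$, $(dq)_n$, $(q/d)$, and $\frac{c}{c-d}$; after setting $c=0$ these potentially problematic denominators involving $d$ (such as $c-d$) have already cancelled, so $d=-1$ causes no division by zero in \eqref{c=0}, and all the series converge since $|q|<1$ forces $|dq|=|q|<1$. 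Hence the substitution is legitimate, and the identity follows immediately after the elementary rewriting $(q;q)_n(-q;q)_n=(q^2;q^2)_n$.

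Thus the proof is simply: \emph{Let $d=-1$ in \eqref{c=0} and use $(q)_n(-q)_n=(q^2;q^2)_n$.}

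\begin{proof}
Let $d=-1$ in \eqref{c=0}. The first term on the left-hand side becomes
\begin{equation*}
\frac{1}{(-q)_{\infty}}\sum_{n=1}^{\infty}\frac{n(-1)^{n-1}(-1)^nq^{n(n+1)/2}}{(q)_n}=\frac{-1}{(-q)_{\infty}}\sum_{n=1}^{\infty}\frac{nq^{n(n+1)/2}}{(q)_n}.
\end{equation*}
For the second term, using the elementary identity $(q;q)_n(-q;q)_n=(q^2;q^2)_n$, we have
\begin{equation*}
\sum_{n=1}^{\infty}\frac{(-1)^nq^{n(n+1)}}{(q)_n(-q)_n(1-q^n)}=\sum_{n=1}^{\infty}\frac{(-1)^nq^{n(n+1)}}{(q^2;q^2)_n(1-q^n)}.
\end{equation*}
Finally, the right-hand side of \eqref{c=0} becomes $\sum_{n=1}^{\infty}\frac{(-q)^n}{(q)_n(1-q^n)}$. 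Combining these gives the desired identity.
\end{proof}
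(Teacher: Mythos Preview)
Your proof is correct and follows exactly the same approach as the paper: both simply substitute $d=-1$ into \eqref{c=0}. You provide a bit more detail by explicitly noting the identity $(q)_n(-q)_n=(q^2;q^2)_n$, which the paper leaves implicit, but otherwise the arguments are identical.
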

\begin{proof}
	Let $d=-1$ in \eqref{c=0}.
\end{proof}
\begin{remark}
	Note that $\displaystyle\sum_{n=1}^{\infty}\frac{nq^{n(n+1)/2}}{(q)_n}$ is the generating function of the number of parts in all partitions of a positive integer into distinct parts. 
\end{remark}
\begin{theorem}
The following identity is valid:
	\begin{align}\label{d tends to zero}
		\sum_{n=0}^{\infty}\frac{nc^nq^{n^2}}{(q)_n(cq)_n}-\sum_{k=1}^{\infty}\frac{(-c)^kq^{\frac{k(k+1)}{2}}}{(q)_k(1-q^k)}\sum_{j=0}^{\infty}\frac{c^jq^{(j+k)^2}}{(cq)_{j+k}(q)_j}=\frac{1}{(cq)_{\infty}}-1-\frac{1}{(cq)_{\infty}}\sum_{k=1}^{\infty}\frac{(-c)^kq^{\frac{k(k+3)}{2}}}{(q)_k(1-q^k)}.
				\end{align}
\end{theorem}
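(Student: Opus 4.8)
The plan is to obtain \eqref{d tends to zero} as the limiting case $d\to 0$ of Theorem \ref{2phi1id}, in exactly the same spirit as the other applications in Section \ref{app} (where we specialized $c$). First I would set up the limit: in \eqref{2phi1ideqn} the roles of $c$ and $d$ are not symmetric, so I need to be a little careful. Taking $d\to 0$ in the left-hand side, the first sum $\sum_{n=1}^N n(-1)^{n-1}(c/d)_n d^n q^{n(n+1)/2}/((q)_n(q)_{N-n}(cq)_n)$ behaves via $\lim_{d\to 0}(c/d)_n d^n = (-1)^n c^n q^{n(n-1)/2}$ (the same computation used for \eqref{eq:13}), which turns $(-1)^{n-1}(c/d)_nd^n q^{n(n+1)/2}$ into $-c^n q^{n^2}$; so that term limits to $-\sum_{n=1}^N n c^n q^{n^2}/((q)_n(q)_{N-n}(cq)_n)$. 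For the ${}_2\phi_1$ term I would first expand the ${}_2\phi_1$ as a sum over $j$, giving a double sum over $k,j$, then carefully track the $d$-dependence: the prefactor $(c/d)_\infty/(dq^{N+1})_\infty$ together with $d^k$ and the $(dq)_j$-type factors inside must be combined before taking $d\to 0$, because individually $(c/d)_\infty$ blows up. The right-hand side of \eqref{2phi1ideqn} has $\frac{c}{(c-d)(q)_N}(1-(dq)_N/(cq)_N)$, which tends to $\frac{1}{(q)_N}(1-1/(cq)_N)\cdot(-1)\cdot$(sign bookkeeping) — more precisely to $\frac{c}{c(q)_N}\left(1-\frac{1}{(cq)_N}\right)$; and the last sum $\frac{1}{(cq)_N}\sum_k (cq/d)_k(dq)_{N-k}(dq)^k/((q)_k(q)_{N-k}(1-q^k))$ involves $(cq/d)_k (dq)^k \to (-1)^k (cq)^k q^{k(k-1)/2}\cdot$(from $d\to 0$ in $(cq/d)_k d^k$) together with $q^k$ surviving, giving $(-c)^k q^{k(k+1)/2}$ in the numerator.

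The key steps, in order, are: (i) rewrite \eqref{2phi1ideqn} with the ${}_2\phi_1$ written out as $\sum_{j\ge 0}\frac{(dq)_j(dq^{N+1})_j}{(dq^{k+1})_j(q)_j}(cq^k/d)^j$, so everything is an explicit (finite or absolutely convergent) $q$-series; (ii) identify, for each of the four groups of terms, the precise power of $d$ that must be factored out and the finite limit of the remaining factor, using repeatedly $\lim_{d\to 0}(A/d)_m d^m = (-A)^m q^{m(m-1)/2}$ and $\lim_{d\to 0}(dx)_m = 1$; (iii) perform these limits termwise, justified since all sums over $n$, $k$ are finite ($1\le n,k\le N$) and, after $N\to\infty$, by uniform convergence on the relevant disc $|cq|<1$; (iv) take $N\to\infty$ and simplify. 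After the limits the left side should read $-\sum_{n\ge 1}\frac{nc^nq^{n^2}}{(q)_n^2}\cdot\frac{1}{?}$— here I would use that $\lim_{N\to\infty}\frac{1}{(q)_N(q)_{N-n}}$-type normalizations produce the $1/((q)_n(cq)_n)$ denominators shown in \eqref{d tends to zero}; in fact the cleaner route is to keep $(cq)_n$ in the denominator throughout (it has no $d$) and only let $d\to 0$, so the first term directly becomes $\sum_{n\ge1}\frac{nc^nq^{n^2}}{(q)_n(cq)_n}$ after moving it to the other side, matching the statement. The $k,j$ double sum, after $\lim_{d\to 0}$, becomes $\sum_{k\ge1}\frac{(-c)^kq^{k(k+1)/2}}{(q)_k(1-q^k)}\sum_{j\ge0}\frac{c^jq^{(j+k)^2}}{(cq)_{j+k}(q)_j}$, exactly the second left-hand term; and the right side collapses to $\frac{1}{(cq)_\infty}-1-\frac{1}{(cq)_\infty}\sum_{k\ge1}\frac{(-c)^kq^{k(k+3)/2}}{(q)_k(1-q^k)}$ once one notes $q^{k(k+1)/2}\cdot q^k = q^{k(k+3)/2}$ coming from the extra $(dq)^k$ factor contributing a $q^k$.

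The main obstacle I anticipate is the bookkeeping in step (ii) for the ${}_2\phi_1$ term: one must confirm that the apparently singular prefactor $(\frac{c}{d})_\infty/(dq^{N+1})_\infty$ combines with $d^k$ and with the $j$-sum to a genuinely finite, nonzero limit, and that no spurious factor of $q^{N+1}$ or sign is dropped when passing from $(cq^k/d)^j$ to its $d\to 0$ form (note $(cq^k/d)^j$ alone blows up, so it must be paired with a $(dq)_j$ or a $(d\cdots)_j$ factor — here the pairing is with $(dq)_j(dq^{N+1})_j/(dq^{k+1})_j$, whose $d\to 0$ limit is $1$, leaving $(cq^k/d)^j$ unpaired unless an additional $d^{?}$ is extracted from the $(\frac{c}{d})_\infty$ prefactor). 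Resolving this cleanly probably requires first taking $N\to\infty$ in \eqref{2phi1ideqn} and simultaneously $d\to0$, or equivalently doing the $d$-limit at finite $N$ very carefully using \eqref{ef}-type rewritings of $(\frac{c}{d})_\infty$ relative to $(dq^{N+1})_\infty$. Once that single limit is correctly executed, everything else is routine re-indexing ($j\mapsto j+k$ telescoping of $q$-powers exactly as in the proof of Theorem \ref{sptgenthm}) and cancellation. A secondary check: verifying the $d\to 0$ limit of $\frac{c}{(c-d)(q)_N}(1-(dq)_N/(cq)_N)$ produces $\frac{1}{(q)_N}\left(1-\frac{1}{(cq)_N}\right)$, which after $N\to\infty$ gives the $\frac{1}{(cq)_\infty}-1$ on the right of \eqref{d tends to zero}.
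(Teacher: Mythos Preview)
Your overall strategy---specialize Theorem \ref{2phi1id} by letting $N\to\infty$ and $d\to 0$---is exactly what the paper does, and your computations for the first sum on the left and for both terms on the right are correct. You also correctly pinpoint the genuine obstacle: in the ${}_2\phi_1$ term, the prefactor $(c/d)_\infty$ and the argument $(cq^k/d)^j$ both diverge as $d\to 0$, and the factors $(dq)_j(dq^{N+1})_j/(dq^{k+1})_j$ all tend to $1$, so nothing in the raw expansion absorbs the blow-up.

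Where your proposal falls short is in resolving that obstacle. Your two suggested fixes---``first take $N\to\infty$'' and ``\eqref{ef}-type rewritings of $(c/d)_\infty$ relative to $(dq^{N+1})_\infty$''---do not work: taking $N\to\infty$ alone leaves the singular combination intact, and \eqref{ef} concerns finite Pochhammer symbols $(q^{-N}/x)_n$, not the infinite product $(c/d)_\infty$. The paper's key step, which you are missing, is to apply Jackson's transformation \eqref{jackson} (with $\alpha=dq$, $\gamma=dq^{k+1}$, $\beta=0$, $z=cq^k/d$) to the inner sum \emph{before} letting $d\to 0$. This produces
\[
\sum_{j\ge 0}\frac{(dq)_j(cq^k/d)^j}{(dq^{k+1})_j(q)_j}
=\frac{(cq^{k+1})_\infty}{(cq^k/d)_\infty}\sum_{j\ge 0}\frac{(dq)_j\,c^j q^{j^2+2jk}}{(dq^{k+1})_j(cq^{k+1})_j(q)_j},
\]
and the factor $1/(cq^k/d)_\infty$ exactly cancels the divergent tail of $(c/d)_\infty=(c/d)_k(cq^k/d)_\infty$ in the prefactor, leaving only $(c/d)_k d^k\to(-c)^kq^{k(k-1)/2}$. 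After this cancellation the $d\to 0$ limit is termwise finite and the $j\mapsto j+k$ re-indexing you mention (which is indeed the same manipulation as in Theorem \ref{sptgenthm}) then yields the double sum in \eqref{d tends to zero}. So the ``routine re-indexing as in Theorem \ref{sptgenthm}'' is not a post-limit cleanup step; the Jackson transformation embedded in that argument is precisely the device that makes the limit exist.
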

\begin{proof}
Let $N\to\infty$ in Theorem \ref{2phi1id} thereby obtaining
\begin{align}\label{interm0}
	&\frac{1}{(q)_\infty}\sum_{n=1}^{\infty}\frac{n(-1)^{n-1}\left(\frac{c}{d} \right)_{n}d^n  q^{\frac{n(n+1)}{2}}}{(q)_n(cq)_{n}}  + \frac{(\frac{c}{d})_{\infty}(dq)_{\infty}}{(q)_{\infty}(cq)_{\infty}} \sum_{k=1}^{\infty}\frac{d^kq^{k(k+1)} }{(dq)_k(q)_k(1-q^k)}\sum_{j=0}^{\infty}\frac{(dq)_j(cq^k/d)^j}{(dq^{k+1})_j(q)_j}\nonumber\\
	&= \frac{c}{(c-d)(q)_\infty}\left(1-\frac{(dq)_\infty}{(cq)_\infty}\right) +\frac{(dq)_\infty}{(cq)_\infty(q)_\infty} \sum_{k=1}^\infty \frac{\left(\frac{cq}{d}\right)_k (dq)^k}{(q)_k (1-q^k) }.
\end{align}
We now want to let $d\to0$. However, before we do that, we need to transform the double sum on the left-hand side into a suitable form. To that end, we employ Jackson's transformation \eqref{jackson} with $\alpha=dq, \gamma=dq^{k+1}, \beta=0$ and $z=cq^k/d$ so that
\begin{align}\label{interm}
	\sum_{j=0}^{\infty}\frac{(dq)_{j}(cq^{k}/d)^{j}}{(dq^{k+1})_j(q)_j}=\frac{(cq^{k+1})_{\infty}}{(cq^{k}/d)_{\infty}}\sum_{j=0}^{\infty}\frac{(dq)_{j}q^{j^2+2jk}c^j}{(dq^{k+1})_j(cq^{k+1})_j(q)_j}.
\end{align}
Substituting \eqref{interm} in the double sum on the left-hand side of \eqref{interm0} and simplifying as in \eqref{m0bef}, we see that
\begin{align}\label{interm1}
	\frac{(\frac{c}{d})_{\infty}(dq)_{\infty}}{(q)_{\infty}(cq)_{\infty}} \sum_{k=1}^{\infty}\frac{d^kq^{k(k+1)} }{(dq)_k(q)_k(1-q^k)}\sum_{j=0}^{\infty}\frac{(dq)_j(cq^k/d)^j}{(dq^{k+1})_j(q)_j}=\frac{(dq)_{\infty}}{(q)_{\infty}}\sum_{k=1}^{\infty}\frac{(c/d)_k(dq)^k}{(q)_k(1-q^k)}\sum_{j=0}^{\infty}\frac{(dq)_jc^jq^{(j+k)^2}}{(dq)_{j+k}(cq)_{j+k}(q)_j}.
\end{align}
Substitute \eqref{interm1} in \eqref{interm0} and then let $d\to0$ to finally deduce \eqref{d tends to zero}.
\end{proof}
\begin{remark}
The identity in \eqref{interm0} is equivalent to \cite[Equation (4.5)]{bem}.
\end{remark}

\section{A finite analogue of the $\textup{ospt}$-function of Andrews, Chan and Kim}

In \cite[p. 78]{andrewschankim}, Andrews, Chan and Kim considered the odd moments of rank and crank, namely, 
\begin{equation*}
	\overline{N}_j(n):=\sum_{k=1}^{\infty}k^jN(k, n)\hspace{3mm}\text{and}\hspace{3mm}	\overline{M}_j(n):=\sum_{k=1}^{\infty}k^jM(k, n),
\end{equation*}	
where $N(k, n)$ (resp. $M(k, n)$) denote the number of partitions of $n$ with rank (resp. crank) $k$.

They further defined
\begin{align*}
	C_1(q)&:=\sum_{n=1}^{\infty}\overline{M}_1(n)q^n,\\
	R_1(q)&:=\sum_{n=1}^{\infty}\overline{N}_1(n)q^n,
\end{align*}
and showed that \cite[Theorems 1, 2]{andrewschankim}
\begin{align}
	C_1(q)&=\frac{1}{(q)_{\infty}}\sum_{n=1}^{\infty}\frac{(-1)^{n+1}q^{n(n+1)/2}}{1-q^n}=\sum_{k=0}^{\infty}\frac{kq^{k^2}}{(q)_k^{2}},\label{c1q}\\
	R_1(q)&=\frac{1}{(q)_{\infty}}\sum_{n=1}^{\infty}\frac{(-1)^{n+1}q^{n(3n+1)/2}}{1-q^n}.\label{r1q}
\end{align}
Their work culminated into an important inequality, namely, $\overline{M}_1(n)>\overline{N}_1(n)$ for all positive integers $n$, which, in fact, led them to consider the odd spt-function $\textup{ospt}(n)$ defined by
\begin{equation*}
\textup{ospt}(n)=\overline{M}_1(n)-\overline{N}_1(n).
\end{equation*}  
They then interpreted it combinatorially in terms of even and odd strings in the partitions of $n$. See \cite[p.~80]{andrewschankim} for the definitions. Moreover, they extended all these results for higher values of $k>1$ as well. 

In this section, we are concerned with the finite analogues of the first odd moments of rank and crank.

We first note that the finite analogues of  ranks and cranks, and of $N(k, n), M(k, n)$ as well as of the rank and crank moments were considered in \cite[p.~8-10]{dems} .  The finite analogues of $N(k, n)$ and $M(k, n)$, respectively, $N_{S_{1}}(k, n)$ and $M_{S_{1}}(k, n)$, satisfy 
\begin{equation*}
	N_{S_{1}}(-k, n)=N_{S_{1}}(k, n)\hspace{1mm}\text{and}\hspace{1mm} 	M_{S_{1}}(-k, n)=M_{S_{1}}(k, n),
\end{equation*}
because of which one needs to consider the following modified finite rank and crank moments for getting nontrivial odd moments:
 \begin{equation*}
 	\overline{N}_j(n, N):=\sum_{k=1}^{\infty}k^jN_{S_{1}}(k, n)\hspace{3mm}\text{and}\hspace{3mm}	\overline{M}_j(n, N):=\sum_{k=1}^{\infty}k^jM_{S_{1}}(k, n).
 \end{equation*}	
Their generating functions can then be defined to be
\begin{equation*}
	R_{j}(q, N):=\sum_{n=1}^{\infty}\overline{N}_j(n, N)q^n\hspace{3mm}\text{and}\hspace{3mm}C_{j}(q, N):=\sum_{n=1}^{\infty}\overline{M}_j(n, N)q^n.
\end{equation*}
 Here we are concerned with $R_{1}(q, N)$ and $C_{1}(q, N)$. Note that from \cite[p.~252, Theorem 4.1]{andpar},
 \begin{align*}
 	\frac{(q)_{N}}{(zq)_{N}(z^{-1}q)_{N}}=\frac{1}{(q)_N}+(1-z)\sum_{n=1}^{N}\left[\begin{matrix} N \\ n \end{matrix}\right]\frac{(-1)^n(q)_nq^{n(n+1)/2}}{(q)_{n+N}}\left(\frac{1}{1-zq^n}-\frac{1}{z-q^n}\right),
 \end{align*}
where the left-hand side is the finite analogue of the crank generating function.

 Applying the differential operator $z\frac{\partial}{\partial z}$ on both sides, we get
  \begin{align*}
 z\frac{\partial}{\partial z}		\frac{(q)_{N}}{(zq)_{N}(z^{-1}q)_{N}}=z\sum_{n=1}^{N}\left[\begin{matrix} N \\ n \end{matrix}\right]\frac{(-1)^{n+1}(q)_nq^{n(n+1)/2}}{(q)_{n+N}}\left(\frac{1-q^n}{(1-zq^n)^2}-\frac{1-q^n}{(z-q^n)^2}\right).
\end{align*}
Now since only the first sum on the right-hand side contributes to positive powers of $z$ when expanding the right-hand side as a Laurent series in $z$, we deduce that
\begin{align}\label{c1qnexpr}
	C_1(q, N)&=\lim_{z\to1}z\sum_{n=1}^{N}\left[\begin{matrix} N \\ n \end{matrix}\right]\frac{(-1)^{n+1}(q)_nq^{n(n+1)/2}}{(q)_{n+N}}\frac{1-q^n}{(1-zq^n)^2}\nonumber\\
	&=\sum_{n=1}^{N}\left[\begin{matrix} N \\ n \end{matrix}\right]\frac{(-1)^{n+1}(q)_nq^{n(n+1)/2}}{(q)_{n+N}(1-q^n)}.
\end{align} 
It is easy to see that letting $N\to\infty$ in the above identity leads to the first equality in \eqref{c1q}.

We now proceed towards obtaining a finite analogue of the first odd rank moment.Moreover, from \cite[p.~252, Theorem 2.1]{andpar}, we have
\begin{align*}
	\sum_{n=0}^{N}\left[\begin{matrix} N \\ n \end{matrix}\right]\frac{(q)_nq^{n^2}}{(zq)_n(z^{-1}q)_n}=\frac{1}{(q)_N}+(1-z)\sum_{n=1}^{N}\left[\begin{matrix} N \\ n \end{matrix}\right]\frac{(-1)^n(q)_nq^{n(3n+1)/2}}{(q)_{n+N}}\left(\frac{1}{1-zq^n}-\frac{1}{z-q^n}\right),
\end{align*}
where the left-hand side is a finite analogue of the rank generating function as given in \cite[Theorem 2.2]{dems}. Hence proceeding similarly as in the derivation of \eqref{c1qnexpr}, we arrive at
 \begin{align}\label{r1qnexpr}
	R_1(q, N)
	=\sum_{n=1}^{N}\left[\begin{array}{c}N\\n\end{array}\right]\frac{(-1)^{n+1}(q)_nq^{n(3n+1)/2}}{(q)_{n+N}(1-q^n)},
\end{align}
which indeed gives us \eqref{r1q} upon letting $N\to\infty$.

Next, from \eqref{c1qnexpr} and \eqref{r1qnexpr}, 
\begin{align}\label{difference}
	C_1(q, N)-R_1(q, N)=\sum_{n=1}^{N}\left[\begin{array}{c}N\\n\end{array}\right]\frac{(-1)^{n+1}(q)_nq^{n(n+1)/2}(1-q^{n^2})}{(q)_{n+N}(1-q^n)}.
\end{align}
The coefficients of the right-hand side always appear to be positive. It would be interesting to try to prove this. 
\section{Future directions}\label{cr}
Theorem \ref{sptgenthm} is a generalization of the generating function version of Andrews' famous identity for $\text{spt}(n)$ in that it involves an extra parameter $d$. It would be interesting to interpret the coefficients of $d^mq^n$ in the power-series expansions of each of the expressions in \eqref{sptgen}. We have not been able to do it as of now.

Note that one of the first steps in the proof of \eqref{sptgen} was to let $N\to\infty$ in \eqref{befmmain} to obtain \eqref{mmain}. So if one succeeds in extracting the arithmetic and combinatorial information embedded in \eqref{sptgen}, a similar thing could possibly be done beginning with the more general \eqref{befmmain}.

There are many further identities that could be derived from Theorem \ref{2phi1id} or its special cases considered in Section \ref{app}. For example, letting $d=q$ in Theorem \ref{sptgen} gives
\begin{align}
\sum_{j=1}^{\infty}\frac{q^{j^2}}{(q)_{j}^{2}}\sum_{n=1}^{j}\frac{q^n}{(1-q^{n+1})(1-q^n)}=\frac{q^2}{(1-q)^2(q)_{\infty}}.
\end{align}
Here, we have concentrated only on special cases of Theorem \ref{2phi1id} for $c$ or $d$ equal to $0,\pm1$, and that too only when $N\to\infty$!

Lastly, determining if all of the coefficients in the power series expansion of the right-hand side of \eqref{difference} are positive seems to be worthwhile to study in view of the fruitful consequences that ensue if it is true, for example, the inequality between $\overline{M}_1(n, N)$ and $\overline{M}_1(n, N)$ for all $n\in\mathbb{N}$ and hence the existence of the finite analogue of the $\textup{ospt}(n)$ function.



\end{document}